\newtheoremstyle{myremark} 
    {7pt}                    
    {7pt}                    
    {}  	                 
    {}                           
    {\bf}       	         
    {.}                          
    {.5em}                       
    {}  
\theoremstyle{plain}
\newtheorem{lemma}{Lemma}[section]
\newtheorem{corollary}[lemma]{Corollary}
\newtheorem{proposition}[lemma]{Proposition}
\newtheorem{theorem}[lemma]{Theorem}
\newtheorem*{theorem-main}{Theorem~\ref{thm:main}}
\newtheorem*{theorem-secondary}{Theorem~\ref{thm:secondary}}
\theoremstyle{definition}
\newtheorem{definition}[lemma]{Definition}
\theoremstyle{myremark}
\newtheorem{remark}[lemma]{Remark}
\newtheorem{example}[lemma]{Example}
\newcommand{\p}{{\mathfrak{p}}}
\newcommand{\m}{{\mathfrak{m}}}
\newcommand{\q}{{\mathfrak{q}}}
\newcommand{\I}{{\mathcal{I}}}
\newcommand{\sheight}{{^*\mathrm{ht}}}
\newcommand{\height}{{\mathrm{ht}}}
\newcommand{\sdim}{{\phantom{}^*\mathrm{dim}}}
\newcommand{\xra}{\xrightarrow}
\newcommand{\grmod}{\mathfrak{grmod}}
\newcommand{\vdim}{\mathrm{vdim}}
\begin{document}

\title{Length and Multiplicities in Graded Commutative Algebra}
\author{Mark Blumstein}
\date{\today}

\maketitle

\setcounter{tocdepth}{1}
\tableofcontents
\section{Introduction}

This paper is a synthesis of the main ideas from the author's matster's thesis. The author would like to thank Jeanne Duflot for her steady guidance and dedication as advisor. This paper came to be via the study of the commutative algebra of equivariant cohomology rings $H^*_G(X)$ associated to a group $G$ acting on a topological space $X$, which are of course naturally graded. This study was really begun about 50 years ago by Quillen \cite{Quillen1}, \cite{Quillen2} who described the (Krull) dimension of these graded rings and gave a decomposition of their spectra (in the sense of algebraic geometry).  Many authors have followed with their own studies of these rings from the point of view of commutative algebra; recent contributions include the work of Symonds \cite{Symonds}, Lynn \cite{Lynn} and Duflot \cite {Duflot3}.

We were attempting to generalize the work of Lynn \cite{Lynn}, resulting in the paper \cite{BD}, and found that we needed a careful exploration of various notions of multiplicity for graded rings which were nonstandard in two ways:   not  positively graded (for example, they might be graded localizations) and/or not generated by elements of degree 1.  Since, as algebraic topologists, the degree of homogeneous elements in these rings can have geometric or representation-theoretic meaning to us, we were not comfortable with using the geometer's trick of the Veronese embedding to get around the second problem.

In this paper, there is nary a word about equivariant cohomology or algebraic topology.  It is all about graded commutative algebra and much of it is expository.  The main results of interest to us are the theorems about length, multiplicity and a second notion of ``degree" (which is really another sort of multiplicity) for the Poincar\`{e} series of a graded ring.  When we were able to write results for rings from a larger collection than simply those of cohomology type, we tried to do so. We hope that workers in fields other than algebraic topology find this exposition useful.

\section{A Review of Standard Definitions and Facts in the Graded Category}
\label{Introductory Results in the Graded Context}
We consider only strictly commutative $\mathbb{Z}$-graded rings and modules in this paper and use the standard notation:  if $A$ is a graded ring, $M$ is a graded $A$-module and $n \in \mathbb{Z}$, $M_n$ is the set of homogeneous elements of degree $n$ (although ``degree" will also have another meaning here);  for every $x \in M$, $x$ may be written uniquely as $x= \Sigma_{n \in \mathbb{Z}}x_n$, where $x_n \in M_n$ and $x_j = 0$ for all but finitely many $j$, the element $x_n$ is   the homogeneous component of $x$ of degree $n$.  An element $x \in M$ is a homogeneous element if and only if $x$ has at most one nonzero homogeneous component.  If $d \in \mathbb{Z}$, we use the following convention for the suspended $A$-module $M(d)$:  $$M(d)_j \doteq M_{d+j},$$ for every $j \in \mathbb{Z}$; also, if $M$ and $N$ are graded $A$-modules, and $\psi: M \to N$ is an $A$-module homomorphism, then $\psi$ is a \textit{graded homomorphism of degree $d$} if for every integer $n$, $\psi(M_n) \subseteq N_{n+d}$.

\begin{definition} Suppose $A$ is a graded ring. The category $\grmod (A)$ has objects finitely generated graded $A$-modules. The morphisms of $\grmod(A)$ are the $A$-module homomorphisms which are graded of degree zero (i.e. degree-preserving).

\end{definition}

Recall that a submodule $N$ of a graded $A$-module $M$ is a graded submodule if and only if it is generated over $A$ by homogeneous elements; this is equivalent to the condition that for every element of $N$, all of its homogeneous components are in $N$.

Whether or not $M$ and $A$ are graded, the set of associated primes for $M$ in $A$ is denoted by $Ass_A(M)$ and the support of an $A$-module $M$ is the set $Supp_A(M) \doteq \{ \p \in Spec(A) : M_\p \neq 0 \}$.  For $M$ finitely generated over $A$, $\p \in Supp_A(M)$ if and only if $Ann_A(M) \subseteq \p$. A prime ideal of $A$ that contains $Ann_A(M)$, and is minimal amongst all primes containing $Ann_A(M)$ is called a \textit{minimal prime} for $M$.  If $M= A/\I$ for an ideal $\I$ of $A$, then a minimal prime for $M$ is called a minimal prime in $A$ over $\I$.  Note that $Ass_A(M) \subseteq Supp_A(M)$.

\begin{definition}The graded support of $M$, $*Supp_A(M)$, is the set of all graded prime ideals in the support of $M$.  If $\I$ is a graded ideal in $A$, the graded variety of $\I$, $*V(\I)$, is the set of all graded primes in $A$ containing $\I$.  (Recall that if $\mathcal{J}$ is any ideal in $A$, graded or not, $V(\mathcal{J})$ is the set of prime ideals in $A$ containing $\mathcal{J}$.)  \end{definition}

We collect some standard results about $Ann_A(M)$ and $Ass_A(M)$ for the graded category below.

\begin{proposition}\label{lemma Ass is graded}
Let $A$ be a graded ring with $M$ a graded $A$-module.
\begin{itemize}
\item[i)]  $Ann_A(M)$ is a graded ideal in $A$ and $Ann_A(M) = Ann_A(M(d))$ for every $d \in \mathbb{Z}$.
\item[ii)] If $\p \in Ass_A(M)$, then $\p$ is a graded ideal of $A$ and  is the annihilator of a homogeneous element in $A$.
\item[iii)]  Therefore, if $\I$ is a graded ideal in $A$, all primes in $Ass_A(A/\I)$ are graded.
\item[iv)]  If $\p$ is a minimal prime for $M$, then $\p \in Ass_A(M)$; thus, all minimal primes for $M$ are graded.
\end{itemize}
\label{lemma minimal prime}
\end{proposition}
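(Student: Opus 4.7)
For (i), I would verify directly from the definition. If $a \in Ann_A(M)$ has homogeneous decomposition $a = \sum_n a_n$, then for any homogeneous $m \in M_k$ we have $am = \sum_n a_n m = 0$ with $a_n m \in M_{n+k}$ lying in distinct degrees, forcing each $a_n m = 0$; since $M$ is generated as an abelian group by its homogeneous elements, each $a_n$ annihilates $M$. The equality $Ann_A(M) = Ann_A(M(d))$ is immediate because $M$ and $M(d)$ share the same underlying $A$-module structure.

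Part (ii) is the crux, and also the main obstacle. I would let $\p = Ann_A(x) \in Ass_A(M)$ and choose $x$ so as to minimize the number $r$ of nonzero homogeneous components in its decomposition $x = x_{u_1} + \cdots + x_{u_r}$ (with $u_1 < \cdots < u_r$) over all elements of $M$ whose annihilator equals $\p$. To prove $\p$ is graded, I would take arbitrary $a \in \p$ with decomposition $a = a_{v_1} + \cdots + a_{v_t}$ and examine the top-degree component of the relation $ax = 0$; this single term $a_{v_t} x_{u_r}$ must vanish, and hence so does $a_{v_t}^N x_{u_r}$ for every $N \geq 1$. Consequently $a_{v_t}^N x = \sum_{i < r} a_{v_t}^N x_{u_i}$ has strictly fewer than $r$ nonzero homogeneous components. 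If we had $a_{v_t} \notin \p$, then $a_{v_t}^N \notin \p$ by primality, so $a_{v_t}^N x \neq 0$ and $Ann_A(a_{v_t}^N x) = (\p : a_{v_t}^N) = \p$, contradicting the minimality of $r$. Thus $a_{v_t} \in \p$, and iterating on $a - a_{v_t} \in \p$ shows every homogeneous component of $a$ lies in $\p$, proving that $\p$ is graded. For the homogeneous-witness claim in (ii), I would now exploit gradedness of $\p$: each homogeneous $c \in \p$ satisfies $0 = cx = \sum_i c x_{u_i}$, a sum of elements in distinct degrees, forcing $c x_{u_i} = 0$ for every $i$, so $\p \subseteq \bigcap_i Ann_A(x_{u_i})$. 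The reverse inclusion is automatic since any element of the intersection kills $x$. Hence $\p$ equals this finite intersection of ideals, and primality of $\p$ forces $\p = Ann_A(x_{u_j})$ for some $j$, yielding the required homogeneous witness.

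Part (iii) is an immediate specialization of (ii) to the graded module $A/\I$. For (iv), I would invoke the standard Noetherian fact that a minimal prime in the support of $M$ lies in $Ass_A(M)$, and apply (ii); alternatively, one obtains (iv) directly from (i) together with the elementary fact that any prime minimal over a graded ideal $\J$ is graded, proved by noting that the ideal $\p^*$ generated by the homogeneous elements of a prime $\p$ is itself prime and satisfies $\J \subseteq \p^* \subseteq \p$, so minimality of $\p$ over $\J$ forces $\p^* = \p$.
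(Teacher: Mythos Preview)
Your proof is correct. The paper itself does not supply a proof of this proposition; it is presented as a collection of standard facts (the preceding sentence reads ``We collect some standard results about $Ann_A(M)$ and $Ass_A(M)$ for the graded category below''), so there is no argument in the paper to compare against. Your argument for (ii) is the standard one found in, e.g., Bruns--Herzog or Matsumura: minimize the number of homogeneous components of a witness for $\p$, peel off the top-degree piece of $a \in \p$ using primality, and then use gradedness of $\p$ to express $\p$ as a finite intersection of annihilators of homogeneous elements. One small remark: the phrase ``homogeneous element in $A$'' in the paper's statement of (ii) is evidently a typo for ``homogeneous element in $M$'', which is how you correctly read it. For (iv), your first route does implicitly assume $A$ Noetherian (as does the claim that minimal primes are associated), while your alternative via $\p^*$ gives the gradedness conclusion without that hypothesis; both are fine in the paper's context, since Noetherian hypotheses are imposed shortly thereafter.
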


Finally, for an ideal $\I$ in a graded ring,  graded or not,  $\I^*$ is defined as the largest, graded ideal contained in $\I$; i.e. $\I^*$ is the ideal generated by all homogeneous elements of $\I$; if $\p$ is a prime ideal in $A$, $\p^*$ is also a prime ideal in $A$.  

\subsection{Noetherian graded rings}
When we say that  a graded $A$-module $M$ is a Noetherian $A$-module, we mean that it is Noetherian in the usual sense, forgetting the grading. 

One can show \cite{brhe} that the following conditions on $A$ are equivalent:
\begin{itemize}
\item $A$ is Noetherian.
\item Every graded ideal in $A$ is  generated by a finite set of homogeneous elements.
\item $A_0$ is Noetherian and $A$ is a finitely generated $A_0$-algebra by a set of homogeneous elements.
\end{itemize}

So, if $M$ is a finitely generated graded $A$-module, and $A$ is Noetherian, then $M$ is Noetherian, and
\begin{itemize}
\item every $A$-submodule $N$ of $M$ is finitely generated over $A$, and if $N$ is graded, it is generated over $A$ by a finite set of homogeneous elements;
\item for every $j$, $M_j$ is a Noetherian $A_0$-module and so every $A_0$-submodule of $M_j$ is finitely generated:  if one has an ascending chain $X_1 \subseteq X_2 \subseteq \cdots $ of $A_0$-submodules of $M_j$, then letting $AX_i$ be the (graded) $A$-submodule generated by $X_i$, we must have $AX_i = AX_{i+1}$ for all $i$ greater than or equal to  some fixed $N$.  By degree considerations, $X_i =  AX_i \cap M_j$ for every $i$, so $X_i = X_{i+1}$ for $i \geq N$.
\end{itemize}

\begin{lemma} If $A$ is a graded Noetherian ring and $M \in \grmod(A)$, 
\begin{itemize}
\item[a.] $ Supp_A(M) = V(Ann_A(M)) \doteq V(M)$, so that $*Supp_A(M) = *V(Ann_A(M)) \doteq *V(M).$
\item[b.]  If $\I$ is a graded ideal in $A$, $*V(M/\I M) = *V(M) \cap *V(\I) = *V(Ann_A(M) + \I)$.
\item[c.]  If $\I$ and $\tilde{\I}$ are two graded ideals in $A$ then their radicals are also graded, and  $*V(\I) = *V(\tilde{\I})$ if and only if $\sqrt{\I} = \sqrt{\tilde{\I}}$.
\end{itemize}
\end{lemma}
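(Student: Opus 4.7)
For part (a), the plan is to first prove the ungraded equality $Supp_A(M) = V(Ann_A(M))$; the graded version then follows by intersecting both sides with the set of graded primes of $A$. The inclusion $Supp_A(M) \subseteq V(Ann_A(M))$ is immediate: if $\p \not\supseteq Ann_A(M)$, pick $a \in Ann_A(M) \setminus \p$, and multiplication by $a$ kills every element of $M_\p$. For the reverse inclusion I would use finite generation: write $M = Am_1 + \cdots + Am_k$; if $M_\p = 0$ then each $m_i$ is annihilated by some $s_i \notin \p$, and the product $s_1 \cdots s_k$ lies in $Ann_A(M) \setminus \p$, contradicting $\p \in V(Ann_A(M))$.

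For part (b), the second equality $*V(M) \cap *V(\I) = *V(Ann_A(M) + \I)$ is formal, since a graded prime contains both $Ann_A(M)$ and $\I$ if and only if it contains their sum. For the first equality I would apply part (a) to $M/\I M$, reducing to the claim that $Supp_A(M/\I M) = Supp_A(M) \cap V(\I)$. The $\subseteq$ direction is easy: if $(M/\I M)_\p \neq 0$ then $M_\p \neq 0$, and if some $a \in \I$ were a unit in $A_\p$ we would have $\I M_\p = M_\p$, so $\I \subseteq \p$. The $\supseteq$ direction uses Nakayama's lemma: $M_\p$ is finitely generated and $\I \subseteq \p A_\p$ forces $\I M_\p \subsetneq M_\p$ whenever $M_\p \neq 0$. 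Intersecting with $*Spec(A)$ then produces the stated identity.

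For part (c), the crux is that the radical of a graded ideal is graded. Given $f \in \sqrt{\I}$ with homogeneous decomposition $f = f_{d_1} + \cdots + f_{d_k}$, $d_1 < \cdots < d_k$, all $f_{d_i} \neq 0$, some power $f^n$ lies in $\I$, and its top homogeneous component is $f_{d_k}^n$, which therefore lies in $\I$ since $\I$ is graded. Hence $f_{d_k} \in \sqrt{\I}$, so $f - f_{d_k} \in \sqrt{\I}$ has strictly fewer nonzero components, and induction on $k$ finishes the claim. With this in hand, $\sqrt{\I} = \bigcap_{\p \text{ minimal over } \I} \p$, and Proposition~\ref{lemma Ass is graded}(iv) applied to $M = A/\I$ shows every such $\p$ is graded and hence lies in $*V(\I)$. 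Thus $\sqrt{\I}$ is determined by $*V(\I)$, yielding the equivalence $\sqrt{\I} = \sqrt{\tilde{\I}}\iff *V(\I) = *V(\tilde{\I})$.

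The main obstacle is the graded-radical step in (c); because the paper works with $\Z$-gradings rather than $\N$-gradings, one must use that each element has only finitely many nonzero homogeneous components to make the ``top degree'' component well-defined. Once that lemma is secured, parts (a) and (b) reduce to routine facts about finitely generated modules over Noetherian rings, with the grading entering only at the final step of intersecting with $*Spec(A)$.
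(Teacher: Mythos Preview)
Your argument is correct and matches the paper's approach for (a) and (b): the paper simply cites \cite{se} for the ungraded identities $Supp_A(M)=V(Ann_A(M))$ and $V(M/\I M)=V(M)\cap V(\I)=V(Ann_A(M)+\I)$, which are exactly the facts you spell out, and then intersects with the graded primes.

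For (c) there is a small difference worth noting. You prove ``radical of a graded ideal is graded'' directly via the top-homogeneous-component trick, and then invoke the minimal-prime description of $\sqrt{\I}$ for the equivalence. The paper instead derives \emph{both} conclusions from the single observation that all minimal primes over $\I$ are graded (Proposition~\ref{lemma minimal prime}(iv)): since $\sqrt{\I}$ is the intersection of these finitely many graded primes, it is graded, and since these primes are exactly the minimal elements of $*V(\I)$, the set $*V(\I)$ determines $\sqrt{\I}$. Your route is slightly more hands-on but equally valid; the paper's is a bit more economical since it avoids the inductive component-stripping argument.
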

\begin{proof}  The proof of $a.$ can be found in \cite{se}; also \cite{se} tells us that $V(M/\I M) = V(M) \cap V(\I)= V(Ann_A(M) + \I)$ and so $b.$ follows from this.  For c., the forward implication follows since all minimal primes over $\I$ are graded, thus occur as minimal elements both in $V(\I)$ and $*V(\I)$, and $\sqrt{\I}$ is the intersection of the (finite number of) minimal primes over $\I$.  
\end{proof}

The type of filtration described in the following lemma will be used several times in this paper; and we provide a brief discussion of its proof.

\begin{lemma}
\label{corollary the filtration graded}
If $A$ is a Noetherian graded ring and  $M \in \grmod(A)$ is nonzero, there exists a finite filtration $M^{\bullet}$ of $M$ by graded submodules ($M^0 = 0; M^{n} = M$), integers $d_j$ and graded primes $\p_j \in Spec(A)$ with  graded isomorphisms of graded $A$-modules, $M^{i+1}/M^{i} \cong A / \p_{i+1} (-d_{i+1})$.  Furthermore, given a finite list of  graded primes $(\p_j \mid 1 \leq j \leq n )$ in $Spec(A)$ (not necessarily distinct), and a graded filtration $M^{\bullet}$ of $M$ by graded submodules as above, we must have $$Ass_A(M) \subseteq \{\p_j \mid 1 \leq j \leq n \} \subseteq *Supp_A(M)$$ and these three sets must have the same minimal elements, the set of which consists of the minimal primes of $M$.   Finally, if $\p$ is a minimal prime for $M$, forgetting all gradings and using the fact that  the ordinary localization $M_{\p}$ is a finitely generated Artinian $A_{\p}$-module, the number of times that $A/\p$, possibly shifted, occurs as a graded $A$-module isomorphic to a subquotient of $M^{\bullet}$ is always equal to the length of $M_{\p}$ as an $A_{\p}$-module and is thus independent of the choice of the graded filtration $M^{\bullet}$.
\end{lemma}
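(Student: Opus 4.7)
The plan is to prove the three distinct claims in turn: existence of the filtration, the chain of containments with coincident minimal elements, and the intrinsic length formula at a minimal prime. For existence, I would run the standard prime filtration construction inside the graded category. Since $M \neq 0$, Proposition~\ref{lemma Ass is graded}(ii) supplies a graded prime $\p_1 \in Ass_A(M)$ which is the annihilator of some homogeneous $x_1 \in M_{d_1}$. Multiplication by $x_1$ sends $A_{j-d_1}$ into $M_j$, so it defines an injective graded degree-zero map $(A/\p_1)(-d_1) \hookrightarrow M$; set $M^1 = Ax_1$. If $M^1 \neq M$, I would repeat on $M/M^1$, producing $M^2/M^1 \cong (A/\p_2)(-d_2)$, and iterate. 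The chain terminates because $A$ is Noetherian and $M$ is finitely generated, so the ascending chain $M^0 \subsetneq M^1 \subsetneq \cdots$ of graded submodules stabilizes.

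For the containments, given any such filtration I would iterate the standard inclusion $Ass_A(N') \subseteq Ass_A(N) \cup Ass_A(N'/N)$ along $M^i \subset M^{i+1}$, using that $Ass_A((A/\p)(-d)) = \{\p\}$, to conclude $Ass_A(M) \subseteq \{\p_j\}$. For $\{\p_j\} \subseteq *Supp_A(M)$, the subquotient $(A/\p_{j+1})(-d_{j+1})$ embeds into $M/M^j$, and $\p_{j+1}$ lies in its support, hence in $Supp_A(M)$; since all $\p_j$ are graded, they lie in $*Supp_A(M)$. For the minimal elements I would invoke the elementary fact that if $X \subseteq Y \subseteq Z$ and every minimal element of $Z$ belongs to $X$, then the minimal elements of the three sets coincide; Proposition~\ref{lemma Ass is graded}(iv) supplies this hypothesis since every minimal prime of $M$ already sits inside $Ass_A(M)$, and minimal primes of $M$ are by definition the minimal elements of $Supp_A(M)$.

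For the final independence statement, I would localize the filtration at the minimal prime $\p$ of $M$ and forget gradings. Ordinary localization is exact and does not see the shifts, so $(M^{i+1}/M^i)_\p \cong (A/\p_{i+1})_\p$, which is zero unless $\p_{i+1} \subseteq \p$. Any such $\p_{i+1}$ lies in $Supp_A(M)$ by the previous step, so minimality of $\p$ forces $\p_{i+1} = \p$, and then $(A/\p)_\p = \kappa(\p)$ is a simple $A_\p$-module of length one. Summing across the filtration yields $\ell_{A_\p}(M_\p) = \#\{i : \p_{i+1} = \p\}$, which is intrinsic to $(M,\p)$ and hence independent of the choice of filtration. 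I expect the only genuine subtleties to be bookkeeping in the graded category: verifying that multiplication by a homogeneous element of degree $d$ produces a graded degree-zero map from the $(-d)$-shifted source, and confirming that ordinary localization collapses all shifts so the subquotients reduce cleanly to $\kappa(\p)$ or zero as needed.
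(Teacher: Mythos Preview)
Your proof is correct and follows the same approach as the paper: the existence argument is essentially identical (pick a graded associated prime via Proposition~\ref{lemma Ass is graded}, embed a shifted $A/\p$, pass to the quotient, and terminate by the Noetherian condition). For the containments and the length formula the paper simply cites Serre's \textit{Local Algebra}, whereas you supply the standard arguments explicitly; those arguments are the expected ones and are correct.
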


\begin{proof} We remind the reader of the proof of the first statement, adapted to the graded case:
Using the Noetherian hypothesis, since $M \neq 0$, $Ass_A(M) \neq \emptyset$, so we may pick an element $\p_1 \in Ass_A(M)$.  Then $\p_1$ is graded and there exists a homogeneous element $m_ 1\in M$ such that $\p_1 = ann_A(m_1)$. Suppose $\deg(m_1) = d_1$, then $A/\p_1(-d_1)$ is graded isomorphic to a graded $A$-submodule of $M$ which we call $M^1$.  

If $M^1 = M$, we are done.  If not, we take the $A$-module $M/M^1$, notice that it is nonzero, and produce an associated prime $\p_2 \in Ass_A(M/M^1)$.  Since $M/M^1$ is a graded $A$-module  $\p_2$ is also graded.  Suppose $\p_2 = Ann_A(\overline{m}_2)$ where $m_2 \notin M^1$ is a homogeneous element in $M$ and $\deg(m_2) = d_2$; $\overline{m}_2$ is the coset of $m_2$ in $M/M^1$.  Thus there is a graded submodule $M^1 \subseteq M^2$ such that $M^2/M^1$ is graded isomorphic to $A/\p_2(-d_2)$.  At some point there must be a smallest $n\geq 1$ with $M^n = M$, since otherwise the Noetherian hypothesis would be violated. 

For the last two statements, we refer to \cite{se}.  
\end{proof}

\section{Graded Length}

We've already started using the notation ``$^*P$" for a modification of  a property or definition $P$ in the ungraded category to obtain a property or definition in the graded category, and we continue it in this section.  From now on, unless stated otherwise, all modules and rings are graded, although we will sometimes redundantly restate this.
\begin{definition}  If $A$ is a graded ring, a graded ideal $\mathcal{N}$ is *maximal if and only if $\mathcal{N} \neq A$ and $\mathcal{N}$ is a maximal element in the set of all proper graded ideals of $A$.  
\end{definition}

\begin{definition}
A *simple $A$-module is a nonzero graded $A$-module with no nonzero proper graded submodules.  A *composition series for a graded module $M \in \grmod(A)$ is a chain of graded $A$-submodules of $M$, $0=M^0 \subset \cdots \subset M^n = M$ such that each successive quotient $M^i/ M^{i-1}$ is isomorphic as a graded $A$-module to a *simple module. The *length of the *composition series $0=M^0 \subset \cdots \subset M^n = M$ is defined to be $n$.
\end{definition}

The fundamental theorem about *composition series mirrors that in the ungraded case.  The proof of the following is nearly identical to the ungraded case (\cite{Eis},Theorem 2.13), with only minor adjustments made to account for the grading, and we leave this effort to the reader.  
\begin{theorem} 
Suppose for $M \in \grmod(A)$ that a *composition series of length $n$ for $M$ exists.  Then, every chain of graded submodules of $M$ has length $\leq n$, and can be refined to a *composition series of length $n$.  Every *composition series for $M$ has length $n$.  \end{theorem}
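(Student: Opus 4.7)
The plan is to mimic the classical Jordan--Hölder argument by induction on the length $n$ of the hypothesized *composition series, keeping track of the grading at each step. The only feature of simple modules that is used in the ungraded proof is that any submodule is either zero or the whole module; in the graded setting, the definition of *simple guarantees exactly this for \emph{graded} submodules, which is all we need since everything (submodules, intersections, sums, quotients, kernels, images) in $\grmod(A)$ stays graded. In particular, a degree-preserving homomorphism between *simple modules is either zero or an isomorphism, because its kernel and image are graded submodules of the source and target.

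The inductive step is the heart of the proof. Fix a *composition series $0 = M^0 \subset M^1 \subset \cdots \subset M^n = M$ and let $0 = N^0 \subsetneq N^1 \subsetneq \cdots \subsetneq N^k = M$ be an arbitrary strict chain of graded submodules. I would intersect the latter with $M^{n-1}$ to produce the chain
\[
N^0 \cap M^{n-1} \subseteq N^1 \cap M^{n-1} \subseteq \cdots \subseteq N^k \cap M^{n-1}
\]
of graded submodules of $M^{n-1}$. The key claim is that at most one consecutive pair in this chain can coincide. Indeed, since $M/M^{n-1}$ is *simple, the graded submodule $(N^i + M^{n-1})/M^{n-1} \cong N^i/(N^i \cap M^{n-1})$ is either $0$ or all of $M/M^{n-1}$. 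A short modular-law argument then shows: as long as $N^i \subseteq M^{n-1}$ the intersections strictly increase with $i$; once some $N^{i_0} \not\subseteq M^{n-1}$, all subsequent $N^i/(N^i \cap M^{n-1})$ equal $M/M^{n-1}$, and then two consecutive equal intersections would force $N^i = N^{i+1}$. Hence removing duplicates yields a strict chain of length at least $k-1$ in $M^{n-1}$. Since $M^{n-1}$ inherits a *composition series of length $n-1$, the inductive hypothesis gives $k-1 \leq n-1$, i.e.\ $k \leq n$.

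From the length bound I would deduce the rest immediately: applying it to a second *composition series of length $n'$ as a chain gives $n' \leq n$, and by symmetry $n = n'$. For the refinement statement, note that any strict chain of graded submodules that is not already a *composition series admits a proper graded submodule between two of its terms, and inserting it strictly increases the length; since the length is bounded by $n$, this process must terminate at a *composition series of length exactly $n$.

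There is essentially no serious obstacle: the grading is preserved throughout because kernels, images, intersections, and sums of graded submodules are graded, and *simple plays precisely the role that simple plays in the ungraded proof. The only point where one must be slightly careful is the modular-law step, ensuring that the short exact sequence $0 \to N^i \cap M^{n-1} \to N^i \to N^i/(N^i \cap M^{n-1}) \to 0$ consists of graded modules and degree-zero maps, which is automatic.
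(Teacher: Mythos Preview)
Your proposal is correct and is precisely the effort the paper leaves to the reader: the paper gives no argument of its own but simply points to Eisenbud's Theorem~2.13 and says the proof is ``nearly identical to the ungraded case \ldots\ with only minor adjustments made to account for the grading.'' Your induction on $n$, intersecting a given chain with the penultimate term $M^{n-1}$ of a fixed *composition series and using *simplicity of $M/M^{n-1}$ to bound the number of collapses, is exactly the standard Jordan--H\"older argument carried out in the graded category, and your remarks about kernels, images, intersections, and sums of graded submodules remaining graded are the ``minor adjustments'' the paper alludes to.
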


\begin{definition}  If $M$ has a *composition series as an $A$-module,
the *length of $M \in \grmod(A)$ is defined to be the length of a *composition series for $M$.  We use the notation $*\ell_A(M)$ for this number; as usual, we say $*\ell_A(M) = \infty$ if $M$ does not have a *composition series.
\end{definition}

If we forget all gradings on $A$ and $M$, $\ell_A(M)$ denotes the usual length of $M$ as an $A$-module.

Some properties of $*\ell_A$ are as expected: 
\begin{itemize}
\item  If $0 \rightarrow M \rightarrow N \rightarrow P \rightarrow 0$ is an exact sequence in $\grmod(A)$, then $N$ has a *composition series if and only if both $M$ and $P$ do; and in this case, $*\ell_A(N) = *\ell_A(M) + *\ell_A(P)$.
\item If $d \in \mathbb{Z}$, then $*\ell_A(M(d)) = *\ell_A(M)$.
\end{itemize}

The only simple $A$-modules in the ungraded case are $A$-modules of the form $A/\m$, where $\m$ is a maximal ideal of $A$ (recall all rings are commutative).  Thus we are led to define graded fields; these are the rings of *length zero as modules over themselves.

\begin{theorem} \cite{Eis}
\label{theorem gr field}
Let $F$ be a graded ring.  The following are equivalent:
\begin{enumerate}
\item Every nonzero homogeneous element in $F$ is invertible.
\item $F_0$ is a field and either $F = F_0$, or there exists a $d>0$ and an $x \in F_d$ such that  $F \cong F_0[x,x^{-1}]$ as a graded ring.  In fact, in this last case,  $d>0$ is the smallest positive degree with $F_d \neq 0$.  
\item The only graded ideals in $F$ are $F$ and $0$. 
\end{enumerate}

\noindent A ring satisfying any of these three equivalent conditions is called a \textbf{graded field}.
\end{theorem}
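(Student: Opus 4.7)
The plan is to establish the equivalences in the order $(1) \Leftrightarrow (3)$, then $(1) \Rightarrow (2)$, and finally the trivial $(2) \Rightarrow (1)$.

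First I would handle $(1) \Rightarrow (3)$: if $\I$ is any nonzero graded ideal, then by definition it is generated by homogeneous elements, so it contains some nonzero homogeneous element $x$, which is invertible by hypothesis, giving $\I = F$. Conversely, for $(3) \Rightarrow (1)$, take a nonzero homogeneous $x \in F$ and consider the principal ideal $(x)$; since $x$ is homogeneous, $(x)$ is graded, and being nonzero it must equal $F$ by hypothesis. Hence $1 = ax$ for some $a \in F$, and extracting the degree $-\deg(x)$ homogeneous component of $a$ on both sides (using that $1$ lies in $F_0$) produces a homogeneous inverse of $x$.

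The substantive direction is $(1) \Rightarrow (2)$. First note that $F_0$ is a field: any nonzero $c \in F_0$ is homogeneous, hence invertible, and by degree-counting its inverse again lies in $F_0$. If $F_n = 0$ for every $n \neq 0$, then $F = F_0$ and we are done. Otherwise, suppose some $F_n \neq 0$ with $n \neq 0$; by taking inverses of homogeneous elements, a nonzero element in degree $-n$ produces one in degree $n$, so there is some nonzero $F_n$ with $n > 0$. Let $d > 0$ be the smallest such $n$, and pick a nonzero $x \in F_d$. Then $x$ is invertible, and comparing degrees in $x \cdot x^{-1} = 1$ forces $x^{-1} \in F_{-d}$. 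Next I would show that $F_n \neq 0$ implies $d \mid n$: writing $n = qd + r$ with $0 \le r < d$, any nonzero $y \in F_n$ gives the nonzero homogeneous element $y x^{-q} \in F_r$, so minimality of $d$ forces $r = 0$. Finally, for each $k \in \Z$, any $y \in F_{kd}$ satisfies $y x^{-k} \in F_0$, so $y = cx^k$ for some $c \in F_0$; thus $F_{kd} = F_0 x^k$, and assembling the pieces gives the graded isomorphism $F \cong F_0[x, x^{-1}]$.

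The implication $(2) \Rightarrow (1)$ is immediate: in $F_0$ every nonzero element is invertible since $F_0$ is a field, and in $F_0[x, x^{-1}]$ every nonzero homogeneous element has the form $cx^k$ with $c \in F_0 \setminus \{0\}$ and $k \in \Z$, which is inverted by $c^{-1}x^{-k}$.

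The main obstacle is the $(1) \Rightarrow (2)$ step, specifically verifying that the nonzero graded pieces are concentrated in degrees that are integer multiples of the minimal positive degree $d$ and that each such piece is one-dimensional over $F_0$; both rely crucially on being able to multiply by $x$ and $x^{-1}$ to shift degrees freely, which is precisely what the hypothesis provides.
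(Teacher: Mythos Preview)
Your proof is correct and follows the standard argument. Note, however, that the paper does not actually give its own proof of this theorem: it simply cites \cite{Eis} (Eisenbud) and states the result. Your argument is essentially the one found there, so there is nothing to compare against in the paper itself.
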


\begin{example} \label{example graded field}  If $F$ is a graded field with a nonzero positive degree element, then $F$ is *simple as a module over itself, but it is not simple as such.  To see this write $F = F_0[t,t^{-1}]$, with $\deg(t) = d >0$, and $F_0$ a field. So $F$ is certainly *simple, but if $\mathcal{J}$ is the ungraded ideal generated by $t+1$, $\mathcal{J}$ is a nonzero proper $F$-submodule of $F$, so $F$ is not simple.  Furthermore, $F$ has a unique *maximal ideal, the zero ideal, but has as least as many ungraded nonzero maximal ideals as the nonzero elements of $F$.  While $F$ has a *composition series, it has no composition series.

\end{example}

Similarly to the ungraded case, $M$ is a *simple $A$-module if and only if there exists a *maximal ideal $\mathcal{N}$ of $A$, an integer $d$ and a graded $A$-module isomorphism $M \cong (A/\mathcal{N})(d)$:  if $M$ is *simple, let $x$ be any nonzero homogeneous element of $M$, say $\deg(x) = -d$.  Then, the submodule of $M$ generated by $x$ is nonzero and graded, so must be all of $M$.  The homomorphism $A(d) \rightarrow M$ of graded $A$-modules defined by $a \mapsto ax$ is thus surjective; its kernel is a graded ideal in $A(d)$ of the form $\mathcal{N}(d)$ for some graded ideal $\mathcal{N}$ of $A$; since $M$ is *simple, $\mathcal{N}$ must be *maximal.  The converse is left to the reader. 

Other facts parallel to the ungraded case include:   1) for every proper  graded ideal $\mathcal{I}$ in $A$, there exists a *maximal ideal $\mathcal{N}$ containing $\mathcal{I}$;  2) if $\mathcal{N}$ is a proper graded ideal of $A$, then $\mathcal{N}$ is *maximal if and only if  $A/\mathcal{N}$ is a graded field.  Thus, every *maximal ideal in $A$ is a graded prime ideal. Furthermore, if $\mathcal{N}$ is *maximal in $A$, then $\mathcal{N}_0$ is a maximal ideal in $A_0$.

The structure of finitely generated graded modules over graded fields mirrors that for the ungraded category:

\begin{lemma}  Suppose that $M$ is a finitely generated graded module over a graded field $F = F_0[t,t^{-1}]$, where $t$ has positive degree $d$ and  $F_0$ is a field.  Then
\begin{enumerate}
\item[a)] $M$ is a free graded  $F$-module, of finite rank, on a set of homogeneous generators.
\item[b)]  $M_0$ is a finite-dimensional vector space over $F_0$ of $F_0$-dimension less than or equal to the rank of $M$ over $F$.
\end{enumerate}
\end{lemma}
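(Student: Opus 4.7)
The plan for (a) is to produce a minimal set of homogeneous generators and then prove it is a basis. Since $M$ is finitely generated, I would start with any finite generating set and replace each generator by its finitely many homogeneous components, obtaining a finite set of homogeneous generators. Among all such finite generating sets by homogeneous elements I would choose one of minimum cardinality, say $x_1, \dots, x_n$ with $\deg x_i = e_i$. It then suffices to show $F$-linear independence. Given a relation $\sum_{i=1}^n a_i x_i = 0$ with $a_i \in F$, I would decompose each $a_i$ into homogeneous components and collect the result by total degree in $M$. Since $M$ is graded, each total-degree piece vanishes separately, so without loss of generality each $a_i$ is homogeneous and all the products $a_i x_i$ lie in a common degree of $M$.

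Now comes the key step: if some $a_j \neq 0$, then by Theorem~\ref{theorem gr field} the nonzero homogeneous element $a_j$ is invertible in $F$, and the relation gives
\[
x_j = -a_j^{-1} \sum_{i \neq j} a_i x_i,
\]
expressing $x_j$ as an $F$-linear combination of the remaining $x_i$. This contradicts the minimality of the generating set, so all $a_i$ vanish. Therefore $\{x_1, \dots, x_n\}$ is an $F$-basis consisting of homogeneous elements, and one has a graded isomorphism
\[
M \;\cong\; \bigoplus_{i=1}^n F(-e_i).
\]

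For part (b), the plan is to read off $M_0$ from this decomposition using the very explicit structure of $F = F_0[t,t^{-1}]$. Since $\deg t = d > 0$ and every element of $F$ is a Laurent polynomial in $t$ with coefficients in $F_0$, the homogeneous piece $F_k$ is the one-dimensional $F_0$-vector space spanned by $t^{k/d}$ when $d \mid k$, and is $0$ otherwise. Consequently
\[
(M_0) \;\cong\; \bigoplus_{i=1}^n F(-e_i)_0 \;=\; \bigoplus_{i=1}^n F_{-e_i},
\]
and each summand contributes $1$ to the $F_0$-dimension precisely when $d \mid e_i$, and $0$ otherwise. Hence $\dim_{F_0} M_0$ equals the number of indices $i$ with $d \mid e_i$, which is at most $n = \operatorname{rank}_F M$, as claimed.

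The main potential obstacle is purely bookkeeping around the grading in the independence argument — one has to be careful that the reduction to homogeneous coefficients preserves the relation coefficient-by-coefficient, and that the graded-field property really does produce the needed inverse in the right degree. Once these routine checks are done, both statements fall out immediately from the explicit description of $F$.
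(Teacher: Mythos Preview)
Your proposal is correct and follows essentially the same approach as the paper: both take a minimal homogeneous generating set, use invertibility of nonzero homogeneous elements in $F$ to derive a contradiction to minimality, and then for (b) identify $M_0$ by observing that $F_{-e_i}$ contributes one $F_0$-dimension exactly when $d \mid e_i$. The paper writes down the explicit basis $\{t^{-e_i/d}x_i : d \mid e_i\}$ for $M_0$, whereas you phrase it via the direct sum decomposition $M \cong \bigoplus F(-e_i)$, but these are the same argument.
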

\begin{proof}  Assume $M \neq 0$.  Say $M$ is finitely generated over $F$ by homogeneous elements $e_1, \ldots, e_r$, where $r\geq 1$ is the minimal number for a homogeneous generating set for $M$ as an $F$-module.  Then, $M$ is free on the $e_js$:  certainly this set spans $M$ over $F$.  Suppose that there is a relation 
$\sum_j \alpha_je_j = 0,$ with $\alpha_j \in F$.  We may assume that all the $\alpha_js$ are homogeneous.  If $\alpha_r \neq 0$, then it is invertible in $F$, so
$\sum_{j=1}^{r-1} \alpha_r^{-1}\alpha_je_j +e_r= 0$, implying that $r$ is not minimal.  Therefore $\alpha_r = 0$; and continuing the process, $\alpha_j=0$ for every $j$.

Set $d_j = \deg e_j$.  Now, note that $X \doteq \{t^{-d_j/d}e_j \mid 1 \leq j \leq r  \;\mbox{and}\; d \; \mbox{divides} \; d_j \}$ is a basis for $M_0$ over $F_0$; of course, if $d$ does not divide any $d_j$, then  $M_0 = 0$.  To see this, note that $X$ is linearly independent over $F_0$, since the $e_js$ are linearly independent over $F$.  If $x \in M_0$, then $x = \sum_j \alpha_je_j$, where $\alpha_j$ is a homogeneous element of $F$ and $\deg \alpha_j + d_j = 0, \forall j$.  Now, if $\alpha_j \neq 0$, $d$ divides its degree, by definition of $F$.   Thus, $d$ divides $d_j$ for every $j$ such that $\alpha_j \neq 0$.  If $d$ divides $d_j$, then $\alpha_j = \beta_j t^{-d_j/d},$ where $\beta_j \in F_0$.   Thus $x$ is in the $F_0$-span of $X$.  
\end{proof}

\begin{definition}
$M \in \grmod(A)$ is said to be a *Artinian module if $M$ satisfies DCC on all chains of graded $A$-submodules of $M$.
\end{definition}

Unlike the Noetherian case, an $A$-module $M$ can be *Artinian without being Artinian:  an example is given by $A=M$, where $A$ is a graded field with a nonzero positive degree element.  

Similarly to the ungraded case, we have
\begin{lemma}  Suppose that $A$ is a graded Noetherian ring and $M \in \grmod(A)$.  Then the following are equivalent:
\begin{itemize}
\item[a)] $M$ is *Artinian.
\item[b)] $*\ell_A(M) < \infty.$
\item[c)] $*V(M)$ consists of a finite number of *maximal ideals.
\end{itemize}
\end{lemma}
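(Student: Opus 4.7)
The plan is to prove the cycle (b)$\Rightarrow$(a)$\Rightarrow$(c)$\Rightarrow$(b), using the graded filtration of Lemma \ref{corollary the filtration graded} as the central tool. The implication (b)$\Rightarrow$(a) is immediate: any descending chain of graded submodules refines to a chain of length at most $*\ell_A(M)$ and therefore stabilizes.

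For (a)$\Rightarrow$(c) I would first prove the auxiliary fact that, for $\p$ a graded prime of $A$, the graded $A$-module $A/\p$ is *Artinian if and only if $\p$ is *maximal. The ``if'' direction is clear, since $A/\p$ is then a graded field by the remarks after Example \ref{example graded field}, and has only the trivial graded ideals. For ``only if'', write $B = A/\p$, pick any nonzero homogeneous $x \in B$, and apply the DCC to the chain of graded ideals $(x) \supseteq (x^2) \supseteq \cdots$; stability gives $x^n = bx^{n+1}$ for some homogeneous $b$, and since $B$ is a graded domain this forces $bx = 1$, so every nonzero homogeneous element of $B$ is invertible, which is precisely condition (1) of Theorem \ref{theorem gr field}. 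Now, given (a), apply Lemma \ref{corollary the filtration graded} to obtain a finite graded filtration of $M$ with quotients $A/\p_i(-d_i)$. Each $A/\p_i$ is (up to shift) a subquotient in $\grmod(A)$ of $M$, hence inherits the *Artinian property, so each $\p_i$ is *maximal by the auxiliary fact. The inclusions
\[
*\mathrm{Supp}_A(M) \supseteq \{\p_1,\ldots,\p_n\} \supseteq \text{minimal primes of }M
\]
of Lemma \ref{corollary the filtration graded} then show that the minimal primes of $M$ are among the $\p_i$ and are all *maximal; since every element of $*V(M) = *\mathrm{Supp}_A(M)$ contains some minimal prime, and a *maximal ideal is a maximal element among proper graded ideals, $*V(M)$ equals this finite set of *maximal ideals.

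For (c)$\Rightarrow$(b) I would again invoke Lemma \ref{corollary the filtration graded} to get a graded filtration with quotients $A/\p_i(-d_i)$ where $\p_i \in *\mathrm{Supp}_A(M) = *V(M)$. By hypothesis each $\p_i$ is *maximal, so by Theorem \ref{theorem gr field} the ring $A/\p_i$ is a graded field; hence its only graded $A$-submodules are $0$ and itself, which makes $A/\p_i(-d_i)$ a *simple graded $A$-module. The given filtration is therefore already a *composition series, which bounds $*\ell_A(M)$ by the length of the filtration, proving (b).

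The main technical obstacle is the domain step in the auxiliary lemma (*Artinian graded domain is a graded field); every other step is a mechanical translation of the classical ungraded proof, made legitimate by the graded filtration lemma and the characterization of graded fields already recorded in Theorem \ref{theorem gr field}.
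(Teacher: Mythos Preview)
Your proof is correct, and it takes a genuinely different route from the paper. The paper proves (a)$\Leftrightarrow$(b) by reduction to the ungraded argument, then (b)$\Rightarrow$(c) by reading off the *maximal ideals $\m_i$ from a *composition series and observing $\m_1\cdots\m_n\subseteq Ann_A(M)$, and finally (c)$\Rightarrow$(b) by the classical radical argument: the finitely many *maximal minimal primes have a product with some power lying in $Ann_A(M)$, from which one builds a *composition series by hand. Your argument instead makes Lemma~\ref{corollary the filtration graded} do essentially all the work: for (c)$\Rightarrow$(b) you simply note that the primes $\p_i$ appearing in the filtration lie in $*Supp_A(M)=*V(M)$ and are therefore *maximal by hypothesis, so the filtration \emph{is} a *composition series; for (a)$\Rightarrow$(c) you feed the *Artinian hypothesis through the subquotients $A/\p_i$ and invoke the clean auxiliary fact that a *Artinian graded domain is a graded field. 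Your approach is more unified and avoids the nilpotent-radical bookkeeping; the paper's approach stays closer to the standard ungraded template and does not need the auxiliary domain lemma. Both are short and natural.
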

\begin{proof}  The proof of the equivalence of a) and b)  in the ungraded case, as in \cite{AtMac}, adapts in a straightforward way to the graded case. Note that the proof of ``b) implies a)" does not require $A$ to be Noetherian.

 To see how b) implies c), assume that $M$ has a *composition series
$$0=M^0 \subset M^1 \subset \cdots \subset M^{n-1} \subset M^n=M;$$ the *simplicity of the subquotients means that there are *maximal graded ideals $\m_i$ of $A$ and integers $d_i$ such that $M^i/M^{i-1} \cong (A/\m_i)(d_i)$ as graded $A$-modules.  Thus, $\m_1 \m_2 \cdots \m_n \subseteq Ann_A(M)$.  If $\p$ is a prime minimal over $Ann_A(M)$, then we have seen that $\p$ is graded.  Since $\m_1 \cdots \m_n \subseteq \p$, we must have $\m_i \subseteq \p$ for at least one $i$.  But $\m_i$ is *maximal, so $\m_i =\p$.  Therefore $*V(M) \subseteq \{\m_1, \ldots, \m_n \}$.  

For c) implies b), since  $\sqrt{Ann_A(M)}$ is the intersection of the primes minimal over $Ann_A(M)$, and there are a finite number of  these,  all graded, the hypothesis implies that this finite list of primes consists entirely of *maximal ideals; say these ideals are $\m_1, \ldots, \m_n$.  Thus, there is an $N$ such that $(\m_1 \cdots \m_n)^N \subseteq Ann_A(M)$ and  there is a sequence $\tilde{\m}_1, \ldots, \tilde{\m}_{nN}$ of *maximal ideals in $A$, not necessarily distinct, whose product is contained in $Ann_A(M)$.  Analogously to  the ungraded case, one can then construct a *composition series for $M$.
\end{proof}

If $V$ is a graded vector space over an ordinary field $k$, where $k$ is  regarded as a graded ring concentrated in degree zero (this means that all nonzero elements have degree zero), then $$\vdim_k(V) \doteq \; \mbox{the dimension of } \; V \; \mbox{as a vector space over} \; k.$$

\begin{lemma} \label{lemma length of tensor}
Let $A$ be a graded Noetherian ring which is a finitely generated graded algebra over a field $k \subseteq A_0$, $M \in \grmod(A)$, $V$ a graded finite dimensional vector space over $k$, and say that $\vdim_k(V) = d$. If $a \in A$, $m \otimes v \in M \otimes_k V$, then give $M \otimes_k V$ an $A$-module structure by $a \cdot (m \otimes v) \doteq (a \cdot m) \otimes v$, and grade $M \otimes_k V$ in the usual way. Then $$*\ell_A(M\otimes_k V) = *\ell_A(M)\cdot d.$$  
\end{lemma}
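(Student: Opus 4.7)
The plan is to reduce the statement to the additivity of $*\ell_A$ on short exact sequences together with its invariance under shifts, which are already listed as properties of $*\ell_A$.

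First, I would observe that since $V$ is a graded $k$-vector space of total $k$-dimension $d$, and $k$ is concentrated in degree zero, we have a homogeneous decomposition $V = \bigoplus_{i \in \mathbb{Z}} V_i$ with $\sum_i \dim_k V_i = d$. Picking an ordinary $k$-basis of each $V_i$ and taking the union, I get a homogeneous $k$-basis $v_1, \ldots, v_d$ of $V$, where $v_j$ has some degree $e_j$. Thus $V \cong \bigoplus_{j=1}^d k(-e_j)$ as a graded $k$-module.

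Next, since tensor product over $k$ commutes with direct sums and since $M \otimes_k k v_j$ has homogeneous component of degree $n$ equal to $M_{n-e_j} \otimes v_j$, I would identify
\[
M \otimes_k V \;\cong\; \bigoplus_{j=1}^d M(-e_j)
\]
as graded $A$-modules, where the $A$-action is the one described in the statement.

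Finally, I would apply the two bulleted properties of $*\ell_A$ from earlier in the section. From the short exact sequence property applied inductively to the direct sum decomposition, $*\ell_A\bigl(\bigoplus_{j=1}^d M(-e_j)\bigr) = \sum_{j=1}^d *\ell_A(M(-e_j))$, and from shift-invariance, each $*\ell_A(M(-e_j)) = *\ell_A(M)$. Combining these gives $*\ell_A(M \otimes_k V) = d \cdot *\ell_A(M)$, which is exactly the claim (both when $*\ell_A(M)$ is finite and, by the same decomposition, when it is infinite). There is really no hard step here; the only mild subtlety is to be careful that the homogeneous basis of $V$ exists and that the grading conventions line up so that $M \otimes_k k(-e_j) \cong M(-e_j)$ rather than $M(e_j)$, which follows directly from the convention $M(d)_j = M_{d+j}$ recalled in Section~\ref{Introductory Results in the Graded Context}.
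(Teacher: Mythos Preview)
Your proposal is correct and follows essentially the same approach as the paper: both arguments decompose $V$ into one-dimensional graded pieces, identify $M \otimes_k V$ with a direct sum of shifted copies of $M$, and finish using additivity of $*\ell_A$ over short exact sequences together with shift-invariance. The only cosmetic difference is that the paper first filters $M \otimes_k V$ by the degree pieces $V_0 \oplus \cdots \oplus V_{n-i}$ and then decomposes each quotient $M \otimes_k V_j$, whereas you go directly to a homogeneous basis of $V$ and the resulting direct-sum decomposition in one step; your route is slightly more streamlined but not genuinely different.
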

\begin{proof}  Since $V$ is finite dimensional over $k$, we may suspend $V$ appropriately and assume, without loss of generality, that $V_j = 0$ for $j <0$; in this case, there exists an $n$ such that $j > n$ implies $V_j =0$. Define a graded filtration of $M\otimes_k V$ by graded $A$-modules: $\mathcal{F}^i \doteq M \otimes_k (V_0 \oplus \cdots \oplus V_{n-i})$ for $0 \leq i \leq n$, and $\mathcal{F}^{n+1} \doteq 0$. Consider that $\mathcal{F}^i/\mathcal{F}^{i+1} \cong M \otimes_k V_{n-i}$, and the additive property of length allows $*\ell_A(M\otimes_k V) = \sum_{i=0}^n *\ell_A(\mathcal{F}^i/\mathcal{F}^{i+1}) = \sum_{i=0}^n*\ell_A(M \otimes_k V_{n-i})$.

By hypothesis, each graded component $V_j$ of $V$ is a finite dimensional graded vector space concentrated in degree $j$.  Thus, there is a graded isomorphism for each $j$, $V_j \cong k^{f(j)}(-j)$ where $f(j)$ is a function giving the vector space dimension of $V_j$.  Since $M \otimes_k V_j \cong M \otimes_k k^{f(j)} \cong \oplus_1^{f(j)} M(-j)$, we have that $*\ell_A(M \otimes_A V_j) = *\ell_A(M)\cdot f(j)$.  By hypothesis, $\sum_{j=0}^n f(j) = d$, the total vector space dimension of $V$, and finally $*\ell_A(M \otimes_k V) = \sum_{i=0}^n*\ell_A(M \otimes_k V_{n-i}) = \sum_{i=0}^n *\ell_A(M)\cdot f(n-i) = *\ell_A(M) \sum_{i=0}^n f(n-i) = *\ell_A(M) \cdot d$.
\end{proof}

\subsubsection{Positively or Negatively Graded Rings}

 A graded ring $S$ is positively (resp. negatively) graded if and only if $S_i = 0$ for $i <0$ (resp. $i>0$).  The graded ideal $S_+$  (resp. $S_-$) of $S$ is defined as $\oplus_{i >0} S_i$ (resp. $\oplus_{i<0} S_i$).  Note that if $M \in \grmod(S)$, since $S$ is positively (resp. negatively) graded,  there exists an integer $e$ such that $M_i =0$ for all $i <e$ (resp. $i >e$).  Also, for a proper, graded ideal $\m$ of $S$, the following are equivalent:
\begin{itemize}
\item $\m$ is *maximal in $S$. 
\item $\m = \m_0 \oplus S_+,$ (resp. $\m_0 \oplus S_-$) and $\m_0$ (the degree zero elements of $\m$) is a maximal ideal in $S_0$.
\item  $S/\m$ is a graded field, concentrated in degree zero; i.e. $S/\m$ is an ordinary field.
\item $\m$ is a maximal ideal in $S$.
\end{itemize}

For positively or negatively graded rings, there is  no difference between *length and length:

\begin{lemma} \label{lemma pos graded length equals *length} Suppose that $S$ is a positively or negatively graded Noetherian ring, and $M \in \grmod(S)$ is such that $*\ell_S(M) < \infty$.  Then, $*\ell_S(M) = \ell_S(M)$.  
\end{lemma}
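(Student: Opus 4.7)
The plan is to exploit the fact stated just before the lemma: for a positively or negatively graded ring $S$, the *maximal ideals coincide exactly with the maximal ideals, and every quotient $S/\mathcal{N}$ by a *maximal ideal $\mathcal{N}$ is an ordinary field concentrated in degree zero. This means that *simple modules over such $S$ are already simple in the ungraded sense, which reduces the problem to checking that a *composition series is automatically a composition series.

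First I would pick a *composition series
$$0 = M^0 \subset M^1 \subset \cdots \subset M^n = M,$$
whose existence is guaranteed by the hypothesis $*\ell_S(M) = n < \infty$. By the structure theorem for *simple modules reviewed in the excerpt, for each $i$ there is a *maximal ideal $\m_i$ of $S$ and an integer $d_i$ with a graded isomorphism $M^i/M^{i-1} \cong (S/\m_i)(d_i)$.

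Next I would invoke the equivalences listed for proper graded ideals in positively/negatively graded rings: each $\m_i$, being *maximal in $S$, is in fact a maximal ideal of $S$, and $S/\m_i$ is an ordinary field concentrated in degree zero. Forgetting the grading, $(S/\m_i)(d_i) \cong S/\m_i$ as an $S$-module, and $S/\m_i$ is a simple $S$-module since $\m_i$ is maximal. Thus every subquotient $M^i/M^{i-1}$ is simple as an ungraded $S$-module, so the same chain
$$0 = M^0 \subset M^1 \subset \cdots \subset M^n = M$$
is already an ordinary composition series for $M$ over $S$. Consequently $\ell_S(M)$ is finite and equals $n = *\ell_S(M)$.

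There is no real obstacle here; the only subtlety is keeping straight the role of the hypothesis that $S$ is positively or negatively graded: without it, a *simple module such as the graded field $F_0[t,t^{-1}]$ over itself is not simple (see the earlier example), so one genuinely needs the collapse of *maximal ideals to maximal ideals that occurs in the positively or negatively graded setting.
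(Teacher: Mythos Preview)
Your proof is correct and follows essentially the same approach as the paper: take a *composition series, use the fact that in the positively or negatively graded case each *maximal ideal $\m_i$ is actually maximal with $S/\m_i$ an ordinary field concentrated in degree zero, and conclude that the *composition series is already an ordinary composition series.
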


\begin{proof}  Since $*V(M)$ consists of a finite number of *maximal ideals, there is a sequence of graded $S$-modules
$$0=M^0 \subset M^1 \subset \cdots \subset M^{n-1} \subset M^n=M,$$  *maximal graded ideals $\m_i$ of $S$ and integers $d_i$ such that $M^i/M^{i-1} \cong (S/\m_i)(d_i)$ as graded $S$-modules.  By the remark above,   $S/\m_i$ is concentrated in degree 0 and each $\m_i$ is a maximal ideal in $S$. So,  forgetting gradings everywhere, the given *composition series is a composition series.
\end{proof}

Even in the cases where *length and length coincide, we'll usually just talk about *length, emphasizing constructions using graded modules only.  For example,

\begin{lemma}  \label{lemma *length of components} Suppose $S$ is a positively graded ring and  $X \in \grmod(S)$.
\begin{itemize}
\item[a)] If $*\ell_S(X) < \infty$, there exists an integer $J$ such that if $j > J$, then $X_j= 0$.  
\item[b)] If  $S_i$ is finitely generated as an $S_0$-module for every $i$, then $X_j$ is a finitely generated $S_0$-module, for every $j$.
\item[c)] Suppose  $S_0$ is Artinian, $S_i$ is finitely generated as an $S_0$-module for every $i$, and there exists an integer $J$ such that if $j >J$, then $X_j = 0$. Then,  $\ell_{S_0}(X_j) < \infty$ for every $j$,  and  $*\ell_S(X) = \ell_{S_0}(X) < \infty$, where $\ell_{S_0}(X) \doteq \sum_j \ell_{S_0}(X_j)$ is the (total) $S_0$-length  of $X$.
\end{itemize}
\end{lemma}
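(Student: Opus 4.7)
For part (a), the plan is to apply Lemma \ref{corollary the filtration graded} to produce a *composition series $0 = M^0 \subset M^1 \subset \cdots \subset M^n = X$ with subquotients $M^i/M^{i-1} \cong (S/\m_i)(d_i)$, where each $\m_i$ is a *maximal graded ideal of $S$. Because $S$ is positively graded, the earlier characterization of *maximal ideals gives $\m_i = \m_{i,0} \oplus S_+$, so $S/\m_i$ is concentrated in degree $0$. Hence $(S/\m_i)(d_i)$ lives entirely in degree $-d_i$, and a short induction up the filtration shows $X_j = 0$ for every $j > J \doteq \max_i(-d_i)$.

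For (b), pick homogeneous generators $x_1,\ldots,x_r$ of $X$ over $S$ with $\deg x_i = e_i$. Then $X_j = \sum_{i=1}^r S_{j-e_i}\, x_i$ is the image of the $S_0$-module homomorphism $\bigoplus_i S_{j-e_i} \to X_j$, and is therefore finitely generated over $S_0$ since each $S_{j-e_i}$ is by hypothesis.

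For (c), since $S$ is positively graded and $X$ is finitely generated, there is some $e$ (e.g.\ $e = \min_i e_i$) with $X_j = 0$ for $j < e$; combined with the hypothesis, only finitely many $X_j$ are nonzero. By (b), each $X_j$ is a finitely generated module over the Artinian (hence Noetherian) ring $S_0$, so $\ell_{S_0}(X_j) < \infty$ and $\ell_{S_0}(X) \doteq \sum_j \ell_{S_0}(X_j)$ is a finite integer. To compare the two lengths I will build a *composition series grade-by-grade. Set $X^{(k)} \doteq \bigoplus_{j \geq k} X_j$; this is a graded $S$-submodule because $S$ is positively graded, and $X^{(k)}/X^{(k+1)} \cong X_k$ as a graded abelian group, with the induced $S$-action killing $S_+$. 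Thus $X^{(k)}/X^{(k+1)}$ is a graded $S_0$-module concentrated in a single degree, and any composition series $0 = Y^0 \subset \cdots \subset Y^{\ell} = X_k$ over $S_0$, with factors $S_0/\m^{(t)}$, pulls back to a *composition series of $X^{(k)}/X^{(k+1)}$ whose factors are $(S/(\m^{(t)} + S_+))(-k)$, each *simple. Concatenating these over the finitely many nonzero grades gives $*\ell_S(X) = \sum_k \ell_{S_0}(X_k) = \ell_{S_0}(X)$.

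The one point demanding care is the identification in the last step: for a graded module $Y$ concentrated in a single degree $k$, every $S_0$-submodule is automatically graded, every $S$-submodule is an $S_0$-submodule (since $S_+$ acts as zero), and the *maximal ideals of $S$ containing $\mathrm{Ann}_S(Y)$ are exactly the ideals $\m + S_+$ for $\m$ maximal in $S_0$ containing $\mathrm{Ann}_{S_0}(Y)$. This bijection between composition series and *composition series is the crux of the equality $*\ell_S(X) = \ell_{S_0}(X)$; everything else is a straightforward assembly of finitely many steps.
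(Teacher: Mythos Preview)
Your argument is correct, and for parts (b) and (c) it matches the paper closely---(c) is essentially identical (the paper also filters by $X_{\geq t}$ and refines each one-degree quotient via an $S_0$-composition series), while your (b) is actually cleaner than the paper's: the paper runs an induction on $j$, building explicit $S_0$-generating sets, whereas your observation that $X_j = \sum_i S_{j-e_i}\,x_i$ is a quotient of $\bigoplus_i S_{j-e_i}$ gets there in one line.

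For (a) your approach differs from the paper's but is valid; however, the reference is wrong. Lemma~\ref{corollary the filtration graded} requires $S$ to be Noetherian (which is not assumed here) and produces a filtration with \emph{prime} subquotients, not a *composition series. What you actually want is simply the definition of $*\ell_S(X) < \infty$: this gives a *composition series with *simple subquotients, and the paper's characterization of *simple modules then yields $M^i/M^{i-1} \cong (S/\m_i)(d_i)$ with $\m_i$ *maximal. From there your argument goes through. By contrast, the paper proves (a) without invoking the structure of *simple modules at all: it just observes that finite *length forces $X$ to be *Artinian, so the descending chain $X_{\geq t_0} \supseteq X_{\geq t_0+1} \supseteq \cdots$ stabilizes, which immediately gives the bound $J$. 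Your route has the advantage of giving an explicit $J = \max_i(-d_i)$ in terms of the composition series, while the paper's is more self-contained.
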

\begin{proof} For every $t \in \mathbb{Z}$ define $X_{\geq t} \doteq \oplus_{s \geq t} X_s$.  Since $S$ is positively graded, $X_{\geq t}$ is a graded $S$-submodule of $X$.  Since $X$ is finitely generated over $S$, and $S$ is positively graded,  there exists a $t_0 \in \mathbb{Z}$ such that $X_{\geq t_0} = X$.  So we have a descending chain of graded $S$-submodules of $X$
$$ \cdots \subseteq X_{\geq t_0 +k} \subseteq X_{\geq t_0 + k-1} \subseteq \cdots  \subseteq X_{\geq t_0 + 1} \subseteq X_{\geq t_0} = X.  (*)$$ 

For a), if   $*\ell_S(X) < \infty, $ $X$ is *Artinian, so this chain stabilizes.  By definition, this means that there exists an $J \geq t_0$ such that $X_j = 0$ for $j >J$.

 For b), let $t_0$ be defined as in the first  paragraph above; assume that $X_{t_0} \neq 0$.  Then, one can prove, by induction on $j$, that each $X_j$ is finitely generated over $S_0$ as follows.  If $j = t_0$, then since $X$ is finitely generated as an $S$-module, say by $x_1, \ldots x_N$, if $\beta_{t_0} = \{ x_i \mid \deg(x_i) = t_0\}$, $X_{t_0}$ must be generated by $\beta_{t_0}$ as an $S_0$-module.  Assume that $j > t_0$ and $X_u$ is finitely generated over $S_0$ for $u <j$.  Then,   $X_{<j} = \oplus_{u=t_0}^{j-1} X_u = \oplus_{u<j}X_j$, is finitely generated over $S_0$. Choose a finite set $\beta_{<j}$ of homogeneous elements that generate $X_{<j}$ over $S_0$. Choose finite generating sets $\alpha_u$ for each $S_u$ over $S_0$.  Let $\beta_j = \{ x_i \mid \deg(x_i) = j \}$.  The claim is that the finite set $B_j \doteq \{ a e \mid a \in \alpha_u, e \in \beta_{<j} \; \mbox{and} \; u+ \deg(e) = j \} \cup \beta_j$ spans $ X_j$ over $S_0$:  if $x \in X_j$, then $x = \sum_i a_i x_i$, with $a_i$ homogeneous in $S$ for every $i$, and  if $a_ix_i \neq 0$, $\deg(a_i)  + \deg(x_i) = j$; from now on we'll just talk about the indices $i$ such that $a_ix_i \neq 0$.  If $\deg(a_i) = 0$, then $x_i \in \beta_j \subseteq B_j$ and $a_i \in S_0$.  If $\deg(a_i) >0$, then $\deg(x_i)$ is strictly less than $j$ so that $x_i$ is in the $S_0$-span of $\beta_{<j}$; certainly,  $a_i$ is in the $S_0$-span of $\alpha_{\deg(a_i)}$, so   $a_ix_i$ is in the $S_0$-span of $ B_j$. 

For c),  given $J$ such that $X_J = 0$ for $j >J$, and choosing $t_0 \leq J$ such that $X_j = 0$ for $j < t_0$, the chain (*) terminates at the left in 0, and has successive quotients isomorphic to a graded $S$-module $X_j$ (concentrated in degree $j$), where the $S$-module structure is determined by $rx = 0$ if $r \in S_+$.  Since b) says that each $X_j$ is finitely generated over $S_0$,  and $S_0$ is Artinian, the chain (*) may be refined to a *composition series of $X$, of length equal to $\sum_j \ell_{S_0}(X_j)$.

\end{proof}

\section{Graded Localization}

Localizing in the graded category can be done in a few ways.  We may localize as usual, forgetting the graded structures, we may localize at sets consisting of homogeneous elements, or as in Grothendieck \cite{GRO}, consider the degree zero part of this last localized module. In this section we make the relevant definitions, and compare the different methods.  

\begin{definition}
Let $T$ be a multiplicatively closed subset (MCS) consisting entirely of homogeneous elements of $A$. We'll call this a ``GMCS".  Since $T$ is an MCS we may construct the localization $T^{-1}M$ as usual.  By definition, $T^{-1}M$ is graded by: $(T^{-1}M)_i \doteq  \{ \frac{m}{t} \in T^{-1}M \mid  m \; \mbox{is homogeneous and} \; \deg m - \deg t = i \}$.With this grading, $T^{-1}M$ becomes a graded $T^{-1}A$-module.
In the case where $\p \in Spec(A)$, and $T$ is the set of homogeneous elements of $ A-\p$, we use the notation $M_{[\p]}$ to denote the localization $T^{-1}M$, graded as above.  
\end{definition}

For a GMCS $T$, we'll assume from now on that $1 \in T$ and $0 \notin T$.

The following list of lemmas collect some facts about graded localizations; we leave the proofs to the reader. 
\begin{lemma}  
Let $\p \in Spec(A)$.  The set of homogeneous elements in $A-\p$ is equal to the set of homogeneous elements in $A- \p^*$.  Therefore,  $M_{[\p]} = M_{[\p^*]}$ 
\end{lemma}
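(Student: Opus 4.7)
The plan is to reduce the equality of graded localizations to a set-theoretic equality of the two relevant GMCSs, since the construction of $M_{[\p]}$ depends on $\p$ only through its set of homogeneous elements in the complement. Once the two GMCSs coincide, the graded localizations are literally the same graded $A$-module, hence equal.

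First I would record the basic fact that $\p^{*} \subseteq \p$ by definition of $\p^{*}$ as the largest graded ideal contained in $\p$. Taking complements gives $A - \p \subseteq A - \p^{*}$, so a fortiori every homogeneous element of $A - \p$ is a homogeneous element of $A - \p^{*}$. This is the trivial inclusion.

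For the reverse inclusion, let $x$ be a homogeneous element of $A - \p^{*}$; I would argue by contradiction that $x \in A - \p$. Suppose instead $x \in \p$. Then $x$ is a homogeneous element of $\p$, and since $\p^{*}$ is (by the excerpt's definition) the ideal generated by all homogeneous elements of $\p$, we would have $x \in \p^{*}$, contradicting the choice of $x$. Hence $x \notin \p$, which gives the desired containment of homogeneous-complement sets in the other direction.

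Combining the two inclusions, the GMCS $T$ of homogeneous elements of $A - \p$ used to construct $M_{[\p]}$ equals the GMCS $T'$ of homogeneous elements of $A - \p^{*}$ used to construct $M_{[\p^{*}]}$. Since both graded localizations are defined as the ordinary localization $T^{-1}M = (T')^{-1}M$ with the same induced grading (which depends only on the underlying multiplicatively closed set), we conclude $M_{[\p]} = M_{[\p^{*}]}$ as graded $T^{-1}A = (T')^{-1}A$-modules. There is no real obstacle here; the content of the lemma is entirely the definition of $\p^{*}$, and the statement is essentially the observation that graded localization cannot distinguish $\p$ from its graded core.
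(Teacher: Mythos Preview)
Your proof is correct. The paper itself leaves this lemma to the reader (it is grouped with several routine facts about graded localization whose proofs are omitted), and your argument is precisely the intended verification: the only content is that the homogeneous elements of $\p$ are exactly those of $\p^*$, so the two GMCSs agree and the graded localizations coincide.
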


\begin{lemma}
\label{lemma graded localize the quotient neq 0}  
Let $\p$ and $\q$ be prime ideals of $A$, with $\q$ graded.  Then, $(A/\q)_{[\p]} \neq 0$ if and only if $\q \subseteq \p^*$. If $\p$ is a minimal prime of $A$, then $(A/\q)_{[\p]} \neq 0$ if and only if $\q = \p$. 
\end{lemma}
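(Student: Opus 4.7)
The plan is to reduce the whole statement to an intersection question, namely whether the multiplicative set $T$ of homogeneous elements of $A \setminus \p$ meets $\q$, and then exploit that $\q$ is graded.

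First, by the previous lemma, $T$ is exactly the set of homogeneous elements of $A \setminus \p^*$. Since $\q$ is prime, $A/\q$ is a domain, so its localization $T^{-1}(A/\q) = (A/\q)_{[\p]}$ vanishes if and only if the image of some $t \in T$ is zero in $A/\q$; equivalently, $T \cap \q \neq \emptyset$. So $(A/\q)_{[\p]} \neq 0$ iff no homogeneous element of $A \setminus \p^*$ lies in $\q$, which is the same as: every homogeneous element of $\q$ lies in $\p^*$.

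Now comes the one place where the hypothesis on $\q$ is used. Because $\q$ is graded, it is generated by its homogeneous elements, so ``every homogeneous element of $\q$ lies in $\p^*$'' is equivalent to $\q \subseteq \p^*$. This gives the first equivalence. (This is the only step that requires any thought at all; the rest is bookkeeping.)

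For the second statement, assume $\p$ is a minimal prime of $A$. By Proposition~\ref{lemma Ass is graded}(iv) applied to $M = A$, every minimal prime of $A$ is graded, so $\p = \p^*$. Then the first part gives $(A/\q)_{[\p]} \neq 0$ iff $\q \subseteq \p$, and since $\p$ is minimal in $\mathrm{Spec}(A)$ and $\q$ is prime, the containment $\q \subseteq \p$ forces $\q = \p$. Conversely, $\q = \p$ trivially yields $\q \subseteq \p = \p^*$, so $(A/\q)_{[\p]} \neq 0$.
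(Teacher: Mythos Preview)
Your proof is correct. The paper itself does not supply a proof of this lemma (it is one of a list of facts whose proofs are explicitly left to the reader), so there is nothing to compare against; your argument via $T\cap\q=\emptyset$ and the gradedness of $\q$ is exactly the natural one.
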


\begin{lemma}  \label{lemma props of graded localization}If $M \in \grmod(A)$, and $T$ is a GMCS in $A$, then
\begin{enumerate}
\item[a)] $T^{-1}M \in \grmod(T^{-1}A).$
\item[b)]  If $A$ is a Noetherian ring then $T^{-1}A$ is a Noetherian ring and $T^{-1}M \in \grmod(T^{-1}A)$.
\item[c)]  There is a one-one, inclusion-preserving correspondence between the prime ideals in $A$ that are disjoint from $T$, and the  prime ideals in $T^{-1}A$ given by $\p \mapsto T^{-1}\p$; moreover this correspondence restricts to a one-one correspondence between the graded prime ideals in $A$ disjoint from $T$ and the graded prime ideals in $T^{-1}A$, and further restricts to a one-one correspondence between the ideals (all graded) in $Ass_A(M)$ that are disjoint from $T$, and the ideals (also all graded) in $Ass_{T^{-1}A}T^{-1}M$.
\end{enumerate}
\end{lemma}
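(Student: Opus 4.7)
The plan is to prove each part in order, relying heavily on standard ungraded localization results and then imposing the grading compatibility.

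For (a), since $M \in \grmod(A)$, $M$ is finitely generated, and by replacing each element of a generating set with its homogeneous components one obtains a finite set of \emph{homogeneous} generators $m_1,\ldots,m_r$. Their images $m_i/1$ are homogeneous in the grading defined on $T^{-1}M$, and any fraction $m/t$ equals $(1/t)(m/1)$, where $m$ is an $A$-linear combination of the $m_i$; so the $m_i/1$ generate $T^{-1}M$ over $T^{-1}A$, proving $T^{-1}M \in \grmod(T^{-1}A)$ (no Noetherian hypothesis used). For (b), the fact that localization preserves the Noetherian property is standard (preimages of ideals under $A \to T^{-1}A$ give a bijection onto extended ideals and reflect the ACC), and combining with (a) gives the second claim immediately.

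For part (c), the ungraded bijection $\p \mapsto T^{-1}\p$ between primes of $A$ disjoint from $T$ and primes of $T^{-1}A$ is classical, as is inclusion preservation; so I only need to verify that the bijection restricts appropriately. If $\p$ is a graded prime disjoint from $T$, then $T^{-1}\p$ is generated as an ideal by the homogeneous elements $a/t$ with $a \in \p$ homogeneous, hence is graded. Conversely, suppose $Q$ is a graded prime of $T^{-1}A$ and let $\p$ be its contraction under the canonical map $\phi: A \to T^{-1}A$, which is a graded ring homomorphism of degree zero. Writing $a = \sum a_i \in \p$ with $a_i$ homogeneous, we have $\sum a_i/1 = \phi(a) \in Q$; each $a_i/1$ is homogeneous in $T^{-1}A$, so since $Q$ is graded each $a_i/1 \in Q$, giving $a_i \in \p$. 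Hence $\p$ is graded.

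For the associated prime statement, I would first recall the standard ungraded result (using the Noetherian hypothesis which is in force throughout this section): $Ass_{T^{-1}A}(T^{-1}M) = \{T^{-1}\p : \p \in Ass_A(M), \, \p \cap T = \emptyset\}$. To make the graded analog transparent, I would argue concretely in both directions: if $\p \in Ass_A(M)$ is disjoint from $T$, Proposition~\ref{lemma Ass is graded} provides a homogeneous element $m$ with $\p = Ann_A(m)$; then $T^{-1}\p \subseteq Ann_{T^{-1}A}(m/1)$, and the reverse inclusion follows because $(a/t)(m/1) = 0$ in $T^{-1}M$ means $sam = 0$ in $M$ for some $s \in T$, so $sa \in \p$, and since $s \notin \p$ we get $a \in \p$. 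For the reverse direction, a prime $Q \in Ass_{T^{-1}A}(T^{-1}M)$ annihilates some $m/t$; its contraction $\p$ is graded (by the argument above applied to the graded ideal $Q$), is disjoint from $T$, and one checks $\p \in Ass_A(M)$ via the standard argument.

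The main obstacle is really just bookkeeping: ensuring that the choices of homogeneous annihilators remain valid after localization, and in the contraction direction, being careful that the "clear denominators" manipulations produce homogeneous equalities. Noetherianity of $A$ is what lets us assume the ungraded Ass-localization bijection; everything else is a routine translation of the ungraded statements via the fact that $\phi: A \to T^{-1}A$ and the grading on $T^{-1}M$ are compatible with the homogeneous decomposition.
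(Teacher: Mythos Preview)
Your proof is correct, and in fact the paper does not supply its own argument for this lemma: it is one of a list of facts about graded localization for which the paper explicitly says ``we leave the proofs to the reader.'' Your write-up is exactly the kind of routine verification the authors have in mind---reducing to the classical ungraded localization results and then checking that gradedness is preserved under extension and contraction along the degree-zero ring map $A \to T^{-1}A$, together with the standard Noetherian $Ass$-localization bijection combined with Proposition~\ref{lemma Ass is graded}.
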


\begin{lemma}
\label{lemma graded iso of localizing a shifted graded module}
Let $M \in \grmod(A)$, $T$ a GMCS in $A$, and let $d$ be any integer.  Then there is a graded isomorphism of graded $T^{-1}A$-modules $T^{-1}(M(d)) \cong (T^{-1}M)(d)$.
\end{lemma}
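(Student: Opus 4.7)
The plan is essentially to observe that $M(d)$ and $M$ have the same underlying $A$-module; only the indexing of the grading differs. Consequently, the localizations $T^{-1}(M(d))$ and $T^{-1}M$ coincide as ordinary $T^{-1}A$-modules, and the identity map on the underlying set is the desired graded isomorphism once one checks that it shifts degrees correctly.

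First, I would make explicit the convention: a homogeneous element $m \in M(d)$ of degree $j$ is, by the definition $M(d)_j \doteq M_{d+j}$, a homogeneous element of $M$ of degree $d+j$. In particular, the set of homogeneous elements of $M(d)$ coincides with the set of homogeneous elements of $M$. Since the $A$-action on $M(d)$ is identical to that on $M$ (only the grading changes), the definition of $T^{-1}$ applied to each yields the same underlying set of equivalence classes $m/t$ with the same $T^{-1}A$-module structure.

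Next, I would define $\phi \colon T^{-1}(M(d)) \to (T^{-1}M)(d)$ to be the identity map $m/t \mapsto m/t$; by the preceding paragraph this is a well-defined isomorphism of (ungraded) $T^{-1}A$-modules. The only thing left is to verify that $\phi$ is graded of degree zero. Suppose $m/t \in (T^{-1}(M(d)))_i$, meaning $m$ is homogeneous in $M(d)$ with $\deg_{M(d)}(m) - \deg(t) = i$. If $m \in M_e$, then $m$ has $M(d)$-degree $e-d$, so $e - d - \deg(t) = i$, i.e.\ $\deg_M(m) - \deg(t) = d+i$. By definition of the grading on $T^{-1}M$, this says $m/t \in (T^{-1}M)_{d+i} = (T^{-1}M)(d)_i$, so $\phi$ carries the degree-$i$ component into the degree-$i$ component. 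The same computation run backwards shows $\phi^{-1}$ is also degree-preserving.

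There is no genuine obstacle here; the only thing to watch is bookkeeping between the two degree functions ($\deg_M$ versus $\deg_{M(d)}$), since the letter ``degree'' is overloaded. Keeping them notationally distinct, as above, makes the computation transparent.
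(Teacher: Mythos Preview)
Your proof is correct and is exactly the straightforward verification one expects; the paper itself does not give a proof of this lemma (it is one of a list of facts about graded localization explicitly left to the reader), so there is nothing to compare against.
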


\begin{lemma}  Suppose that $M \in \grmod(A)$.  
\begin{enumerate}
\item[a)]  $\p \in Supp_A(M)$ if and only if $M_{[\p^*]} \neq 0$ if and only if $\p^* \in *V(M)$.  Therefore, $$*V(M) = *Supp_A(M) =
 \{ \q \in *V(A) \mid M_{[\q]  } \neq 0\}.$$
\item[b)]  If $0 \rightarrow M \rightarrow N \rightarrow P \rightarrow 0$ is a short exact sequence in $\grmod{A}$, then $*V(N) = *V(M) \cup *V(P).$

\end{enumerate}
\end{lemma}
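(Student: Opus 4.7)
The plan is to prove part (a) by setting up a triangle of equivalences $\p \in Supp_A(M) \Leftrightarrow \p^* \in *V(M) \Leftrightarrow M_{[\p^*]} \neq 0$; the ``Therefore'' clause then follows immediately by observing that the middle and right conditions depend only on $\p^*$, which is an arbitrary graded prime.

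For the first equivalence, I would invoke the earlier lemma $Supp_A(M) = V(Ann_A(M))$, so $\p \in Supp_A(M) \Leftrightarrow Ann_A(M) \subseteq \p$. Since $Ann_A(M)$ is a graded ideal (Proposition \ref{lemma Ass is graded}) and $\p^*$ is the largest graded ideal contained in $\p$, any graded ideal sitting inside $\p$ already sits inside $\p^*$. Thus $Ann_A(M) \subseteq \p \Leftrightarrow Ann_A(M) \subseteq \p^*$, and since $\p^*$ is a graded prime this says exactly $\p^* \in *V(Ann_A(M)) = *V(M)$.

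For the second equivalence, the forward direction ``$\p^* \notin *V(M) \Rightarrow M_{[\p^*]} = 0$'' is the quick one: if $Ann_A(M) \not\subseteq \p^*$ pick a homogeneous $x \in Ann_A(M) \setminus \p^*$ (possible because both ideals are graded), and note that $x$ is a homogeneous unit in the localization that kills $M$, so $M_{[\p^*]} = 0$. The reverse direction is the main obstacle, because we must manufacture a nonzero element of $M_{[\p^*]}$ from the mere containment $Ann_A(M) \subseteq \p^*$. I would use Lemma \ref{corollary the filtration graded} to filter $M$ with subquotients $(A/\q_i)(-d_i)$ where $\q_i$ are graded primes; since a finite product of the $\q_i$ annihilates $M$, we get $\prod \q_i \subseteq Ann_A(M) \subseteq \p^*$, and primality of $\p^*$ forces some $\q_i \subseteq \p^*$. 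Then Lemma \ref{lemma graded localize the quotient neq 0} together with Lemma \ref{lemma graded iso of localizing a shifted graded module} gives $(M^i/M^{i-1})_{[\p^*]} \neq 0$, and exactness of graded localization (Lemma \ref{lemma props of graded localization}) propagates non-vanishing up the filtration to $M_{[\p^*]} \neq 0$.

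Part (b) then follows essentially formally: localizing the short exact sequence at the GMCS of homogeneous elements of $A - \q$ stays exact, yielding $0 \to M_{[\q]} \to N_{[\q]} \to P_{[\q]} \to 0$. Hence $N_{[\q]} \neq 0$ if and only if $M_{[\q]} \neq 0$ or $P_{[\q]} \neq 0$, and the characterization of $*V(-)$ obtained in part (a) immediately translates this into $*V(N) = *V(M) \cup *V(P)$.
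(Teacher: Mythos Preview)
Your proof is correct, but the route you take for the hardest implication in (a) differs from the paper's. The paper argues a cycle $M_\p \neq 0 \Rightarrow M_{[\p^*]} \neq 0 \Rightarrow \p^* \in *V(M) \Rightarrow M_\p \neq 0$, and for the last step it gives a direct, elementary argument: assuming $Ann_A(M) \subseteq \p^*$ but $M_\p = 0$, it picks homogeneous generators $x_1,\ldots,x_j$ of $M$, finds homogeneous $s_i \notin \p$ (hence $\notin \p^*$) with $s_ix_i = 0$, and observes that the product $s = s_1\cdots s_j$ is a homogeneous element of $Ann_A(M)$ outside $\p^*$, a contradiction. This uses only the finite generation of $M$. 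Your argument instead invokes the prime filtration of Lemma~\ref{corollary the filtration graded}, which is heavier machinery and requires the Noetherian hypothesis (not yet in force at this point in the paper). What you gain is a more structural picture; what the paper gains is an argument that works under the weaker hypothesis $M \in \grmod(A)$ alone.

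For (b), the paper simply cites the ungraded fact $V(N) = V(M) \cup V(P)$ and intersects with graded primes, whereas you rederive it from (a) via exactness of graded localization. Both are fine; the paper's version is shorter, yours is more self-contained within the graded framework.
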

\begin{proof}  Since $V(N) = V(M) \cup V(P)$, b) follows.

For $a)$, it's straightforward to see that the ungraded object $M_{\p} \neq 0$ implies that $M_{[\p^*]} \neq 0$.  If $M_{[\p^*]} \neq 0$, and $Ann_A(M)$ is not contained in $\p^*$, then since both are graded ideals, there exists a homogeneous element $r \in Ann_A(M)$ such that $r \notin \p^*$.  But then, $m/t = 0/r = 0$ for every $m \in M$ and homogeneous $t \notin \p^*$.  Finally, suppose that $Ann_A(M) \subseteq \p^*$, yet $M_{\p} = 0$.   If $x_1, \ldots, x_j$ are homogeneous elements of $M$ generating $M$ as an $A$-module, since $x_i/1= 0$ for every $i$, there exist $s_i \notin \p$ such that $s_ix_i=0$ for each $i$.  We may assume that  each $s_i$ is homogeneous, since $x_i$ is.  Since $s_i \notin \p$, $s_i \notin \p^*$, so that $s = s_1s_2\cdots s_j \notin \p^*$ and is homogeneous.  Furthermore, $sm= 0$ for every $m \in M$, so $s \in \p^*$, a contradiction.  
\end{proof}
\begin{lemma}
For $\p$ a graded prime in $A$, $T$ a GMCS, $(T^{-1} \p)_0=T^{-1}\p \cap (T^{-1}A)_0$, and if $\p \cap T = \emptyset$ then $(T^{-1}\p)_0$ is a prime ideal in $(T^{-1}A)_0$.  
\end{lemma}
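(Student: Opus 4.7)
The plan is to reduce both claims to standard properties of localizations and graded submodules, using the fact that $\p$ graded implies $T^{-1}\p$ is a graded ideal of $T^{-1}A$.

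For the first equality, I would first observe that since $\p$ is a graded ideal of $A$ and every element of $T$ is homogeneous, every element $p/t \in T^{-1}\p$ with $p \in \p$ can be written as $\sum_i p_i / t$ where the $p_i$ are the homogeneous components of $p$ (which lie in $\p$ because $\p$ is graded), and each $p_i/t$ is homogeneous in $T^{-1}A$ of degree $\deg p_i - \deg t$. Hence $T^{-1}\p$ is a graded submodule of $T^{-1}A$, so its degree-zero component agrees with its intersection with $(T^{-1}A)_0$. This is the standard fact that, for a graded submodule $N \subseteq M$ of a graded module, $N_j = N \cap M_j$ for every $j$; applying it at $j=0$ gives $(T^{-1}\p)_0 = T^{-1}\p \cap (T^{-1}A)_0$.

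For the second claim, assume $\p \cap T = \emptyset$. By Lemma~\ref{lemma props of graded localization}(c) (applied to $\p$ itself), $T^{-1}\p$ corresponds to $\p$ under the bijective correspondence between primes of $A$ disjoint from $T$ and primes of $T^{-1}A$, so $T^{-1}\p$ is a proper prime ideal of $T^{-1}A$. Now suppose $x,y \in (T^{-1}A)_0$ with $xy \in (T^{-1}\p)_0$. Then $xy \in T^{-1}\p$, and by primality either $x \in T^{-1}\p$ or $y \in T^{-1}\p$; intersecting with $(T^{-1}A)_0$ and applying the first part of the lemma places that element into $(T^{-1}\p)_0$. Finally, $(T^{-1}\p)_0 \neq (T^{-1}A)_0$ since $1 \notin T^{-1}\p$ (as $T^{-1}\p$ is proper). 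Thus $(T^{-1}\p)_0$ is a proper prime ideal of the ungraded ring $(T^{-1}A)_0$.

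No step here looks technically hard; the only point requiring care is verifying that an arbitrary element of $T^{-1}\p$ (written with a possibly inhomogeneous numerator $p \in \p$) really does decompose into homogeneous pieces that still lie in $T^{-1}\p$, and this is exactly where the hypothesis that $\p$ is a graded ideal is used.
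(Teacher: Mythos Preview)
Your proof is correct. The paper states this lemma without proof (it is grouped with the other routine facts about graded localization that are left to the reader), so there is no argument in the paper to compare against; your approach---observing that $T^{-1}\p$ is a graded submodule of $T^{-1}A$ because $\p$ is graded, and then intersecting the prime $T^{-1}\p$ with the degree-zero subring---is exactly the natural one.
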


\begin{definition} 
\cite{GRO} If $\p \in Spec(A)$, then we denote the degree $0$ part of $M_{[\p]}$ by $M_{(\p)}$. 
\end{definition}

If $M$ is an $A$-module,  $M_{(\p)}$ is an $A_{(\p)}$-module.

\begin{example}  If $A$ is a graded ring,  $\p$ is a graded prime ideal in $A$, and $T$ is the GMCS consisting of the homogeneous elements of $A-\p$, then $T^{-1}\p \doteq \p_{[\p]}$ is a *maximal ideal in $T^{-1}A \doteq A_{[\p]}$ and $\p_{(\p)} = (\p_{[\p]})_0$ is a maximal ideal in $A_{(\p)}$.
\end{example}

Now, if $M$ is a graded $A$-module and $\p$ is a graded prime ideal, we know that the standard localization $M_{\p}$ isn't usually graded as we allow inhomogeneous elements of $A$ not in $\p$ to be inverted.  If $\p$ is a minimal prime ideal for $M$, it must be graded, as we have seen, and from ordinary commutative algebra, we know that $M_{\p}$ has finite length as an $A_{\p}$-module.  But we can also consider the graded localization $M_{[\p]}$ and the comparison between length and *length: 

\begin{theorem}
\label{theorem *composition series exists for Mgrp when p is minimal}  Suppose that $A$ is a Noetherian graded ring.
Let $M \in \grmod(A)$, and $\p$ be a prime minimal over the graded ideal $Ann_A(M)$. Then, a *composition series exists for the graded $A_{[\p]}$-module $M_{[\p]}$.  Moreover, $$ *\ell_{A_{[\p]}}(M_{[\p]}) =\ell_{A_{\p}}(M_{\p}).$$
\end{theorem}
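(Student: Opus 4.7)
\bigskip

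The plan is to exploit the graded filtration from Lemma \ref{corollary the filtration graded} and see what happens when we localize at $T$, the set of homogeneous elements of $A-\p$ (noting $\p=\p^*$ is graded since it is a minimal prime of $M$, by Proposition \ref{lemma Ass is graded}).

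First, apply Lemma \ref{corollary the filtration graded} to obtain a graded filtration
$$0=M^0 \subset M^1 \subset \cdots \subset M^n = M$$
with $M^i/M^{i-1}\cong (A/\p_i)(-d_i)$, where each $\p_i$ is a graded prime and $\{\p_i\}\subseteq {}^*Supp_A(M)$. Graded localization at $T$ is exact and commutes with suspension (Lemma \ref{lemma graded iso of localizing a shifted graded module}), so we get a chain of graded $A_{[\p]}$-submodules
$$0 = M^0_{[\p]} \subseteq M^1_{[\p]} \subseteq \cdots \subseteq M^n_{[\p]} = M_{[\p]}$$
with successive quotients $(A/\p_i)_{[\p]}(-d_i)$.

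Next I would argue that each nonzero subquotient is $*$simple. Since every $\p_i$ contains $Ann_A(M)$ and $\p$ is minimal over $Ann_A(M)$, Lemma \ref{lemma graded localize the quotient neq 0} (its minimal-prime clause) says $(A/\p_i)_{[\p]}\neq 0$ if and only if $\p_i = \p$. For those surviving indices the subquotient equals $(A/\p)_{[\p]}(-d_i)\cong (A_{[\p]}/\p_{[\p]})(-d_i)$, where $\p_{[\p]}$ is a $*$maximal ideal in $A_{[\p]}$ (as noted in the example preceding the theorem). Thus each surviving subquotient is a $*$simple graded $A_{[\p]}$-module. Collapsing the chain by deleting the indices at which equality holds yields a strictly increasing chain with $*$simple graded quotients, i.e.\ a $*$composition series for $M_{[\p]}$, and in particular $*\ell_{A_{[\p]}}(M_{[\p]})<\infty$.

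Finally, the $*$length is the number of surviving indices, namely $\#\{i : \p_i = \p\}$. By the last assertion of Lemma \ref{corollary the filtration graded}, this count is exactly $\ell_{A_\p}(M_\p)$, and it is independent of the chosen filtration. This gives
$$*\ell_{A_{[\p]}}(M_{[\p]}) \;=\; \#\{i : \p_i = \p\} \;=\; \ell_{A_\p}(M_\p),$$
as desired. The only mildly nontrivial point is the $*$simplicity of $(A_{[\p]}/\p_{[\p]})(-d_i)$; this is immediate from the characterization (recalled right after the $*$composition series theorem) that $*$simple modules are exactly the shifts $(A/\mathcal{N})(d)$ for $*$maximal $\mathcal{N}$, combined with the fact that $\p_{[\p]}$ is $*$maximal in $A_{[\p]}$.
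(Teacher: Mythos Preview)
Your proof is correct and follows essentially the same route as the paper: build the graded filtration from Lemma~\ref{corollary the filtration graded}, localize at the homogeneous elements outside $\p$, discard the subquotients that vanish (those with $\p_i\neq\p$, using minimality of $\p$ over $Ann_A(M)$), observe that the surviving subquotients $(A/\p)_{[\p]}(-d_i)$ are $*$simple because $\p_{[\p]}$ is $*$maximal, and then invoke the last clause of Lemma~\ref{corollary the filtration graded} to identify the count with $\ell_{A_\p}(M_\p)$. One small quibble: the ``minimal-prime clause'' of Lemma~\ref{lemma graded localize the quotient neq 0} as stated concerns minimal primes of $A$, not of $M$; what you actually need (and what the paper uses) is the first clause, $(A/\p_i)_{[\p]}\neq 0\iff\p_i\subseteq\p$, combined with $\p_i\supseteq Ann_A(M)$ and minimality of $\p$ over $Ann_A(M)$ to force $\p_i=\p$ --- but your surrounding prose already supplies exactly this reasoning.
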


\begin{proof}
We will produce a *composition series for $M_{[\p]}$, as an $A_{[\p]}$-module and calculate its length.  

Construct a graded filtration $M^{\bullet}$ as in  Lemma \ref{corollary the filtration graded}, and then localize this filtration using the graded localization.  We now have a filtration of $M_{[\p]}$ by graded $A_{[\p]}$-submodules which looks like $0=(M^{0})_{[\p]} \subseteq (M^1)_{[\p]} \subseteq \cdots \subseteq (M)_{[\p]}$.  By exactness of localization and the condition on successive quotients of $M^{\bullet}$ we have that $(M^{i+1}/M^{i})_{[\p]} \cong (A/\p_{i+1}(-d_{i+1}))_{[\p]}$ is a graded isomorphism of $A_{[\p]}$-modules, for appropriate integers $d_i$, where the graded primes $\p_i$ are  as in \ref{corollary the filtration graded}.

There is a graded isomorphism $((A/\p_i)(-d_i))_{[\p]} \cong (A/\p_i)_{[\p]}(-d_i)$, and 
$$(A/\p_i)_{[\p]}(-d_i) \neq 0 \; \mbox{ if and only if} \;  \p = \p_i$$ (by minimality of $\p$). 

In the case that $\p \neq \p_i$,  $(A/\p_i)_{[\p]} =0$  and we have $(M^{i})_{[\p]} = (M^{i-1})_{[\p]}$. Now throw away all such submodules $(M^i)_{[\p]}$ which are equal to the  submodule $(M^{i-1})_{[\p]}$ to get a reduced filtration $((\overline{M}^j)_{[\p]})$ of $M_{[\p]}$, where for each $j$, $(\overline{M}^{j})_{[\p]} \subset (\overline{M}^{j+1})_{[\p]}$ is a strict inclusion, $(\overline{M}^{s})_{[\p]} = M_{[\p]}$ for some $s$, and the zeroth term of the filtration is zero. The claim is that this reduced filtration forms a *composition series for $M_{[\p]}$ of *length equal to the number of times that $A/\p$, shifted,  appeared as a successive quotient in the original filtration $M^{\bullet}$.

For each $j$ the successive quotient $(\overline{M}^{j+1})_{[\p]} / (\overline{M}^{j})_{[\p]}$ is graded isomorphic  to $(A/\p)_{[\p]}(-d_{j+1})$, as an $A_{[\p]}$-module.  But $(A/\p)_{[\p]}$ is a graded field, since $A_{[\p]}$ has a unique graded prime ideal $\p_{[\p]}$; thus,  $(\overline{M}^{j+1})_{[\p]}/(\overline{M}^{j})_{[\p]} \cong (A/\p)_{[\p]}(-d_{j+1})$ is a *simple $A_{[\p]}$-module for each $j$. 

Going back to the original filtration $M^{\bullet}$ and forgetting the grading everywhere, recall that the number of times that $A/\p$ appears as a successive quotient in any finite filtration of $M$ which has successive quotients isomorphic to $A/\q$ for some prime $\q$, graded or not, is always the same, and is equal to $\ell_{A_{\p}}(M_{\p})$.  

\end{proof}

\subsection{*Local rings}

\begin{definition}  If $A$ is a graded ring, then $A$ is *local if and only if there is one and only one *maximal ideal of $A$.
\end{definition}

Some examples of *local rings are immediate.  For example, a graded field is always *local, with unique *maximal ideal $0$.  This shows that generally, a *maximal ideal of a graded ring $A$ may not be a maximal ideal of $A$.  If $\p$ is a graded prime in $A$, then $A_{[\p]}$ is a *local ring with unique *maximal ideal $\p_{[\p]}$. The end of this section gives a partial characterization of *local rings.

Also, as one might expect, if $A$ is a *local graded ring, with unique *maximal ideal $\mathcal{N}$, then
\begin{itemize}
\item  For every proper ideal  $\mathcal{I}$ (graded or not) of $A$, $\mathcal{I}^* \subseteq \mathcal{N}$.
\item Every homogeneous element of $A-\mathcal{N}$ is invertible:  i.e., for every $x \in A-\mathcal{N}$ with $\deg x = d$, there exists a $y \in A-\mathcal{N}$ of degree $-d$ such that $xy = 1 \in A_0$.
\item $A/\mathcal{N}$ is a graded field; also, for every $y \in \mathcal{N}_j$ and every $x \in \mathcal{A}_{-j}$, $1-xy \in A_0$ is a unit in $A_0$. 
\end{itemize}

\begin{lemma} (Graded Nakayama's lemma) \label{lemma graded nakayama}Suppose $(A,\mathcal{N})$ is a *local ring and $M$ is a finitely generated graded $A$-module with $N$ a graded $A$-submodule of $M$.  If $\q$ is a proper graded ideal in $M$,  then $N+ \q M = M$ implies that $M = N$.
\end{lemma}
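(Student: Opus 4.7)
The plan is to follow the classical route for Nakayama's lemma, adapted to the graded setting. First, I would replace $M$ by $M/N$: since $N$ is a graded submodule, $M/N$ is a finitely generated graded $A$-module, and the hypothesis $N + \mathfrak{q}M = M$ translates to $\mathfrak{q}(M/N) = M/N$. So it is enough to prove that if $M \in \grmod(A)$ satisfies $\mathfrak{q} M = M$ for a proper graded ideal $\mathfrak{q}$, then $M = 0$. Note also that, since $A$ is *local with unique *maximal ideal $\mathcal{N}$, every proper graded ideal of $A$ is contained in $\mathcal{N}$, so $\mathfrak{q} \subseteq \mathcal{N}$.

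Suppose for contradiction that $M \neq 0$. Since $M$ is finitely generated and graded, I would choose a \emph{homogeneous} generating set of minimal cardinality $r \geq 1$, say $x_1, \ldots, x_r$, with $\deg(x_i) = e_i$. Using $\mathfrak{q}M = M$, write $x_r = \sum_{i=1}^r a_i x_i$ with each $a_i \in \mathfrak{q}$. Because $x_r$ is homogeneous, I can replace each $a_i$ by the homogeneous component of degree $e_r - e_i$, so each $a_i x_i$ is homogeneous of degree $e_r$. In particular $\deg(a_r) = 0$, so $a_r \in \mathfrak{q} \cap A_0 \subseteq \mathcal{N} \cap A_0 = \mathcal{N}_0$.

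The key step is now to invert $1 - a_r$. By the third bulleted property of *local rings recorded just before the statement (taking $j=0$, $y=a_r \in \mathcal{N}_0$, $x=1$), the element $1-a_r \in A_0$ is a unit in $A_0$, hence a unit in $A$. Rewriting the relation as $(1 - a_r)\,x_r = \sum_{i<r} a_i x_i$ and multiplying by $(1-a_r)^{-1}$, I obtain
\[
x_r = \sum_{i<r} (1-a_r)^{-1} a_i\, x_i,
\]
so $x_1, \ldots, x_{r-1}$ already generate $M$, contradicting the minimality of $r$. Therefore $M = 0$, i.e. $M/N = 0$ in the original notation, giving $M = N$.

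The only delicate point, and the one specific to the graded category, is the unit step: in the ungraded local case one uses that $1 - a_r$ is a unit because $a_r$ lies in the Jacobson radical, but here $\mathcal{N}$ need not be a maximal ideal of $A$, only *maximal. What saves us is that the degree argument forces $a_r$ to land in $\mathcal{N}_0$, and $\mathcal{N}_0$ is a maximal ideal of $A_0$, so $1 - a_r$ is indeed invertible in $A_0$. This is exactly why the homogeneity of the chosen generating set is essential.
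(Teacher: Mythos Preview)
Your proof is correct and follows essentially the same route as the paper: reduce to $N=0$, take a minimal homogeneous generating set, express $x_r$ in terms of the generators with coefficients in $\q \subseteq \mathcal{N}$, observe by degree considerations that $a_r \in \mathcal{N}_0$, and invert $1-a_r$ using the *local hypothesis to contradict minimality. Your treatment is arguably a touch cleaner, since by writing $(1-a_r)x_r = \sum_{i<r} a_i x_i$ you handle the case $a_r = 0$ uniformly, whereas the paper pauses to note that minimality forces $\alpha_r x_r \neq 0$ before concluding $\deg \alpha_r = 0$.
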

\begin{proof} (Slight variation of proof of Nakayama's lemma in \cite{AtMac}.)  We may assume $N= 0$ by passing to $M/N$.  Say $M \neq 0$; choose a homogeneous generating set $x_1, \ldots, x_r$ for $M$ over $A$ with a minimal number $r \geq 1$ of nonzero homogeneous elements.  Suppose that $\q M = M$; then there are homogeneous elements $\alpha_j \in \q \subseteq \mathcal{N}$ such that
$x_r = \alpha_1x_1 + \cdots + \alpha_r x_r$; we must have $\deg \alpha_j + \deg x_j = \deg x_r$ for every $j$ such that $\alpha_j x_j \neq 0$.   By minimality, $\alpha_r x_r\neq 0$ and so  $\deg \alpha_r = 0$.  Using the remarks above, $1-\alpha_r$ is an invertible element of $A_0$.  Thus, we may write $x_r$ as an $A$-linear combination of $x_1, \ldots, x_{r-1}$, contradicting the minimality of $r$.
\end{proof}

\begin{proposition}  \label{proposition maximal equals minimal} If $A$ is *local and Noetherian with unique *maximal ideal $\mathcal{N}$, and $M$ is a nonzero finitely generated graded  $A$-module with $\mathcal{N}$ a minimal prime over $Ann_A(M)$ (equivalently, $*V(M) = \{ \mathcal{N} \}$), then $M$ is a *Artinian $A$-module, and for each $j \in \mathbb{Z}$, $M_j$ is an Artinian $A_0$-module. Furthermore, for each $j \in \mathbb{Z}$,   $\ell_{A_0}M_j \leq *\ell_{A}M.$  If, in addition, there is a homogeneous element of degree 1 (or, equivalently, -1) in $A - \mathcal{N}$, $\ell_{A_0}M_j = *\ell_{A}M$ for every $j$.
\end{proposition}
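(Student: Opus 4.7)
The plan is to prove everything by reducing to the structure of the unique graded residue field $A/\mathcal{N}$ and then evaluating a *composition series for $M$ degree by degree. First, since $*V(M)=\{\mathcal{N}\}$ is a finite set of *maximal ideals, the equivalence (a)$\Leftrightarrow$(b)$\Leftrightarrow$(c) from the earlier lemma characterising *Artinian modules gives at once that $M$ is *Artinian with $n := *\ell_A(M) < \infty$. Moreover, because every *simple $A$-module is a shift of $A/\mathcal{N}'$ for some *maximal $\mathcal{N}'$ and here $\mathcal{N}' = \mathcal{N}$ is the only option, any *composition series
\[
0 = M^0 \subset M^1 \subset \cdots \subset M^n = M
\]
has successive quotients $M^i/M^{i-1} \cong (A/\mathcal{N})(d_i)$ for integers $d_i$.

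Next, I would fix $j \in \mathbb{Z}$ and take the degree-$j$ piece of this filtration. Since morphisms in $\grmod(A)$ preserve degree, each $M^i_j$ is an $A_0$-submodule of $M_j$, the inclusions $M^{i-1}_j \subseteq M^i_j$ are $A_0$-linear, and
\[
M^i_j / M^{i-1}_j \;\cong\; \bigl(M^i/M^{i-1}\bigr)_j \;\cong\; (A/\mathcal{N})_{j+d_i}
\]
as $A_0$-modules. Now I apply Theorem~\ref{theorem gr field} to the graded field $A/\mathcal{N}$: either $A/\mathcal{N} = (A/\mathcal{N})_0$ or $A/\mathcal{N} \cong F_0[t,t^{-1}]$ with $F_0 = (A/\mathcal{N})_0 = A_0/\mathcal{N}_0$ a field and $\deg(t) = d > 0$ the smallest positive degree represented. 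In both cases each graded component $(A/\mathcal{N})_k$ is either $0$ or a one-dimensional $F_0$-vector space, and $F_0 = A_0/\mathcal{N}_0$ is an actual residue field of $A_0$ because $\mathcal{N}_0$ is maximal in $A_0$ (remarked earlier in the section). So every successive quotient $M^i_j / M^{i-1}_j$ is either $0$ or a simple $A_0$-module, which simultaneously shows that $M_j$ has finite $A_0$-length (hence is Artinian over $A_0$) and gives the inequality $\ell_{A_0}(M_j) \leq n = *\ell_A(M)$.

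For the final equality, I would observe that the existence of a homogeneous element of $A-\mathcal{N}$ of degree $1$ means exactly that $(A/\mathcal{N})_1 \neq 0$. Since $A/\mathcal{N}$ is a graded field, such an element is invertible and its inverse lies in degree $-1$ (which is why the two formulations are equivalent). By Theorem~\ref{theorem gr field} the smallest positive degree $d$ of $A/\mathcal{N}$ must then be $1$, so $(A/\mathcal{N})_k \cong F_0$ for every $k \in \mathbb{Z}$. Feeding this back into the degree-$j$ filtration, every subquotient $M^i_j/M^{i-1}_j$ is nonzero and simple over $A_0$, which forces $\ell_{A_0}(M_j) = n = *\ell_A(M)$.

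The main obstacle I anticipate is not conceptual but bookkeeping: I need to be careful that taking the degree-$j$ component commutes with the subquotients of a filtration by graded submodules (true because the embedding of a graded component into the module is an exact functor on $\grmod(A)$), and that the identification $(A/\mathcal{N})_0 = A_0/\mathcal{N}_0$ is indeed an $A_0$-module identification making the subquotients either zero or simple. Once those identifications are explicit, the proof is just the two-line structural observation about $A/\mathcal{N}$ applied componentwise.
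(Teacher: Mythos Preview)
Your argument is correct and follows essentially the same route as the paper's proof: take a *composition series for $M$, observe that its subquotients are shifts of the graded field $A/\mathcal{N}$, pass to the degree-$j$ part, and use the structure of $A/\mathcal{N}$ (each graded piece is zero or one-dimensional over $A_0/\mathcal{N}_0$, and all pieces are nonzero exactly when the minimal positive degree is $1$) to read off the $A_0$-length of $M_j$. Your write-up is in fact somewhat more explicit than the paper's about invoking Theorem~\ref{theorem gr field} and about why taking degree-$j$ components commutes with subquotients, but the underlying idea is identical.
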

\begin{proof}  $M$ is *Artinian, since $*V(M) = \{\mathcal{N}\}$.  In fact, in this case, $M$ has a *composition series with the property that each successive quotient is annihilated by $\mathcal{N}$ and is also free of rank one over the graded field $A/\mathcal{N}$.  Taking the degree $j$ part of each module in this *composition series, we get a chain of $A_0$-submodules of $M_j$ and  the dimension of each successive quotient over the field $K \doteq (A/\mathcal{N})_0 \doteq A_0/\mathcal{N}_0$ is either zero or 1.  Thus, since $\mathcal{N}_0$ also annihilates each successive quotient in this ``degree j" filtration, we see that we can make appropriate deletions in the ``degree j" part of the *composition series for $M$ to yield a composition series for $M_j$ over $A_0$ of length less than or equal to $*\ell_A(M)$.

For the last statement, supposing that there is a  homogeneous element of degree 1  in $A-\mathcal{N}$, then there are nonzero elements of every degree in the graded $A$-module $(A/\mathcal{N})(d)$, for every $d \in \mathbb{Z}$; to see this, note that $A/\mathcal{N}$ is a graded field, equal to $K[T, T^{-1}]$, where $T \neq 0$ has least positive degree in $A/\mathcal{N}$, namely degree 1.  So, each successive quotient in the *composition series for $M$ is nonzero in every degree. After taking the ``degree j" part of this *composition series, each quotient must be of rank 1 over $K$.  Thus the equality holds.

\end{proof}

\begin{corollary}  \label{corollary length and *length at minimal prime}Suppose that $A$ is a Noetherian graded ring.
Let $M \in \grmod(A)$, and $\p$ be a prime minimal over $Ann_A(M)$, necessarily graded. Then,  $M_{[\p]}$ is an *Artinian $A_{[\p]}$-module and $M_{(\p)}$ is an Artinian $A_{(\p)}$-module.  Also,
$$\ell_{A_{(\p)}} ( M_{(\p)} )\leq *\ell_{A_{[\p]}}(M_{[\p]}) = \ell_{A_{\p}}(M_{\p}).$$  In addition, if there is a homogeneous element of degree 1 (or -1) in $A-\p$, then 
$$\ell_{A_{(\p)}} ( M_{(\p)} )= *\ell_{A_{[\p]}}(M_{[\p]}) = \ell_{A_{\p}}(M_{\p}).$$
\end{corollary}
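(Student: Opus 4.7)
The plan is to reduce everything to the already-proved Theorem \ref{theorem *composition series exists for Mgrp when p is minimal} and Proposition \ref{proposition maximal equals minimal} applied to the graded localization $(A_{[\p]}, \p_{[\p]})$. By Lemma \ref{lemma props of graded localization}, $A_{[\p]}$ is Noetherian and $M_{[\p]} \in \grmod(A_{[\p]})$; by the discussion following the definition of *local rings, $A_{[\p]}$ is *local with unique *maximal ideal $\p_{[\p]}$. Theorem \ref{theorem *composition series exists for Mgrp when p is minimal} then directly supplies both the existence of a *composition series for $M_{[\p]}$ (so $M_{[\p]}$ is *Artinian of finite *length) and the rightmost equality $*\ell_{A_{[\p]}}(M_{[\p]}) = \ell_{A_\p}(M_\p)$.

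To invoke Proposition \ref{proposition maximal equals minimal} for $M_{[\p]}$ over $A_{[\p]}$, I need to check that $\p_{[\p]}$ is a minimal prime over $Ann_{A_{[\p]}}(M_{[\p]})$, i.e., $*V(M_{[\p]}) = \{\p_{[\p]}\}$. Since $M$ is finitely generated, $Ann_{A_{[\p]}}(M_{[\p]}) = (Ann_A M)_{[\p]}$. Under the inclusion-preserving bijection of Lemma \ref{lemma props of graded localization} c), the graded primes of $A_{[\p]}$ correspond to graded primes of $A$ disjoint from the GMCS $T$ of homogeneous elements of $A - \p$; because a graded prime $\q$ is generated by its homogeneous elements, $\q \cap T = \emptyset$ is equivalent to $\q \subseteq \p$. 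Graded primes of $A_{[\p]}$ containing $Ann_{A_{[\p]}}(M_{[\p]})$ therefore correspond to graded primes of $A$ sitting between $Ann_A(M)$ and $\p$; by the minimality hypothesis there is only one such prime, namely $\p$ itself, so $*V(M_{[\p]}) = \{\p_{[\p]}\}$.

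With the hypothesis verified, Proposition \ref{proposition maximal equals minimal} applied to $(A_{[\p]}, \p_{[\p]})$ and $M_{[\p]}$ gives that $(M_{[\p]})_0 = M_{(\p)}$ is an Artinian $(A_{[\p]})_0 = A_{(\p)}$-module with $\ell_{A_{(\p)}}(M_{(\p)}) \leq *\ell_{A_{[\p]}}(M_{[\p]})$, yielding the asserted chain of inequalities once combined with the equality from Theorem \ref{theorem *composition series exists for Mgrp when p is minimal}. For the final assertion, if $a \in A-\p$ is homogeneous of degree $1$, then $a/1 \in A_{[\p]}$ is homogeneous of degree $1$ and, lying in $T$, is a unit in $A_{[\p]}$, hence not in $\p_{[\p]}$; this is exactly the hypothesis needed to activate the "if, in addition" clause of Proposition \ref{proposition maximal equals minimal}, which promotes the inequality to an equality.

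The only step with any substance is the verification $*V(M_{[\p]}) = \{\p_{[\p]}\}$ — everything else is bookkeeping that packages the previously established results. I do not expect any serious obstacle, since the minimality of $\p$ together with the standard graded prime correspondence makes this transparent.
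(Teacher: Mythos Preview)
Your proposal is correct and follows precisely the route the paper intends: the corollary is stated without its own proof because it is the direct combination of Theorem~\ref{theorem *composition series exists for Mgrp when p is minimal} (giving the *Artinian property of $M_{[\p]}$ and the equality $*\ell_{A_{[\p]}}(M_{[\p]}) = \ell_{A_{\p}}(M_{\p})$) with Proposition~\ref{proposition maximal equals minimal} applied to the *local ring $(A_{[\p]},\p_{[\p]})$ (giving the Artinian property of $M_{(\p)}$, the inequality, and the equality under the degree~$1$ hypothesis). Your explicit verification that $*V(M_{[\p]}) = \{\p_{[\p]}\}$ is exactly the small bookkeeping step needed to make Proposition~\ref{proposition maximal equals minimal} applicable, and it is handled correctly via the graded prime correspondence of Lemma~\ref{lemma props of graded localization}~c).
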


In ending this section, we point out that *local graded rings are often graded localizations of positively (or negatively) graded rings at graded prime ideals.  

Suppose that  $(A, \mathcal{N})$ is a *local ring.  Then, there exists a  homogeneous element of strictly positive degree in $A-\mathcal{N}$ if and only if there exists a  homogeneous element of strictly negative degree in $A-\mathcal{N}$:  if $s \in A-\mathcal{N}$ is homogeneous of degree $d >0$ then, since every homogenous element of $A-\mathcal{N}$ is invertible, there exists a $t \in A-\mathcal{N}$ that is homogeneous and $st=1 \in A_0$.  Necessarily, the degree of $t$ is $-d$. Since the argument is reversible, we have the conclusion.

Thus, we have alternatives:
\begin{itemize}
\item  There exist  homogeneous elements of $A-\mathcal{N}$ in at least one strictly positive degree and at least one strictly negative degree.  The analysis of this alternative is given below and we see that $A$ is a graded localization of a positively graded ring at a graded prime ideal.
\item $A$ is a positively or negatively graded ring:  in this case, $A$ is the localization of itself (a positively or negatively graded ring) at the *maximal ideal $\mathcal{N}$ since we've seen that  $\mathcal{N}_0$ is the unique maximal ideal in $A_0$ and $A_e = \mathcal{N}_e$ for all $e \neq 0$.
\item  $A$ has nonzero elements of both positive and negative degree,  $\mathcal{N}_d = A_d$ for all $d \neq 0$ and $\mathcal{N}_0$ is the unique maximal ideal of $A_0$.  In this case, since $\mathcal{N}$ is an ideal, we must have $A_d A_{-d} \subseteq \mathcal{N}_0$, for all $d \neq 0$.    As an example, consider $A = k[s,t]/(st)$ where $k$ is a field (all elements of degree 0), the degree of $s$ is one and the degree of $t$ is -1.  In this case, one might not be able to obtain $A$ as a graded localization of a positively (or negatively) graded object at a graded prime ideal.  But we don't fully analyze this case here. \end{itemize}

Anyway, in the case of the first of the alternatives, let $$S(A) = \oplus_{d \geq 0} A_d$$ be the ``positive part" of $A$, graded with the natural grading; this too is a graded ring, and it is certainly positively graded.  Considering the graded abelian subgroup $S(\mathcal{N}) = \oplus_{d \geq 0} \mathcal{N}_d$ of $S(A)$, we see that it is  a graded prime ideal in $S(A)$.  We claim that $S(A)_{[S(\mathcal{N})]}$ is isomorphic as a graded ring to $A$, with $\mathcal{N}$ corresponding to $S(\mathcal{N})_{[S(\mathcal{N})]}$, under the well-defined injective homomorphism of graded rings defined by  $a/b \mapsto ab^{-1}$, if $a \in S(A)_d$ and $b \in A_e-\mathcal{N}_e$ for $d, e \geq 0$.  To see that the homomorphism is surjective, suppose that $x$ is a homogeneous element of degree $j$ in $A$.  If $j \geq 0$, $x/1 \mapsto x$, and $x/1 \in S(A)_{[S(\mathcal{N})]}$.  If $j<0$, the assumption of the first alternative says that there is a homogeneous element $t \in A-\mathcal{N}$ with $\deg(t) = k >0$.  Then, there is a positive integer $l$ such that $lk+j >0$ so that $t^lx/t^l \in S(A)_{[S(\mathcal{N})]}$ and $t^lx/t^l \mapsto x$.

\section{Krull Dimension in grmod(A)}

\begin{remark} From now on, we assume that $A$ is a Noetherian graded ring, unless explicitly stated otherwise. \end{remark}

\label{Dimension in C(A)}

The height of a prime ideal $\p$ of $A$, graded or not,  is  defined as usual:  $\height({\p})$ is the longest length $n$ (which always exists, using the Noetherian hypothesis) of a chain of primes $\p_0 \subset \cdots \subset \p_n=\p$; thus  we define the graded height $\sheight({\p})$ of a graded prime ideal $\p$ in the ring $A$, as the longest length $m$ (which always exists, using the Noetherian hypothesis) of a chain of graded primes $\p_0 \subset \cdots \subset \p_m=\p$.   For every graded prime $\p$, $\height(\p) \geq \sheight(\p)$.

Forgetting the grading on $A$ and $M$,  one defines the Krull dimension of a graded $A$-module $M$ as usual; here this is denoted by $\dim_A(M)$.  As usual, $\dim(A) \doteq \dim_A(A).$ The \textbf{graded Krull dimension} of a graded $A$-module $M$, $\sdim_A(M)$, is the greatest $D$ such that there exists a strictly increasing chain
$$\mathfrak{p}_0 \subset \ldots \subset \mathfrak{p}_D$$ of graded prime ideals in $A$ such that $Ann_A(M) \subseteq \mathfrak{p}_0$. If no such greatest $D$ exists, $M$ has infinite graded Krull dimension.  For the zero module, we define $\sdim(0) = -\infty.$
By definition, $\sdim(A) \doteq \sdim_A(A).$  For any graded $A$-module $M$,
\begin{itemize}
\item $\sdim_A(M) \leq \dim_A(M)$.
\end{itemize}
Also, since $Ann_A(M) = Ann_A(M(n))$, for every $n \in \mathbb{Z}$,
\begin{itemize}
\item $\dim_A(M) = \dim_A(M(n)), $ for every $n \in \mathbb{Z}$.
\item $\sdim_A(M) = \sdim_A(M(n)),$ for every $n \in \mathbb{Z}$.
\end{itemize}

\begin{example} \label{example graded Krull dimension of Laurent polynomial ring}

Let  $F$ be a graded field of the form $F \cong F_0[t,t^{-1}]$, where $\deg(t)>0$. The only graded prime in $F$ is $0$, so that $\sdim(F)=0$.  On the other hand, $\dim(F)=1.$
\label{laurentex}
\end{example}

More generally, 

\begin{example}If $A$ is has only one graded prime ideal $\mathcal{N}$, then $A$ is a *local, *Artinian ring with $\sdim(A) = 0$, and $A_0$ is an Artinian ring of Krull dimension zero with unique nilpotent maximal ideal $\mathcal{N}_0$.
\end{example}

Most of the proofs for the following lemma may be found in \cite{brhe}.
\begin{lemma} Suppose that $\p \in$ Spec($A$).  ($\p$ may or may not be graded.) We know that $\p$ has finite height; say $\height(\p)=d$.
\label{mainlemma6.2}
\begin{itemize}
\item[i)] If $\q \in Spec(A)$ and $\p^* \subseteq \q \subseteq \p$ then either $\q = \p$ or $\q = \p^*$.
\item[ii)] There exists a chain of primes $\q_0 \subset \cdots \subset \q_d = \p$, such that $\q_0, \cdots , \q_{d-1}$ are all graded.
\item[iii)] If $\p$ is graded then there exists a chain of graded prime ideals such that $$\p_0 \subset \p_1 \subset \cdots \subset \p_d = \p \text{,}$$ so that $\height(\p) = \sheight(\p)$.
\item[iv)] If $\p$ is not graded $(\p^*$ is a proper subset of $\p)$, then $$\height(\p) = \height(\p^*)+1 = \sheight(\p^*)+1 \text{.}$$
\end{itemize}
\end{lemma}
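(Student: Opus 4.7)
I will prove the parts in the order (i), (iii), (iv), (ii), using throughout the standard fact that $\p^*$ is itself prime: if $x,y$ are homogeneous with $xy\in\p^*\subseteq\p$, then one of them is in $\p$ and, being homogeneous, in $\p^*$.

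For (i), pass to $B = A/\p^*$, a graded integral domain in which $\bar\p = \p/\p^*$ satisfies $\bar\p^* = 0$. The GMCS $T$ of nonzero homogeneous elements of $B$ is disjoint from $\bar\p$, and in $T^{-1}B$ every nonzero homogeneous element is a unit, so Theorem~\ref{theorem gr field} identifies $T^{-1}B$ with $F_0$ or $F_0[t,t^{-1}]$ --- a principal ideal domain either way, in which every nonzero prime has height one. Hence no prime lies strictly between $0$ and $T^{-1}\bar\p$, and the prime correspondence of Lemma~\ref{lemma props of graded localization} rules out any prime of $A$ strictly between $\p^*$ and $\p$.

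For (iii), I proceed by induction on $d=\height(\p)$; the cases $d\leq 1$ are trivial. For $d=2$, pick a graded minimal prime $\p_0$ of $A$ at the bottom of a maximal chain to $\p$ and a homogeneous $f\in\p\setminus\p_0$, and let $\p_1$ be a minimal graded prime of $(\p_0,f)$ contained in $\p$ (graded by Proposition~\ref{lemma Ass is graded}~(iv)); then $\p_1\subsetneq\p$ (else $\bar\p$ would be minimal over $(\bar f)$ in $A/\p_0$, forcing $\height_{A/\p_0}(\bar\p)\leq 1$ by Krull's Hauptidealsatz and contradicting the length-$2$ chain through $\p_0$), so $\height(\p_1)=1$ and the chain is built. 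For $d\geq 3$, take a maximal chain $Q_0\subsetneq\cdots\subsetneq Q_d=\p$; by (i) and the saturation of the chain, either $Q_{d-1}$ is graded and the inductive hypothesis applies directly, or $Q_{d-1}$ is non-graded and $Q_{d-2}=Q_{d-1}^*$ is graded of height $d-2$, in which case combining the inductive hypothesis on $Q_{d-2}$ with the $d=2$ construction applied to $\p/Q_{d-2}$ in $A/Q_{d-2}$ yields the graded chain of length $d$.

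For (iv), assume $\p$ is non-graded. The proof of (i) shows that for any $x\in\p\setminus\p^*$, $\bar\p$ is minimal over $(\bar x)$ in $A/\p^*$, equivalently $\p$ is a minimal prime of $\p^*+(x)$ in $A$. Writing $\p^*$ as a minimal prime of $(g_1,\ldots,g_h)$ with homogeneous $g_i$ and $h=\height(\p^*)$ (via (iii) and the graded form of Krull's altitude theorem, proved by standard graded prime avoidance), I claim $\p$ is a minimal prime of $(g_1,\ldots,g_h,x)$: any prime $P\subsetneq\p$ containing this ideal contains a minimal prime $Q$ of $(g_1,\ldots,g_h)$, which is graded (Proposition~\ref{lemma Ass is graded}~(iv)) with $Q\subseteq\p$, so $Q\subseteq\p^*$, and minimality of $Q$ and $\p^*$ forces $Q=\p^*$; then $P\supseteq\p^*+(x)$, and the minimality of $\p$ over $\p^*+(x)$ forces $P=\p$, a contradiction. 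By Krull's altitude theorem applied to the $(h+1)$-generated ideal $(g_1,\ldots,g_h,x)$, $\height(\p)\leq h+1$, while the chain $\p^*\subsetneq\p$ gives the reverse, so $\height(\p^*)=d-1$; combined with $\sheight(\p^*)=\height(\p^*)$ from (iii), this proves (iv). Finally, (ii) is immediate: for graded $\p$ apply (iii) directly, and for non-graded $\p$, (iv) gives $\p^*$ graded of height $d-1$, so (iii) yields a graded chain of length $d-1$ to $\p^*$ which extends by $\p$. The main obstacle is the claim in (iv) that $\p$ is a minimal prime of $(g_1,\ldots,g_h,x)$, where the graded structure is indispensable: it is the fact that competing minimal primes of $(g_1,\ldots,g_h)$ inside $\p$ must be graded and hence contained in $\p^*$ that eliminates the alternative to $Q=\p^*$.
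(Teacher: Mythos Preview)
The paper does not actually prove this lemma; it simply refers the reader to Bruns--Herzog. So there is no in-paper approach to compare against, only the question of whether your argument stands on its own.

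Your proofs of (i), (iv) and (ii) are correct (modulo their dependence on (iii)). The problem is the inductive step in (iii) for $d \geq 3$: you assert that if $Q_{d-1}$ is non-graded then $Q_{d-2} = Q_{d-1}^*$, citing (i) and saturation. But (i) only says that nothing lies strictly between $Q_{d-1}^*$ and $Q_{d-1}$; it does \emph{not} say that $Q_{d-1}^*$ is the unique prime immediately below $Q_{d-1}$, and there is no reason for $Q_{d-2}$ to be comparable with $Q_{d-1}^*$ at all. Concretely, in $A = k[x,y,z]$ with the standard grading, take $\p = (x,y,z)$ and the saturated chain
\[
0 \;\subsetneq\; (y - z^2) \;\subsetneq\; (x,\, y - z^2) \;\subsetneq\; (x,y,z).
\]
Here $Q_2 = (x, y-z^2)$ is non-graded with $Q_2^* = (x)$, while $Q_1 = (y - z^2) \neq (x)$.

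The fix is to run the induction from the bottom rather than the top. Your $d=2$ argument really proves the following more general fact: if $\p_0 \subsetneq \p_1 \subsetneq \p_2$ are primes with $\p_0$ graded, then there is a graded prime strictly between $\p_0$ and $\p_2$. Indeed, if $\p_2^* \supsetneq \p_0$, choose homogeneous $f \in \p_2^* \setminus \p_0$ and take a minimal prime $\p_1'$ over $(\p_0,f)$ inside $\p_2$; it is graded, strictly contains $\p_0$, and is strictly contained in $\p_2$ since in $A/\p_0$ one has $\height(\bar\p_1') \leq 1 < 2 \leq \height(\bar\p_2)$ by Krull's Hauptidealsatz. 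If instead $\p_2^* = \p_0$, then (i) forbids any prime strictly between $\p_0$ and $\p_2$, contradicting the existence of $\p_1$. Now take any maximal chain $Q_0 \subsetneq \cdots \subsetneq Q_d = \p$; the bottom $Q_0$ is a minimal prime of $A$, hence graded, and having made $Q_0,\ldots,Q_{i-1}$ graded with $i \leq d-1$, apply the fact above to $Q_{i-1} \subsetneq Q_i \subsetneq Q_{i+1}$ to replace $Q_i$ by a graded prime. This yields (ii) directly, and (iii) follows at once since $Q_d = \p$ is graded by hypothesis. This bottom-up modification is the argument in Bruns--Herzog.
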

\begin{corollary} \label{corollary difference Krull dim Z graded} If $A$ is a graded Noetherian ring, then $$\sdim(A) \leq \dim(A) \leq \sdim(A) +1;$$therefore if $M \in \grmod(A)$,
$$\sdim_A(M) \leq \dim_A(M) \leq \sdim_A(M) +1.$$
\end{corollary}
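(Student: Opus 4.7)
The plan is to reduce everything to the inequalities about heights of primes given in Lemma \ref{mainlemma6.2}, and then pass from the ring statement to the module statement by quotienting by the (graded) annihilator.

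First I would handle the statement for the ring $A$. The inequality $\sdim(A) \leq \dim(A)$ is immediate: every chain of graded primes is in particular a chain of primes, so the supremum over graded chains is at most the supremum over all chains. For the other inequality, I would fix any prime $\p \in \mathrm{Spec}(A)$ and show $\height(\p) \leq \sdim(A)+1$; taking the supremum over $\p$ then yields $\dim(A) \leq \sdim(A)+1$, and this works whether or not $\dim(A)$ is finite. To bound $\height(\p)$, split into cases using Lemma \ref{mainlemma6.2}. If $\p$ is graded, part (iii) gives $\height(\p) = \sheight(\p) \leq \sdim(A)$. If $\p$ is not graded, part (iv) gives $\height(\p) = \sheight(\p^*) + 1 \leq \sdim(A) + 1$. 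Either way, the required bound holds.

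Next I would derive the module statement from the ring statement. By Proposition \ref{lemma Ass is graded}(i), $Ann_A(M)$ is a graded ideal, so the quotient $\bar{A} \doteq A/Ann_A(M)$ is again a graded Noetherian ring. By definition, graded (resp.\ ungraded) prime chains in $A$ containing $Ann_A(M)$ correspond bijectively, preserving length, to graded (resp.\ ungraded) prime chains in $\bar{A}$. Hence $\sdim_A(M) = \sdim(\bar{A})$ and $\dim_A(M) = \dim(\bar{A})$, and applying the ring inequality to $\bar{A}$ yields
\[
\sdim_A(M) = \sdim(\bar A) \leq \dim(\bar A) = \dim_A(M) \leq \sdim(\bar A) + 1 = \sdim_A(M) + 1.
\]

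There is not really a main obstacle here; the whole argument is a bookkeeping reduction to Lemma \ref{mainlemma6.2}. The one place where I would be slightly careful is in handling the case $\dim(A) = \infty$: rather than pick a prime achieving the dimension (which may not exist), I would argue pointwise, showing $\height(\p) \leq \sdim(A)+1$ for every $\p$ and then taking the supremum, so that the inequality makes sense in $\mathbb{N} \cup \{\infty\}$ with no further effort.
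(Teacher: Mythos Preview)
Your proof is correct and follows essentially the same approach as the paper: both reduce to the height bounds of Lemma~\ref{mainlemma6.2}, and your passage to $A/Ann_A(M)$ makes explicit what the paper leaves implicit in the word ``therefore.'' Your pointwise argument for the upper bound is slightly cleaner than the paper's, which splits into the finite and infinite cases separately, but the substance is the same.
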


\begin{proof}  If $A$ has finite Krull dimension, the first inequality is always true; also, there is a maximal ideal $\m$ of $A$, not necessarily graded, such that $\height(\m) = \dim(A)$.  But then $\dim(A) = \height(\m) \leq \sheight(\m^*) +1 \leq \sdim(A) +1.$  If $A$ does not have finite Krull dimension, then for every positive integer $e$ there is a prime ideal  $\p$ of height  larger than $e$.  But then $\sheight(\p^*)$ is larger than $e-1$, so $\sdim(A) $ is infinite as well.
\end{proof}

\subsubsection{Krull dimension for modules over positively graded rings}
\label{Krull dimension for modules over positively graded rings}

\begin{remark} The results in this section also hold for negatively graded rings, after changing definitions appropriately. \end{remark}

\begin{definition}  If $S$ is a positively graded ring,  $$Proj(S) \doteq \{ \p \in \mbox{Spec}(S) \mid \p \; \mbox{is graded and} \; S_+ \not \subseteq \p \}.$$
  \end{definition}

Note that if $\p \in Proj(S)$, then the set of homogeneous elements of $S-\p$ has at least one nonzero element of strictly positive degree.  We've noted that  $\mathcal{N}$ is a *maximal ideal in $S$ if and only if $\mathcal{N} = \mathcal{N}_0 \oplus S_+$, with $\mathcal{N}_0$ a maximal ideal in $S_0$.  Thus, $Proj(S)$ contains no *maximal ideals.
 
 For positively graded rings, there is no difference between $\sdim$ and $\dim$:
 
\begin{lemma}
Let the ring $S$ be a positively graded ring of finite Krull dimension and $M \in \grmod(S)$.   Then,
\begin{itemize}
\item[i)] $\dim(S) = \sdim(S)$; therefore,
\item[ii)] $\dim_S(M) = \sdim_S(M)$.
\end{itemize}
\label{maintheorem6.2}
\end{lemma}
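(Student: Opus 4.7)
The plan is to invoke Corollary \ref{corollary difference Krull dim Z graded} to reduce to proving the single inequality $\dim(S) \leq \sdim(S)$, and then to build an explicit chain of graded primes of maximal length. The argument will crucially use the fact, recorded in the subsection on positively or negatively graded rings, that in a positively graded ring every *maximal ideal is actually maximal.

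First I would reduce (ii) to (i). Since $Ann_S(M)$ is a graded ideal (Proposition \ref{lemma Ass is graded}), the quotient $S/Ann_S(M)$ is again a positively graded Noetherian ring. Under the usual correspondence, graded (respectively, arbitrary) primes of $S$ containing $Ann_S(M)$ correspond bijectively to graded (respectively, arbitrary) primes of $S/Ann_S(M)$, so $\sdim_S(M) = \sdim(S/Ann_S(M))$ and $\dim_S(M) = \dim(S/Ann_S(M))$. Thus (ii) follows from (i) applied to $S/Ann_S(M)$ (which inherits finite Krull dimension from $S$).

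For (i), set $d = \dim(S)$. By Corollary \ref{corollary difference Krull dim Z graded} we have $\sdim(S) \leq d$, so it suffices to exhibit a strictly increasing chain of $d+1$ graded primes. Choose a prime $\p$ with $\height(\p)=d$. If $\p$ happens to be graded, then Lemma \ref{mainlemma6.2}(iii) directly provides a chain of graded primes of the right length. Otherwise $\p^* \subsetneq \p$, and Lemma \ref{mainlemma6.2}(iv) gives $\sheight(\p^*)=d-1$, so we can select a chain of graded primes $\p_0 \subset \p_1 \subset \cdots \subset \p_{d-1}=\p^*$.

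The key claim is that $\p^*$ cannot be *maximal. Indeed, in a positively graded ring every *maximal ideal has the form $\m_0 \oplus S_+$ for a maximal $\m_0 \subset S_0$ and is therefore a maximal ideal of $S$; if $\p^*$ were *maximal it would then be maximal, contradicting $\p^* \subsetneq \p$. Since $\p^*$ is a proper graded ideal, it is contained in some *maximal ideal $\mathcal{M}$, and by the previous sentence the containment must be strict. Appending $\mathcal{M}$ to the chain above yields a strictly increasing chain of $d+1$ graded primes, so $\sdim(S) \geq d$, completing the proof.

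The main step to worry about is the claim that $\p^*$ admits a strict *maximal overideal — but this reduces to two facts already in hand, namely that every proper graded ideal is contained in a *maximal ideal, and that in the positively graded setting *maximal ideals coincide with maximal ideals. Once this identification is invoked the proof is essentially bookkeeping, and no further input from the positive grading beyond these two observations is needed.
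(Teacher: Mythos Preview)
The paper states this lemma without proof; it is presented as a standard fact (the preceding Lemma~\ref{mainlemma6.2} is attributed to \cite{brhe}, and this lemma is recorded in the same spirit). So there is no ``paper's own proof'' to compare against.

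Your argument is correct and uses exactly the tools the paper has set up. The reduction of (ii) to (i) via $S/Ann_S(M)$ is standard. For (i), the crucial observation is the one you isolate: in a positively graded ring, *maximal ideals are genuinely maximal (stated in the paper in the subsection on positively or negatively graded rings). Hence if a prime $\p$ of maximal height $d$ is not graded, $\p^*$ cannot be *maximal, and you can strictly extend the graded chain of length $d-1$ ending at $\p^*$ by one more graded prime. Together with Lemma~\ref{mainlemma6.2}(iii) for the graded case and Corollary~\ref{corollary difference Krull dim Z graded} for the opposite inequality, this closes the argument cleanly. Nothing is missing.
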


For graded localizations of positively graded rings, the following is well-known:

\begin{theorem} \label{theorem krull dimension localized Z graded} Suppose that $S$ is a positively graded  ring of finite Krull dimension, and $\p$ is a graded prime ideal of $S$.  Then, if $S_+ \subseteq \p$, $\dim(S_{[\p]}) = \sdim(S_{[\p]})$, and if $S_+ \not \subseteq \p$, $\dim(S_{[\p]}) = \sdim(S_{[\p]}) + 1.$
\end{theorem}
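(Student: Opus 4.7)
The plan is to translate the dimension question into one about prime ideals of $S$ via Lemma \ref{lemma props of graded localization}(c), and then use Lemma \ref{mainlemma6.2} to compare $\height$ and $\sheight$ for primes with $\q^* \subseteq \p$. Let $T$ denote the GMCS of homogeneous elements of $S - \p$. Graded primes of $S_{[\p]}$ correspond bijectively to graded primes of $S$ contained in $\p$, and arbitrary primes of $S_{[\p]}$ correspond bijectively to arbitrary primes $\q$ of $S$ with $\q \cap T = \emptyset$, i.e.\ with $\q^* \subseteq \p$. Heights are preserved by the correspondence, since if $\q^* \subseteq \p$ then every prime of $S$ below $\q$ also has its graded part inside $\p$. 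Thus $\sdim(S_{[\p]}) = \sheight(\p)$, and $\dim(S_{[\p]}) = \max\{\height(\q) : \q \in \mathrm{Spec}(S),\ \q^* \subseteq \p\}$. Using Lemma \ref{mainlemma6.2}(iv) for non-graded $\q$ (and trivially otherwise), $\height(\q) \leq \sheight(\q^*) + 1$; and $\sheight(\q^*) < \sheight(\p)$ whenever $\q^* \subsetneq \p$, because any chain of graded primes of length $\sheight(\q^*)$ ending at $\q^*$ can be extended by appending $\p$. Together these give $\sdim(S_{[\p]}) \leq \dim(S_{[\p]}) \leq \sdim(S_{[\p]}) + 1$, with the upper bound attained exactly when there is a \emph{non-graded} prime $\q$ of $S$ with $\q^* = \p$.

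Next I would dispatch the case $S_+ \subseteq \p$: any prime $\q \supseteq \p$ then contains $S_+$, so for every $s \in \q$ the positive-degree homogeneous components of $s$ lie in $S_+ \subseteq \q$, and the degree-zero component $s_0$ is therefore also in $\q$; thus all homogeneous components of $s$ belong to $\q$, so $\q$ is graded. Consequently, a prime with $\q^* = \p$ must equal $\p$, ruling out the non-graded candidate and forcing $\dim(S_{[\p]}) = \sdim(S_{[\p]})$ by the discussion above.

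For the case $S_+ \not\subseteq \p$, the task is to exhibit a non-graded prime of $S$ lying strictly above $\p$ but picking up no new homogeneous elements. The key trick is to pass to $\bar S \doteq S/\p$, a graded domain in which, by hypothesis, there is a nonzero homogeneous element $\bar{s}$ of strictly positive degree. Let $T$ be the GMCS of nonzero homogeneous elements of $\bar S$ and form $T^{-1}\bar S$; every nonzero homogeneous element in this ring is invertible, so by Theorem \ref{theorem gr field} it is a graded field, and the presence of $\bar{s}/1$ of positive degree forces it to have the form $K[u, u^{-1}]$ with $\deg(u) > 0$. As in Example \ref{example graded field}, such a ring contains a non-graded maximal ideal (for instance, the ideal generated by $u - 1$). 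Pulling this back along $\bar S \to T^{-1}\bar S$ yields a non-graded prime $\bar \q$ of $\bar S$ whose graded part $\bar\q^*$ must be $0$ (since it is contained in the pullback of the graded part of the chosen prime, and the only proper graded ideal of the graded field $T^{-1}\bar S$ is $0$). Lifting $\bar\q$ to $S$ produces a non-graded prime $\q \supsetneq \p$ with $\q^* = \p$, and Lemma \ref{mainlemma6.2}(iv) gives $\height(\q) = \sheight(\p) + 1$, so $\dim(S_{[\p]}) \geq \sheight(\p) + 1 = \sdim(S_{[\p]}) + 1$, achieving the upper bound. The main obstacle throughout is this construction of a non-graded prime with prescribed graded part; inverting all nonzero homogeneous elements of $S/\p$ to reduce the problem to the explicit graded field $K[u, u^{-1}]$, where Example \ref{example graded field} hands us the required non-graded ideal, is what makes the argument clean.
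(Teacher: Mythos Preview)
Your argument is correct and largely parallels the paper's: both establish the sandwich $\sdim(S_{[\p]}) \leq \dim(S_{[\p]}) \leq \sdim(S_{[\p]})+1$ and then, in the case $S_+ \not\subseteq \p$, exhibit a non-graded prime sitting just above $\p$ by recognizing that $(S/\p)_{[\p]}$ is a graded field of the form $K[u,u^{-1}]$ (your $T^{-1}\bar S$ is exactly this ring). The paper simply quotes that such a graded field has Krull dimension~$1$ (Example~\ref{example graded Krull dimension of Laurent polynomial ring}), while you build the prime explicitly from $(u-1)$; these are the same idea.

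The one genuine difference is the case $S_+ \subseteq \p$. You argue directly that any prime $\q$ with $\q^* = \p$ must be graded, by showing that every prime containing $S_+$ is graded. The paper instead observes that when $S_+ \subseteq \p$, every element of the GMCS $T$ has degree~$0$, so $S_{[\p]}$ is itself a positively graded Noetherian ring, and then invokes Lemma~\ref{maintheorem6.2} to conclude $\dim = \sdim$. The paper's route is a bit slicker and reuses existing machinery, while yours is more self-contained; both are short and valid.
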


\begin{proof}  Since $S_{[p]}$ is a localization of $S$,  it is Noetherian. Ignoring the grading and recalling the standard order-preserving correspondence between the set of all primes of $S$ disjoint from $T$ and the prime ideals of of $T^{-1}S$, for any MCS or GMCS $T$ in $S$.  So $\infty > \dim(S) \geq \dim(S_{[\p]}).$

We have already seen, then, that $ \sdim(S_{[\p]}) \leq \dim(S_{[\p]}) \leq \sdim(S_{[\p]}) + 1$.
 
 Now let $T$ be the GMCS consiting of all homogeneous elements of $S$ not in $\p$.
 
In the case where $S_+ \subseteq \p$, we must have $\p = (\p \cap S_0) \oplus S_+.$  For any element $t \in T$, this forces $\deg t = 0$.  Thus, $S_{[\p]}$ is a positively graded ring of finite Krull dimension, so $\dim(S_{[\p]}) = \sdim(S_{[\p]}).$

Now,  $S_{[\p]}/\p_{[\p]} = (S/\p)_{[\p]}$ is a graded field, and it does have a positive degree element since $S_+ \not \subseteq \p$:  Choose any homogeneous $t \in S_+$, $t \notin \p$.  Then $t \in T$, and has positive degree, thus $(t+\p)/1$ is a nonzero, positive degree element of $(S/\p)_{[\p]}$.  Forgetting the grading, this domain has dimension 1.  Thus, there must exist a prime $\q$, necessarily ungraded, of $S_{[\p]}$ such that 
$$\p_{[\p]} \subset \q.$$  Therefore 
$$\dim(S_{[\p]}) \geq \height(\p_{[\p]}) +1 = \sheight(\p_{[\p]}) +1= \sdim(S_{[\p]})+1,$$ yielding the conclusion.
\end{proof}

The following  lemma establishes a relationship between primes in the localized ring and primes in the degree $0$ part of the localization, the ideas are implicit in \cite{GRO}. 

\begin{lemma} Suppose that $S$ is a  positively graded ring, and $T$ is any GMCS that contains at least one element of positive degree.
\label{lemma existence of degree 0 localization correspondence}
If $\q$ is a prime ideal in $(T^{-1}S)_0$, then there exists a unique graded prime $\p \in Proj(S)$, disjoint from $T$, such that $\q = (T^{-1}\p)_0$. 
\end{lemma}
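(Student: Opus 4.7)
The plan is to reduce to a construction in $T^{-1}S$: I would produce a graded prime $\tilde{P}$ of $T^{-1}S$ with $\tilde{P} \cap (T^{-1}S)_0 = \q$ and then use the bijection of Lemma~\ref{lemma props of graded localization}(c) to pull $\tilde{P}$ back to a graded prime $\p$ of $S$ disjoint from $T$. The condition $\p \in Proj(S)$ will then fall out of the hypothesis that $T$ contains a positive-degree element, and uniqueness of $\p$ will follow from uniqueness of $\tilde{P}$ together with the same bijection.

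Fix a homogeneous $t \in T$ of positive degree $d$; it becomes a unit of degree $d$ in $T^{-1}S$. Writing $R = (T^{-1}S)_0$, I would define
\[ \tilde{\q} \doteq \{ x \in T^{-1}S : x_k \cdot (T^{-1}S)_{-k} \subseteq \q \text{ for every homogeneous component } x_k \text{ of } x \}. \]
Routine checks show that $\tilde{\q}$ is a graded ideal of $T^{-1}S$; primality follows for homogeneous $xy \in \tilde{\q}$ by the standard argument: if $xu \notin \q$ for some $u \in (T^{-1}S)_{-\deg x}$, then for any $v \in (T^{-1}S)_{-\deg y}$ one has $(xu)(yv) = (xy)(uv) \in \q$, forcing $yv \in \q$. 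Testing $r \in R$ against $z = 1$ yields $\tilde{\q} \cap R = \q$, and $\tilde{\q}$ is disjoint from $T$ because for every $s \in T$ the product $s \cdot s^{-1} = 1$ is not in $\q$. Lemma~\ref{lemma props of graded localization}(c) then produces a graded prime $\p$ of $S$, disjoint from $T$, satisfying $T^{-1}\p = \tilde{\q}$ and hence $(T^{-1}\p)_0 = \q$; moreover $\p \in Proj(S)$ because $t \in S_+ \cap T$ cannot lie in $\p$.

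The main obstacle will be uniqueness: I need to show that any graded prime $\tilde{P}$ of $T^{-1}S$ with $\tilde{P} \cap R = \q$ coincides with $\tilde{\q}$, after which Lemma~\ref{lemma props of graded localization}(c) transports uniqueness from $T^{-1}S$ back to $S$. The inclusion $\tilde{P} \subseteq \tilde{\q}$ is immediate, since for homogeneous $x \in \tilde{P}$ of degree $k$ and any $z \in (T^{-1}S)_{-k}$ one has $xz \in \tilde{P} \cap R = \q$. The subtle direction is $\tilde{\q} \subseteq \tilde{P}$, since a homogeneous $x \in \tilde{\q}$ of degree $k$ need not satisfy $d \mid k$. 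My key device will be to pass to $x^d$, of degree $kd$: noting that $x^{d-1} t^{-k} \in (T^{-1}S)_{-k}$, the defining condition for $\tilde{\q}$ gives $x^d t^{-k} = x \cdot (x^{d-1} t^{-k}) \in \q \subseteq \tilde{P}$; multiplying by the unit $t^k$ yields $x^d \in \tilde{P}$, and primality forces $x \in \tilde{P}$.
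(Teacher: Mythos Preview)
Your proof is correct, and it takes a genuinely different route from the paper's. The paper constructs $\p$ directly inside $S$ by setting $\p_i \doteq \{x \in S_i : \exists\, j>0,\, t \in T_j \text{ with } x^j/t^i \in \q\}$, and then verifies by hand that $\p$ is an additive subgroup, an ideal, disjoint from $T$, prime, and that $(T^{-1}\p)_0 = \q$; uniqueness is left to the reader. You instead pass immediately to $T^{-1}S$, where the fixed $t$ becomes a homogeneous unit, define the ``graded expansion'' $\tilde{\q}$ of $\q$ by a transporter-type condition, and then pull back through the standard localization correspondence of Lemma~\ref{lemma props of graded localization}(c).

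What your approach buys is conceptual clarity: once $t$ is a unit, the degree-shifting device $x \mapsto x^d t^{-k}$ lets you move any homogeneous element into degree $0$ after a power, which makes both primality of $\tilde{\q}$ and the uniqueness argument short and uniform. In particular, your uniqueness proof (showing that any graded prime $\tilde{P}$ over $\q$ must contain and be contained in $\tilde{\q}$) is complete, whereas the paper omits it. What the paper's approach buys is an explicit intrinsic formula for $\p$ in terms of $S$ and $T$ alone, without first passing to the localization; this is occasionally useful when one wants to identify $\p$ concretely. The underlying trick---raising to the $d$th power so that the degree becomes divisible by $\deg t$---is the same in both arguments, but you have packaged it more cleanly.
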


\begin{proof}  Uniqueness is left to the reader.  To establish existence, let $\q \in Spec(T^{-1}S)_0$.  Define for $i \geq 0$, 
$$\p_i \doteq \{x \in S_i\; | \; \exists j >0, t \in T_j \text{ s.t. } \frac{x^j}{t^i} \in \q \},$$ so that, since $\q$ is prime,
$$\p_0 =  \{r \in S_0 \; | \;  \frac{r}{1} \in \q \}.$$ Define $\p \doteq \oplus_{i\geq 0} \p_i$, we will show that $\p$ satisfies the required conditions. 

First, each $\p_i$ is an abelian group with respect to $+$.  For if $x,y \in \p_i$, $i  \geq 0$, then there exists a $k_1,k_2 >0$ and $s \in T_{k_1}, t \in T_{k_2}$ such that $\frac{x^{k_1}}{s^i}$ and $\frac{y^{k_2}}{t^i}$ are in $\q$. Then, $(x+y)^{k_1+k_2} = \sum_{\alpha+\beta=k_1+k_2}c_{(\alpha,\beta)}x^\alpha y^\beta$, for the binomial coefficient $c_{(\alpha,\beta)} \in S_0$.  Now, either $\alpha \geq k_1$ or $\beta \geq k_2$.  If $\alpha \geq k_1$, then $\frac{x^\alpha y^\beta}{s^i t^i} = \frac{x^{k_1}}{s^i} \cdot \frac{x^{\alpha-k_1}y^\beta}{t^i}$.  This is a product of an element in $\q$ with an element in $(T^{-1}S)_0$, so it must be in $\q$. A similar computation handles the case that $\beta \geq k_2$.  Therefore, $\frac{(x+y)^{k_1+k_2}}{(st)^i} \in \q$, and so $x + y \in \p_i$.  

To show that  that $\p$ is an ideal in $S$, one needs only to show that $S_i \p_j \subseteq \p_{i+j}$ for every $i,j$.  Suppose $s \in S_i$ and $x \in \p_j$.  There exists $k>0, t \in T_k$ with $\frac{x^k}{t^j} \in \q$.  Then, $\frac{(sx)^k}{t^{j+i}} = \frac{s^k}{t^i} \cdot \frac{x^k}{t^j}$, the product of an element in $(T^{-1}S)_0$ with an element in $\q$, and therefore $sx \in \p_{i+j}$ so that $\p$ is a graded ideal in $S$. 

Furthermore, $\p \cap T = \emptyset$: if not, choose  a $t \in \p_i \cap T$. So, there exists a $k >0$ and an $s \in T_k$ such that $\frac{t^k}{s^i} \in \q$.  However, the product $\frac{s^i}{t^k}\cdot \frac{t^k}{s^i}$ must also be in $\q$, which contradicts that $1 \not \in \q$.  Since $T$ has at least one nonzero element of positive degree, and $\p \cap T = \emptyset,$ $S_+ \not \subseteq \p$.

To verify that $\p$ is prime, suppose that $f \in S_n$, $g\in S_m$, and  $fg \in \p_{n+m}$.  There exists a $k >0$ and $t \in T_k$ such that $\frac{(fg)^k}{t^{m+n}} \in \q$.  Now, $\frac{(fg)^k}{t^{m+n}} = \frac{f^k}{t^n} \cdot \frac{g^k}{t^m} \in \q$, and by primality of $\q$, together with the definition of $\p$,  either $f \in \p_n$ or $g\in \p_m$.   

We have established that $\p \in Proj(S)$, and it only remains to show that $\q = (T^{-1}\p)_0$. Suppose that $\xi \in \q$, so $\xi$ may be written as $\frac{x}{t}$, with $x \in S_i$, $t \in T_i$.   If $i >0$ , then $x^i/t^i = \xi^i \in \q$, so $x \in \p_i$ by definition, and  $ \xi = \frac{x}{t} \in (T^{-1}\p)_0$.  If $i = 0$, then $\frac{t}{1}\xi = \frac{x}{1} \in \q$, so $x \in \p_0$ and $\xi =\frac{x}{t} \in (T^{-1}\p)_0$.

On the other hand, suppose that $\frac{x}{t} \in (T^{-1}\p)_0$, $x \in \p_i$, $t \in T_i$. By definition, there exists a $k >0$, and an $s \in T_k$ such that $\frac{x^k}{s^i} \in \q$.   Then, $\frac{s^i}{t^k}\cdot \frac{x^k}{s^i} \in \q$ since $\frac{s^i}{t^k} \in (T^{-1}S)_0$ and $\frac{x^k}{s^i} \in \q$.  Of course, $\frac{s^i}{t^k}\cdot \frac{x^k}{s^i} = \left(\frac{x}{t}\right)^k$, and by primality of $\q$, $\frac{x}{t} \in \q$. 
\end{proof}

Thus we have

\begin{theorem}  Suppose $S$ is a positively graded ring, and $T$ is a GMCS in $S$ containing at least one element of positive degree. Then, there exists a one-to-one inclusion-preserving correspondence 
$$
\{\p \in \mbox{Proj}(S) \;| \;\p \cap T = \emptyset \} \leftrightarrow
\{\q \in Spec (T^{-1}S)_0 \};
$$
this correspondence takes $\p$ to $(T^{-1}\p)_0.$
\end{theorem}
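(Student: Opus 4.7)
The plan is to piece the statement together from the previous lemma (Lemma~\ref{lemma existence of degree 0 localization correspondence}) and the lemma asserting that $(T^{-1}\p)_0$ is a prime ideal in $(T^{-1}A)_0$ when $\p$ is graded and disjoint from $T$. In particular, the forward assignment $\p \mapsto (T^{-1}\p)_0$ is already well-defined, landing in $\mbox{Spec}((T^{-1}S)_0)$; and Lemma~\ref{lemma existence of degree 0 localization correspondence} tells us that for every $\q \in \mbox{Spec}((T^{-1}S)_0)$ there is a unique $\p \in \mbox{Proj}(S)$ disjoint from $T$ with $(T^{-1}\p)_0 = \q$. So the only remaining issues are: (i) confirming that the forward map actually lands in $\mbox{Proj}(S)$ and is disjoint from $T$, (ii) verifying that $\p$ can be recovered from $(T^{-1}\p)_0$, and (iii) checking that the bijection preserves inclusions in both directions.

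For (i), if $\p \in \mbox{Proj}(S)$ is disjoint from $T$, then $\p$ is graded, $\p\cap T=\emptyset$, and $S_+\not\subseteq \p$, and the cited lemma gives $(T^{-1}\p)_0\in \mbox{Spec}((T^{-1}S)_0)$. For (ii), take the $\q \doteq (T^{-1}\p)_0$ and apply Lemma~\ref{lemma existence of degree 0 localization correspondence} to produce the unique $\tilde{\p} \in \mbox{Proj}(S)$ disjoint from $T$ with $(T^{-1}\tilde{\p})_0 = \q$; I then want to show $\tilde{\p}=\p$, which will follow from the explicit formula $\tilde{\p}_i = \{x \in S_i \mid \exists j>0,\, t \in T_j,\ x^j/t^i \in \q\}$ together with the fact that $\p$ itself satisfies this characterization: if $x \in \p_i$ then $x/t \in (T^{-1}\p)_0 = \q$ for any $t \in T_i$ (when $T_i$ is nonempty; otherwise use $x^j/t^i \in \q$ with $t \in T_j$ of positive degree and $j>0$ chosen so that $T_j$ exists — here one uses that $T$ contains a nonzero element of positive degree), and conversely any $x \in S_i$ for which such a $t$ exists lies in $\p_i$ because $\q = (T^{-1}\p)_0$ and $\p$ is prime.

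For (iii), inclusion preservation in the forward direction is immediate: if $\p \subseteq \p'$, then $T^{-1}\p \subseteq T^{-1}\p'$ and hence $(T^{-1}\p)_0 \subseteq (T^{-1}\p')_0$. For the backward direction, suppose $\q \subseteq \q'$ in $\mbox{Spec}((T^{-1}S)_0)$ with corresponding graded primes $\p,\p'$. The explicit formula for $\p$ in terms of $\q$ is manifestly monotone in $\q$: if $x\in \p_i$, witnessed by $x^j/t^i \in \q$, then also $x^j/t^i \in \q'$, so $x \in \p'_i$. Hence $\p \subseteq \p'$.

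The main obstacle I anticipate is the recovery step (ii), specifically ensuring that the formula from Lemma~\ref{lemma existence of degree 0 localization correspondence} really returns $\p$ when applied to $\q=(T^{-1}\p)_0$; the delicate point is the strictly positive exponents $j>0$ in the definition of $\tilde\p_i$, which forces one to leverage the hypothesis that $T$ contains an element of strictly positive degree to produce the needed $t \in T_j$. Once that is handled, everything else is formal and the bijection drops out of the uniqueness clause of the previous lemma.
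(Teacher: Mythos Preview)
Your proposal is correct and follows the paper's approach: the paper offers no proof at all (just ``Thus we have''), treating the theorem as immediate from Lemma~\ref{lemma existence of degree 0 localization correspondence} together with the earlier lemma that $(T^{-1}\p)_0$ is prime when $\p$ is graded and disjoint from $T$.

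One simplification: your step (ii) is more work than needed. Once you know the forward map $\p \mapsto (T^{-1}\p)_0$ is well-defined into $\mathrm{Spec}((T^{-1}S)_0)$, the uniqueness clause of Lemma~\ref{lemma existence of degree 0 localization correspondence} \emph{alone} gives injectivity: if $\p$ is in the domain and $\q = (T^{-1}\p)_0$, then $\p$ satisfies the property characterizing the unique preimage of $\q$, so $\tilde\p = \p$ with no appeal to the explicit formula. The explicit formula is genuinely needed only for your step (iii), the backward inclusion-preservation, which the paper leaves entirely implicit; your argument there is clean and correct.
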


Using the correspondence of the above theorem, we have, as expected,

\begin{corollary}  Suppose $S$ is a positively graded ing of finite Krull dimension.  Let $\p \in Proj(S)$.  Then $S_{(\p)}$ is a local  Noetherian ring of finite Krull dimension and
$$\dim(S_{(\p)}) = \sheight(\p) = \sheight(\p_{[\p]})= \sdim(S_{[\p]})= \dim(S_{[\p]}) -1.$$
\end{corollary}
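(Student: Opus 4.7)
The plan is to establish the chain of equalities from right to left, using the correspondences between primes of $S_{[\p]}$ and $S_{(\p)}$ on the one hand, and graded primes of $S$ on the other.

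First I would handle the rightmost equality: since $\p \in \mathrm{Proj}(S)$ means $S_+ \not\subseteq \p$, Theorem \ref{theorem krull dimension localized Z graded} applies directly to give $\dim(S_{[\p]}) = \sdim(S_{[\p]}) + 1$ and simultaneously confirms that $S_{[\p]}$ has finite Krull dimension (it is a localization of the finite Krull dimensional ring $S$). Next, since $\p_{[\p]}$ is the unique *maximal ideal of $S_{[\p]}$, every graded prime of $S_{[\p]}$ is contained in $\p_{[\p]}$, so chains of graded primes of maximal length can always be taken to terminate there; this yields $\sdim(S_{[\p]}) = \sheight(\p_{[\p]})$. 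The equality $\sheight(\p_{[\p]}) = \sheight(\p)$ is then immediate from Lemma \ref{lemma props of graded localization}(c): graded primes of $S_{[\p]}$ correspond inclusion-preservingly to graded primes of $S$ disjoint from $T$ (the GMCS of homogeneous elements outside $\p$), and a graded prime is disjoint from $T$ precisely when it is contained in $\p$. Under this correspondence $\p \leftrightarrow \p_{[\p]}$, so chain lengths transfer.

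For the leftmost equality and locality of $S_{(\p)}$, I would apply the degree-zero correspondence theorem just above the corollary. The GMCS $T$ contains an element of strictly positive degree (since $S_+ \not\subseteq \p$), so the theorem yields an inclusion-preserving bijection between $\{\q \in \mathrm{Proj}(S) \mid \q \cap T = \emptyset\}$ and $\mathrm{Spec}(S_{(\p)})$. Any graded prime $\q \subseteq \p$ automatically lies in $\mathrm{Proj}(S)$, for otherwise $S_+ \subseteq \q \subseteq \p$ contradicts $\p \in \mathrm{Proj}(S)$; combined with the observation above that $\q \cap T = \emptyset$ iff $\q \subseteq \p$, this identifies $\mathrm{Spec}(S_{(\p)})$ order-isomorphically with the graded primes of $S$ contained in $\p$. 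Reading off chain lengths gives $\dim(S_{(\p)}) = \sheight(\p)$, and noting that $\p$ itself is the unique maximum of this poset shows $\p_{(\p)}$ is the unique maximal ideal of $S_{(\p)}$, so the ring is local.

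The remaining point is Noetherianity of $S_{(\p)}$. I would use the extension-contraction pair $I \mapsto IS_{[\p]}$ from ideals of $S_{(\p)}$ to graded ideals of $S_{[\p]}$, retracted by $J \mapsto J_0 = J \cap (S_{[\p]})_0$. Since generators of $IS_{[\p]}$ are of the form $rx$ with $r \in S_{[\p]}$ and $x \in I \subseteq (S_{[\p]})_0$, extracting the degree-zero component forces only $r_0 \in S_{(\p)}$ to contribute, giving $(IS_{[\p]})_0 = I$. Therefore any ascending chain $I_1 \subseteq I_2 \subseteq \cdots$ in $S_{(\p)}$ pushes forward to an ascending chain of graded ideals in the Noetherian ring $S_{[\p]}$, which stabilizes; applying $(-)_0$ recovers stabilization of the original chain. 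The main subtlety in the whole argument is not any single hard step, but keeping the two correspondences (for $S_{[\p]}$ and for $S_{(\p)}$) and the passage between $\p$ and $\p_{[\p]}$ cleanly compatible with the notion of chain length; once that bookkeeping is in order, every equality in the displayed chain reduces to a previously established lemma or theorem.
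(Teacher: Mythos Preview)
Your proof is correct and follows exactly the route the paper indicates: the paper states the corollary without proof, prefacing it only with ``Using the correspondence of the above theorem, we have, as expected,'' and your argument supplies precisely those details, invoking Theorem~\ref{theorem krull dimension localized Z graded} for the rightmost equality, Lemma~\ref{lemma props of graded localization}(c) for the transfer between $\sheight(\p)$ and $\sheight(\p_{[\p]})$, and the degree-zero correspondence theorem for $\dim(S_{(\p)})$ and locality. Your extension--contraction argument for Noetherianity of $S_{(\p)}$ is also correct and is more explicit than anything the paper writes down.
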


\subsubsection{Poincar\'{e} series and dimension for positively graded rings}

The ring $\mathbb{Z}[[t]][t^{-1}]$ is denoted by $\mathbb{Z}((t))$; thus an element of $\mathbb{Z}((t))$ is a formal Laurent series $f(t)$ with integer coeffiicients;  there always exists an $n \in \mathbb{Z}$ with $t^nf(t) \in \mathbb{Z}[[t]].$

In order to define the Poincar\'{e} series for a graded abelian group $M$, we assume, in addition, that 1) each $M_j$ is a finitely generated module over a ordinary commutative Artinian ring $S_0$ and 2)  $M_j = 0$ for $j <<0$.   Whenever we write down a Poincar\'{e} series for a graded abelian group $M$, we will make these assumptions.   

For example, if $S$ is a positively graded Noetherian ring and $M \in \grmod(S)$, then as long as $S_0$ is Artinian, 1) and 2) hold.

\begin{definition} Suppose that $M$ is a graded abelian group and $S_0$ is an Artinian ring satisfying 1) and 2) above.  Then the \textbf{Poincar\'{e} series} of $M$ is the formal Laurent series with integer coefficients
$$P_M(t) = \sum_{i \in \mathbb{Z}} \ell_{S_0}(M_i)t^i,$$
where $\ell_{S_0}(M_i)$ is the length of the finitely generated module $M_i$ over the Artinian ring $S_0$.  
\end{definition}

Sometimes the Poincar\'{e} series is called the Hilbert series, or the Hilbert-Poincar\'{e} series.

\begin{theorem} \label{theorem Hilbert-Serre}(The Hilbert-Serre Theorem) \cite{AtMac} Let $S$ be a positively graded Noetherian ring with $S_0$ Artinian, $M \in \grmod(S)$. Suppose that $S$ is generated as a $S_0$-algebra by elements $x_1, \ldots , x_n$ of positive degrees $d_1, \ldots , d_n$. Then,
$$P_M(t) = \frac{q(t)}{\prod_{i=1}^{n}(1-t^{d_i})},$$
where $q(t) \in \mathbb{Z}[t, t^{-1}]$.

Furthermore, if $M$ has no elements of negative degree,  $q(t) \in \mathbb{Z}[t]$.
\end{theorem}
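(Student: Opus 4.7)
The plan is induction on $n$, the number of algebra generators of $S$ over $S_0$, after first checking that the Poincar\'e series is well-defined. Since $S$ is generated as an $S_0$-algebra by $x_1, \ldots, x_n$ of positive degrees, each $S_i$ is spanned over $S_0$ by the finitely many monomials in the $x_j$'s of total weighted degree $i$, hence is finitely generated over $S_0$. Then Lemma~\ref{lemma *length of components}(b) gives that $M_j$ is finitely generated over $S_0$, and since $S_0$ is Artinian, $\ell_{S_0}(M_j) < \infty$ for every $j$. The lower bound $M_j = 0$ for $j \ll 0$ is automatic since $S$ is positively graded and $M$ is finitely generated.

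For the base case $n = 0$, the ring $S$ equals $S_0$ and $M$ is generated over $S_0$ by finitely many homogeneous elements $m_1, \ldots, m_r$ of degrees $e_1, \ldots, e_r$; then $M_j = \sum_{e_i = j} S_0 m_i$ is zero outside a finite set of indices, so $P_M(t)$ is already a Laurent polynomial (a polynomial if the $e_i$ are all nonnegative), and the empty product in the denominator equals $1$. For the inductive step, multiplication by $x_n$ gives a graded $S$-module map of degree zero,
$$0 \longrightarrow K \longrightarrow M(-d_n) \xrightarrow{\ x_n\ } M \longrightarrow L \longrightarrow 0,$$
where the suspension convention $M(-d_n)_j = M_{j - d_n}$ makes the map degree-preserving. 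Both $K$ and $L$ are annihilated by $x_n$, so they are finitely generated graded modules over the positively graded Noetherian ring $S' := S/(x_n S)$, which has $(S')_0 = S_0$ and is generated as an $S_0$-algebra by the images of $x_1, \ldots, x_{n-1}$. Applying $\ell_{S_0}$ componentwise to the exact sequence and using additivity gives, for each $j$,
$$\ell_{S_0}(K_j) - \ell_{S_0}(M_{j - d_n}) + \ell_{S_0}(M_j) - \ell_{S_0}(L_j) = 0,$$
and multiplying by $t^j$ and summing yields $(1 - t^{d_n}) P_M(t) = P_L(t) - P_K(t)$. The inductive hypothesis applied to $K$ and $L$ over $S'$ produces numerators $q_K, q_L \in \mathbb{Z}[t, t^{-1}]$ with $P_K, P_L$ expressed over $\prod_{i=1}^{n-1}(1 - t^{d_i})$, so $P_M(t) = (q_L(t) - q_K(t))/\prod_{i=1}^n (1 - t^{d_i})$ as required.

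For the final claim, suppose $M_j = 0$ for all $j < 0$. Then $L_j$, being a quotient of $M_j$, vanishes for $j < 0$, and $K_j \subseteq M(-d_n)_j = M_{j - d_n}$ vanishes for $j < d_n$, in particular for $j < 0$. Thus $K$ and $L$ satisfy the hypothesis of the stronger conclusion, and induction gives $q_K, q_L \in \mathbb{Z}[t]$, hence $q = q_L - q_K \in \mathbb{Z}[t]$. The only mildly delicate points are bookkeeping rather than genuine obstacles: confirming that the suspension shift makes $x_n$ a degree-zero map (so the additivity formula for $\ell_{S_0}$ on graded pieces is valid), and checking that finite generation transfers from $M$ to $K$ (via Noetherianity of $S$) and from $M$ to $L$ (via the quotient). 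Once these are in place, the inductive identity $(1 - t^{d_n}) P_M = P_L - P_K$ is the substantive content of the theorem.
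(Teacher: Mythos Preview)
Your argument is correct and is exactly the classical induction-on-generators proof from Atiyah--Macdonald. The paper does not give its own proof of this theorem but simply cites \cite{AtMac}, so there is nothing further to compare; your write-up faithfully reproduces that standard argument, including the extra observation about the numerator lying in $\mathbb{Z}[t]$ when $M$ is concentrated in nonnegative degrees.
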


From now on, we assume that  $S$ is a positively graded Noetherian ring of finite (Krull) dimension, with $S_0$ Artinian.

Some facts to note about Poincar\'{e} series:
\begin{itemize}
\item Let $\hat{S}$ be another positively graded ring. Assume also that  the graded abelian group $M$ is  in   $\grmod(S)$,  $S_0 = \hat{S}_0$ is Artinian and $M$ is also a graded $\hat{S}$-module (but not necessarily finitely generated as such).  Then whether we consider $M$ as an $S$-module or as a $\hat{S}$-module, its Poincar\'{e} series does not change.  For example, let $y_1, \ldots, y_s \in S_+$ be homogeneous.  Define $\hat{S} = S_0\langle y_1, \ldots, y_s \rangle$ to be the graded subring of $S$ generated by $S_0$ and $y_1, \ldots, y_s$.  Now, whether we consider $M$ as an $S$-module, or as an $\hat{S}$-module, its Poincar\'{e} series is the same.
\item  If $M$ has a Poincar\'{e} series with respect to $S$, then so does $M(n)$, for every $n \in \mathbb{Z}$, and
$$P_{M(n)}(t) = t^{-n}P_M(t).$$
\item  If $0 \rightarrow P \rightarrow M \rightarrow N \rightarrow 0$ is a short exact sequence in $\grmod(S)$, then
$$P_M(t) = P_P(t) + P_N(t).$$
\item If $M, N \in \grmod(S)$, then $P_{M \otimes_{S_0}N}(t) = P_M(t)P_N(t),$ if $M \otimes_{S_0} N$ is given the usual grading.
\end{itemize}

We end this section with a brief discussion of the connection of the Poincar\'{e} series with (Krull) dimension.

\begin{definition} Let $M$ be in $\grmod(S)$, $M \neq 0$.
\begin{itemize}
\item If $M \in \grmod(S)$, $d_1(M)$ is the least $j$ such that there exist positive integers $f_1, \ldots, f_j$ with
$$(\prod_{i=1}^j (1-t^{f_i}))P_M(t) \in \mathbb{Z}[t, t^{-1}].$$ By definition, $d_1(M)= 0$ if and only if $P_M(t)$ is in $\mathbb{Z}[t, t^{-1}].$  Note that the Hilbert-Serre theorem shows that $d_1(M) < \infty$; also, $d_1(M)$ is the order of the pole at $t=1$ for $P_M(t).$
\item $s_1(M)$ is the least $s$ such that there exist homogeneous elements $y_1, \ldots, y_s \in S_+$ with $M$ finitely generated over $S_0 \langle y_1, \ldots , y_s \rangle \subseteq S$. By definition, $s_1(M) = 0$ if and only if $M$ is a finitely generated graded $S_0$-module. Note that for a finite set of homogeneous generators for $S_+$, the number of elements in that set is an upper bound for $s_1(M)$.  

\item  $d_1(0)=s_1(0) = -\infty.$
\end{itemize}
\end{definition}
Note that if $n \in \mathbb{Z}$, then $d_1(M(n)) = d_1(M)$, since $P_{M(n)}(t) = t^{-n}P_M(t)$.  Also, $s_1(M(n)) = s_1(M)$ by definition.
The following theorem and proposition could be considered   ``folklore", but the paper of Smoke cited is, as far as we know, the first appearance of these statements in the literature.
\begin{theorem} \textbf{Smoke's Dimension Theorem} (Theorem 5.5 of \cite{sm})\\ \label{theorem Smoke dimension}  Suppose that $S$ is a positively graded ring of finite Krull dimension, with $S_0$ Artinian.
Let $M \in \grmod(S)$.  If $d_1(M), s_1(M)$ are defined as above, we have $$d_1(M) = s_1(M) = \sdim_S(M)  < \infty \text{ .} $$

Under the hypotheses of the theorem, we've already seen that $\sdim_S(M) = \dim_S(M)$, so all of these numbers equal $\dim_S(M)$ as well.

\end{theorem}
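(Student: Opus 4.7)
The plan is to prove the chain of inequalities $d_1(M) \leq s_1(M) \leq \sdim_S(M) \leq d_1(M)$, forcing all three quantities to coincide. Finiteness throughout is automatic: $d_1(M) < \infty$ by Theorem~\ref{theorem Hilbert-Serre}, $s_1(M) < \infty$ because $S$ itself is finitely generated as an $S_0$-algebra by elements of $S_+$, and $\sdim_S(M) \leq \dim(S) < \infty$ by hypothesis. The key technical input is a \emph{graded Noether normalization}: for any positively graded Noetherian $S_0$-algebra $\bar{S}$ (with $S_0$ Artinian) of Krull dimension $e$, there exist homogeneous, algebraically independent $y_1, \ldots, y_e \in \bar{S}_+$ such that $\bar{S}$ is a finitely generated module over the polynomial subring $S_0[y_1, \ldots, y_e]$. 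I would prove this by induction on the number of homogeneous $S_0$-algebra generators of $\bar{S}$, adapting the classical change-of-variables by using weighted \emph{monomial} (rather than linear) substitutions so that each substitution preserves homogeneity.

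For $d_1(M) \leq s_1(M)$: fix homogeneous $y_1, \ldots, y_{s_1(M)} \in S_+$ of positive degrees $e_1, \ldots, e_{s_1(M)}$ realizing $s_1(M)$. The Poincar\'{e} series $P_M(t)$ is unchanged whether $M$ is viewed over $S$ or over $\hat{S} = S_0\langle y_1, \ldots, y_{s_1(M)}\rangle$, and Theorem~\ref{theorem Hilbert-Serre} applied over $\hat{S}$ expresses $P_M(t) = q(t)/\prod_i (1 - t^{e_i})$; hence the pole order at $t=1$ is at most $s_1(M)$. For $s_1(M) \leq \sdim_S(M)$, set $\bar{S} = S/Ann_S(M)$ and note $\sdim_S(M) = \sdim(\bar{S}) = \dim(\bar{S})$, the last equality by Lemma~\ref{maintheorem6.2}; call this common value $e$. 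Apply graded Noether normalization to $\bar{S}$ to obtain homogeneous $y_1, \ldots, y_e \in \bar{S}_+$ with $\bar{S}$ finite over $S_0[y_1, \ldots, y_e]$; lifting each $y_i$ to a homogeneous $x_i \in S_+$ makes $M$ (which is f.g.\ over $\bar{S}$, which is f.g.\ over the image of $S_0\langle x_1, \ldots, x_e\rangle$) finitely generated over $S_0\langle x_1, \ldots, x_e\rangle \subseteq S$, so $s_1(M) \leq e = \sdim_S(M)$.

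For $\sdim_S(M) \leq d_1(M)$, reduce to $M = S/\p$ for a graded prime $\p$ using the filtration of Lemma~\ref{corollary the filtration graded}: since $P_M(t) = \sum_i t^{d_i} P_{S/\p_i}(t)$ has non-negative coefficients, $d_1(M) = \max_i d_1(S/\p_i)$; and because the minimal primes of $M$ occur among $\{\p_i\}$, we have $\sdim_S(M) \leq \max_i \sdim(S/\p_i)$. So it suffices to show $\sdim(S/\p) \leq d_1(S/\p)$ for each graded prime $\p$. Applying graded Noether normalization to $S/\p$ yields an injection $S_0[y_1, \ldots, y_e] \hookrightarrow S/\p$ (with $e = \sdim(S/\p)$) under which $S/\p$ is finite; in particular $P_{S/\p}(t) \geq P_{S_0[y_1, \ldots, y_e]}(t) = \ell_{S_0}(S_0)/\prod_i (1 - t^{\deg y_i})$ coefficient-wise, and the right-hand side has a pole of exact order $e$ at $t=1$, so $d_1(S/\p) \geq e = \sdim(S/\p)$. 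The real technical content is the graded Noether normalization itself---once that ingredient is in hand, assembling the three inequalities is essentially bookkeeping.
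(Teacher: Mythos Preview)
The paper does not prove this theorem; it is stated with attribution to Smoke \cite{sm} and used thereafter as a known input, so there is no in-paper argument to compare your proposal against.

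Your cycle of inequalities is correct. Two small comments. First, in the final step the Noether-normalization injection should be written as $(S_0/\p_0)[y_1,\dots,y_e]\hookrightarrow S/\p$ rather than $S_0[y_1,\dots,y_e]\hookrightarrow S/\p$, since $\p$ may meet degree zero; this does not damage the conclusion, because $\ell_{S_0}(S_0/\p_0)\ge 1$ still forces a pole of order $e$ on the right-hand side. (In fact $S_0/\p_0$, being an Artinian domain, is a field, so here you are really invoking the classical graded normalization over a field.) Second, the graded Noether normalization you call on in step two is over the general Artinian base $\bar S_0 = S_0/(Ann_S M)_0$, which is less standard than the field case; it is true, but the cleanest justification is precisely the iterated homogeneous-prime-avoidance construction (choose $y_1\in\bar S_+$ outside all minimal primes of maximal dimension, pass to $\bar S/(y_1)$, repeat), together with Lemma~\ref{lemma finite generation gsop} to convert ``$\bar S/(y_1,\dots,y_e)$ has finite length'' into ``$\bar S$ is module-finite over $\bar S_0\langle y_1,\dots,y_e\rangle$.'' That inductive mechanism is essentially the same one the paper later spells out in proving the $*$local dimension theorem (Theorem~\ref{theorem fun thm graded dimension theory}), so the ``black box'' you are invoking and the paper's own explicit parameter-element argument are doing equivalent work.
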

\section{Graded ideals of definition and graded systems of parameters}
Returning to the more general case a graded ring $A$, not necessarily positively graded, we define analogously to Serre, a graded ideal of definition and a graded system of parameters.  
\begin{definition} Let $A$ be a graded ring and $M \in \grmod(A)$. 
A proper, graded ideal $\I$ of $A$ such that $*\ell_A(M/\I M) <\infty$ is called a graded ideal of definition for $M$ (a GIOD for $M$). 
\end{definition}
(This is a little different from Serre's definition \cite{se} of an ideal of definition in the ungraded case.)  Lemmas 2.4 and 3.9 say that $\I$ is a graded ideal of definition for $M$ if and only if all graded primes containing $\I + Ann_A(M)$ are  *maximal.

\begin{definition}  Let $A$ be a Noetherian ring of finite Krull dimension, and also assume that $A$ is either a positively graded ring or a *local  ring with unique *maximal ideal $\mathcal{N}$. Define $\m$ to be the graded ideal $A_+$ in the first case, and the ideal $\mathcal{N}$ in the second.
Suppose $M \neq 0$ is in $\grmod(A)$.  A sequence $y_1, \ldots , y_D$ of homogeneous elements of $\m$, such that 
\begin{itemize}
\item  the graded $A$-module $M/(y_1, \ldots, y_D)M$ has finite *length over $A$ and
\item $D = \sdim_A(M)$
\end{itemize}
is called a graded system of parameters (GSOP) for the $A$-module $M$.
\end{definition}

Note that by definition, a GSOP (or a GIOD) for $M$ is also a GSOP (resp. GIOD) for $M(n)$, for every $n \in \mathbb{Z}$ (and vice versa).

In the positively graded case, an alternative characterization  of some GIODs (and thus some GSOPs) is given by:

\begin{lemma}  \label{lemma finite generation gsop} Suppose that $S$ is a positively graded Noetherian ring of finite Krull dimension, with $S_0$ Artinian,  and $y_1, \ldots, y_u$ are homogeneous elements of $S_+$. Let $M \in \grmod(S)$.  Then,  $M/(y_1, \ldots, y_u)M$ has finite *length over $S$ if and only if $M$ is a finitely generated  $S_0\langle y_1, \ldots, y_u \rangle$-module.
\end{lemma}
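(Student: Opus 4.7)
Let me write $R = S_0\langle y_1,\ldots,y_u\rangle \subseteq S$, which is a positively graded Noetherian subring with $R_0 = S_0$, and abbreviate $I = (y_1,\ldots,y_u)$ (viewed either as an ideal of $R$ or of $S$; the submodule $IM$ of $M$ is the same either way). The plan is to prove the two implications separately, with Lemma~\ref{lemma *length of components} doing most of the heavy lifting.

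For the ``$\Leftarrow$'' direction, I would set $N \doteq M/IM$ and observe that $N$ is a finitely generated $R$-module (as a quotient of one) on which each $y_i$ acts as zero. Hence $N$ is naturally a finitely generated module over $R/IR$; since $R$ is positively graded with $R_0 = S_0$ and is generated over $S_0$ by the $y_i \in R_+$, the quotient $R/IR$ is just $S_0$. Choosing a finite homogeneous generating set of $N$ over $S_0$ shows $N_j = 0$ for all but finitely many $j$, and each $N_j$ is a finitely generated module over the Artinian ring $S_0$, hence has finite $S_0$-length. The hypotheses of Lemma~\ref{lemma *length of components}(c) apply (each $S_i$ is finitely generated over $S_0$ because $S$ is a Noetherian positively graded ring and so is a finitely generated $S_0$-algebra by homogeneous elements of positive degree), yielding $*\ell_S(N) = \ell_{S_0}(N) < \infty$.

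For the ``$\Rightarrow$'' direction, I would assume $*\ell_S(M/IM) < \infty$ and extract from Lemma~\ref{lemma *length of components}(a) an integer $J$ such that $(M/IM)_j = 0$ for $j > J$, i.e., $M_j \subseteq \sum_{i=1}^u y_i M_{j - \deg y_i}$ for every $j > J$. Since $S$ is positively graded and $M \in \grmod(S)$ there exists $t_0$ with $M_j = 0$ for $j < t_0$, and by Lemma~\ref{lemma *length of components}(b) each $M_j$ is finitely generated over $S_0$. I would pick, for every $j$ with $t_0 \leq j \leq J$, a finite $S_0$-generating set of $M_j$ consisting of homogeneous elements, and take $E$ to be the (finite) union of these sets. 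The claim is that $E$ generates $M$ as an $R$-module, proved by induction on degree: a homogeneous $m \in M_j$ with $j \leq J$ is an $S_0$-combination of elements of $E$ by construction, and a homogeneous $m \in M_j$ with $j > J$ can be written as $m = \sum_i y_i m_i$ with each $m_i$ homogeneous of degree $j - \deg y_i < j$ (using $\deg y_i > 0$), so the inductive hypothesis places each $m_i$ in $RE$, and thus $m \in RE$ as well.

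There is no genuine obstacle here; the argument is purely a grading/induction bookkeeping exercise once the right pieces from Lemma~\ref{lemma *length of components} are assembled. The only points that require small care are the verification that $R/IR = S_0$ (which uses $y_i \in S_+$ to ensure $R_0 = S_0$), and the observation that each $S_i$ is a finitely generated $S_0$-module, needed to apply Lemma~\ref{lemma *length of components}(c) in the reverse direction.
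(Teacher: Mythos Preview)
Your proof is correct and follows essentially the same approach as the paper's: both directions hinge on Lemma~\ref{lemma *length of components}, with the forward direction proved by the identical degree-induction argument. The only cosmetic difference is in the ``$\Leftarrow$'' direction, where the paper verifies $(M/\I M)_j = 0$ for $j$ beyond the maximal degree of a generator by a direct computation, whereas you obtain the same conclusion by observing that $M/IM$ is a finitely generated $R/IR = S_0$-module---a slight streamlining of the same idea.
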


\begin{proof} Recall that $S_0\langle y_1, \ldots, y_u \rangle$ is the subring of $S$ generated by $S_0$ and $y_1, \ldots, y_u$.   Let $t_0 \in \mathbb{Z}$ be chosen such that $M_j = 0$ for $j < t_0$.  Suppose that $X \doteq M/(y_1, \ldots, y_u)M$ has finite *length over $S$.  We've seen that there exists an integer $t_0 \leq  t_1$ such that $X_j = 0$ if  $j > t_1$.  Using Lemma \ref{remark *Samuel equals Samuel},  $M_j$ is finitely generated over $S_0$,  so for every $j$ such that $t_0 \leq j \leq t_1$ we may choose  a finite set $E_j$ of generators, possibly empty,  for $M_j$ over $S_0$.  Then, we prove that $M$ is generated by the finite set $E \doteq \cup_{j=t_0}^{t_1} E_j$ over $S_0\langle y_1, \ldots, y_u \rangle$; to do this we show, using induction on $\deg(z)$, that a homogeneous element $z$ of $M$ is in the submodule of $M$ generated by $E$ over $S_0\langle y_1, \ldots, y_u \rangle$. To start the induction, note that if  $\deg(z) \leq t_1$, the claim  is certainly true.  Let $s > t_1$ and suppose that the inductive hypothesis holds for every homogeneous $w$ of degree strictly less than $s$.  Let $z$ be a homogeneous element of $M$ of degree $s$.  Since $s > t_1, (M/(y_1, \ldots, y_u)M))s = 0$, so $s \in (y_1, \ldots, y_u)M.$  Write $z = \sum_{\alpha = 1}^{u} y_{\alpha} m_{\alpha}$.  Since $\deg(y_{\alpha}) + \deg(m_{\alpha}) = s$ for every $\alpha$ such that $y_{\alpha}m_{\alpha} \neq 0$, and $\deg(y_{\alpha}) >0$ for every such $\alpha$, we must have $\deg(m_{\alpha})<s$ for every  $\alpha$ with $y_{\alpha}m_{\alpha} \neq 0$.  Thus by induction, $m_{\alpha}$ is a linear combination of elements of $E$, with coefficients in $S_0\langle y_1, \ldots, y_u \rangle$.  Clearly, then, so is $z$.  Note that this part of the proof never used that $S_0$ is Artinian.

Conversely, suppose $M$ is generated by a finite set $E$ of nonzero homogeneous elements as a  graded $S_0\langle y_1, \ldots, y_u \rangle$-module.  Set $\I \doteq (y_1, \ldots, y_u)$.  Let $t = \max \{ \deg(e) \mid e \in E \} $.  Then, for $j > t$, $(M/\I M)_j = 0$:  If $x \in M_j$, $j >t$, write $x = \sum_{e \in E} f_e e$, where $f_e \in S_0\langle y_1, \ldots, y_u \rangle$ is homogeneous.  If $\deg(f_e) \neq 0$, and $f_e e \neq 0$, then $f_e e \in \I M$.  Therefore, $x$ is equivalent to $
\sum_{f_e \neq 0, \deg(f_e) = 0} f_e e $ mod $\I M$.  However, for every summand  in this last sum,  we must have $\deg(e) = \deg(x) >t$ if $f_e e \neq 0$, a contradiction.  Thus, $x$ is equivalent to $0$ mod $\I M$.  Lemma  \ref{lemma *length of components} tells us that since $S_0$ is Artinian,  $*\ell_S(M/\I M) < \infty$.
\end{proof}

The following proposition is another part of the ``folklore" knowledge, but the citation is the first that we know of in the literature.  

\begin{proposition} (Theorem 6.2 of \cite{sm})  \label{proposition algebraic independence} Suppose that $S$ is a positively graded Noetherian ring of finite Krull dimension, with $S_0$ Artinian, and $M \neq 0$ is in $\grmod(S)$. Let  $D(M)\doteq d_1(M) = s_1(M) = \sdim_S(M) = \dim_S(M)$, so  Theorem \ref{theorem Smoke dimension} and Lemma \ref{lemma finite generation gsop} tell us that a GSOP exists for $M$.  Moreover, $D(M)$ is the length of any GSOP and if $y_1, \ldots ,y_{D(M)} \in S_+$ is a GSOP for $M$,  $y_1, \ldots, y_{D(M)}$ are algebraically independent over $S_0$.
\end{proposition}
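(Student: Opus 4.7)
The plan is to handle the three assertions in order; the first two reduce immediately to earlier results, and the algebraic independence is the substantive step.

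\emph{Existence and length of a GSOP.} By Smoke's Dimension Theorem, $s_1(M) = D(M)$, so there are homogeneous $y_1, \ldots, y_{D(M)} \in S_+$ such that $M$ is finitely generated over $R := S_0\langle y_1, \ldots, y_{D(M)}\rangle$. Lemma \ref{lemma finite generation gsop} then gives $*\ell_S(M/(y_1, \ldots, y_{D(M)})M) < \infty$, and combined with $D(M) = \sdim_S(M)$ this exhibits $y_1, \ldots, y_{D(M)}$ as a GSOP. Any GSOP has length $\sdim_S(M) = D(M)$ by definition.

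\emph{Algebraic independence.} Given a GSOP $y_1, \ldots, y_n$ with $n = D(M)$, form the graded surjection of $S_0$-algebras
\[
\phi : S_0[T_1, \ldots, T_n] \twoheadrightarrow R, \qquad T_i \mapsto y_i, \quad \deg T_i = \deg y_i .
\]
I would prove $\ker \phi = 0$ by comparing Krull dimensions on the two sides. Since $M$ is finitely generated over $R$ (Lemma \ref{lemma finite generation gsop}), the quotient $\bar S := S/Ann_S(M)$ acts faithfully on $M$ and hence embeds into $\mathrm{End}_{\bar R}(M)$, where $\bar R := R/(R \cap Ann_S(M))$. As $\bar R$ is Noetherian and $M$ is finitely generated over $\bar R$, $\mathrm{End}_{\bar R}(M)$ is finitely generated over $\bar R$, so $\bar S$ is a finite (hence integral) extension of $\bar R$. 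Consequently
\[
\dim R \ \geq\ \dim \bar R \ =\ \dim \bar S \ =\ \dim_S M \ =\ D(M) \ =\ n.
\]

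On the other hand, since $S_0$ is Artinian, $\dim S_0[T_1, \ldots, T_n] = n$, so $\dim R = n$ exactly. In the standard case where $S_0$ is a field, the polynomial ring is a domain of Krull dimension $n$, and any nonzero graded ideal strictly reduces its dimension, forcing $\ker \phi = 0$. I expect the main obstacle to be the equality $\dim_R M = \dim_S M$ (resolved by the finite-extension argument via $\mathrm{End}_{\bar R}(M)$), together with the final dimension-drop step, which is precisely where the ``domain'' structure of $S_0[T_1, \ldots, T_n]$, and hence $S_0$ being a field, implicitly enters the hypotheses.
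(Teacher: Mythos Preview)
The paper does not give its own proof of this proposition; it simply attributes the result to Smoke (Theorem~6.2 of \cite{sm}) and moves on. So there is nothing to compare your argument against directly, and your proof must stand on its own merits.

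For the field case $S_0 = k$, your argument is complete and correct. The finite-extension step via $\bar S \hookrightarrow \mathrm{End}_{\bar R}(M)$ is a clean way to get $\dim R \geq n$, and once $S_0[T_1,\dots,T_n]$ is a domain of dimension $n$, any nonzero (graded) ideal contains a nonzero non-zerodivisor, so Krull's principal ideal theorem forces the quotient to have dimension $\leq n-1$, giving $\ker\phi = 0$.

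Your hesitation about general Artinian $S_0$ is not just a technicality: the algebraic-independence assertion is \emph{false} at that level of generality. Take $S_0 = k[x]/(x^2)$ and $S = S_0[y_1,y_2]/(xy_1)$ with $\deg y_i > 0$, and $M = S$. The unique minimal prime of $S$ is $(x)$, so $\dim_S(M) = \dim k[y_1,y_2] = 2$; and $S/(y_1,y_2)S \cong S_0$ has finite length. Thus $y_1,y_2$ is a GSOP for $M$, yet $xy_1 = 0$ in $S$, so $y_1,y_2$ are \emph{not} algebraically independent over $S_0$. (A reduced counterexample works too: with $S_0 = k\times k$ and $S = S_0[y_1,y_2]/(e_1y_1)$ for the idempotent $e_1=(1,0)$.) So the statement as printed is too strong; one should read it with the standing hypothesis $S_0$ a field, which is exactly the setting of Sections~9--10 where the proposition is actually invoked (to identify $k\langle \bar{x}\rangle$ with a graded polynomial ring). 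Under that hypothesis your proof is fine.
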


\section{Multiplicities for graded modules}
In this section,  we define the  *Samuel multiplicity and *Koszul multiplicity for modules in  $\grmod(A)$. All of this work is done analogously to Serre \cite{se}, and since we only give brief discussions/proofs here, if the reader does not have in mind the development of multiplicities in \cite{se}, it's advised to have a copy of \cite{se} at hand.  Another treatment of multiplicity in the graded case is given in \cite{pr}.

The *Samuel multiplicity is explored using the tools of the graded category which we have developed thus far: *length, *dimension, graded localization, etc. The *Koszul multiplicity is defined using tools from homological algebra.  In each case, to adapt the theory from the ungraded case, we have the added complication of our objects being bi-graded - the internal grading that the module inherits from $\grmod(A)$, and an external grading coming from either the associated graded module in the case of Samuel multiplicities, or the complex grading for Koszul multiplicities. Keeping track of the bi-grading,  all morphisms respect both gradings, and as one might expect, the bi-grading does not cause any problems.  We show, as in the ungraded case, the two multiplicities (*Koszul and *Samuel) agree. 

Finally, we show that  the graded multiplicity theory agrees with the ungraded theory by simply forgetting the grading, when we work over positively graded rings. This is to be expected, for we have shown that *length and length agree in the positively graded case.  

In the following, we will consider filtrations of $A$-modules; as previously, we will use upper indices for filtrations, whether working in a graded or an ungraded category.   We'll use notations like $M^{\bullet}$ or often $\mathcal{F}(M)$ for filtrations of $M$ by $A$-modules.  Filtrations will be indexed in different ways, according to convention.
\begin{definition} Suppose that $\I$ is an ideal in $A$.  A filtration $\mathcal{F}(M)$ with $\mathcal{F}^{i+1}(M) \subseteq \mathcal{F}^i(M)$ for every $i \geq 0$, is called $\I$-bonne  if $\I\mathcal{F}^n(M) \subseteq \mathcal{F}^{n+1}(M)$, for every $n\geq 0 $, and with equality for $n>>0$.  
\end{definition}
\begin{example}  If $\I$ is an ideal in $A$, the $\I$-adic filtration $\cdots \subseteq \I^{j+1}M \subseteq \I^jM \subseteq \cdots \subseteq \I M \subseteq M$ is $\I$-bonne.\end{example}

If $A$ is a graded ring, and $M$ a graded $A$-module, a filtration $\mathcal{F}(M)$ is graded if and only if all the submodules $\mathcal{F}^j(M)$ are graded submodules; if $\I$ is a graded ideal, the definition of an $\I$-bonne graded filtration remains the same as in the ungraded case.

\subsection{The Ungraded Case}

We begin by outlining the procedure for defining the Hilbert and Samuel polynomials in the ungraded case (see \cite{se} for full discussion/proofs).  
 
Suppose that $H$ is a positively graded ring with $H_0$ Artinian, and that $H$ is generated as an $H_0$-algebra by a finite number of homogeneous elements $x_1, \ldots, x_u$ in $H_1$. Such a ring $H$ is then called a ``standard" graded ring.   For any finitely generated, positively graded $H$-module $M$,  $M_n$ is a finitely generated $H_0$-module for every $n$.  Since $H_0$ is Artinian, the Hilbert function, $n \mapsto \ell_{H_0}(M_n)$, is defined for all integers $n \geq 0$.   Using induction on the number of generators for $H$ as an $H_0$-algebra, and the additivity of length over exact sequences, one may prove that the Hilbert function is polynomial-like; in other words there is a unique polynomial $f$ with rational coefficients such that $f(n) = \ell_{H_0}(M_n)$ for all $n$ sufficiently large.  The polynomial describing the function $n \mapsto \ell_{H_0}(M_n)$ is called the Hilbert polynomial of $M$ (over $H$).

Recall the delta notation from the theory of polynomial-like functions:  if $f$ is a function with an integer domain, then $\Delta f$ is the function defined by $\Delta f(n) \doteq f(n+1)-f(n)$.  Then, we know that $f$ is polynomial-like if and only if $\Delta f$ is polynomial-like.  We may iterate  the operator ``$\Delta$" on integer domain functions, obtaining operators $\Delta^r$, for $r \geq 0$.

We review the definition of a Samuel polynomial and Samuel multiplicity in the ungraded case. So, for this and the next two paragraphs, suppose that $A$ is an ungraded Noetherian ring, $M$ an ungraded finitely generated $A$-module, and $\I$ is an ideal of $A$ such that $M/\I M$ has finite length over $A$; this last is true if and only if  $V(\I + Ann_A(M))$ consists of a finite number of maximal ideals in $A$.

Summarizing the discussion in \cite{se}, given  an ideal $\I$  with $\ell_A(M/\I M) < \infty$ and an $\I$-bonne filtration $\mathcal{F}(M)$, $\ell_A(M/\mathcal{F}^n(M))$, is well-defined.  Now, $V(M/\I M) = V(Ann_A(M) + \I)$  consists of a finite number of maximal ideals; without loss of generality we may assume that $Ann_A(M) = 0$ and $V(M/\I M) = V(\I)$ consists of a finite number of maximal ideals, so that $A/\I$ is an Artinian ring.  The positively graded associated graded module $gr(M) = \oplus_{n \geq 0} \mathcal{F}^n(M)/\mathcal{F}^{n+1}(M)$ is finitely generated over the positively graded associated graded ring $gr(A) = \oplus_{n \geq 0} \I^n/\I^{n+1}$. Furthermore $gr(A)$ is generated over $gr(A)_0 = A/\I$, an Artinian ring, by elements of degree one, and the Hilbert  polynomial for $gr(M)$ as a $gr(A)$-module exists.  

Then, $n \mapsto \ell_A(M/\mathcal{F}^{n+1}(M)) -  \ell_A(M/ \mathcal{F}^n(M)) = \ell_A(\mathcal{F}^n(M)/\mathcal{F}^{n+1}(M))$ is polynomial-like, and the general theory of polynomial-like functions tells us that the Samuel function $n \mapsto \ell_A(M/\mathcal{F}^n(M))$ is also polynomial-like.  The polynomial describing this function is called the Samuel polynomial $p(M, \mathcal{F}, n)$ of the $A$-module $M$ with respect to the filtration $\mathcal{F}$ and the ideal $\mathcal{I}$.
 
\subsection{The Graded Case}

We make  new, similar definitions in the graded category, now assuming $A$ is a graded Noetherian ring and $M \in \grmod(A)$.  We do not assume that $A$ is positively graded, nor that it is generated by elements of degree 1.  

 To define the *Hilbert polynomial, start  with certain  bigraded objects:  Suppose that $H$ is a bigraded ring such that $H_{i,j} = 0$ for $i <0$, $H_{0,*} \doteq \oplus_{j \in \mathbb{Z}} H_{0,j}$ is a graded ring that is *Artinian and $H$ is generated as an bigraded algebra  over the graded ring $H_{0,*}$ by a finite number of elements in $H_{1,*} \doteq \oplus_{j \in \mathbb{Z}} H_{1,j}$.  $M$  is taken to be a bigraded $H$-module such that $M_{i,j} = 0$ for $i<0$ and $M$ is generated as an $H$-module by a finite number of bi-homogeneous elements.  Then, for each $k \geq 0$, $M_{k,*} \doteq \oplus_{j  \in \mathbb{Z}} M_{k,j}$ is a finitely generated graded $H_{0,*}$-module, so $*\ell_{H_{0,*}}(M_{k,*})$ is well-defined for every $k \geq 0$. Furthermore, the function $k \mapsto *\ell_{H_{0,*}}(M_{k,*})$ is polynomial like.  To see this,  following the argument in \cite{se} for the ungraded case,   use induction on the number of bihomogeneous generators (taken from $H_{1,*}$) for $H$ as an $H_{0,*}$-algebra, and additivity of $*\ell$ over exact sequences of graded modules.  The exact sequence used in Theorem II.B.3.2 of \cite{se} becomes an exact sequence of graded modules, with middle map multiplication by a generator of bidegree $(1,d)$:  $$0 \rightarrow N_{n,*} \rightarrow M_{n,*}(-d) \rightarrow M_{n+1,*} \rightarrow R_{n+1,*} \rightarrow 0;$$ there is a shift for the second graded degree in the second term, and the rest of proof is the same otherwise with length replaced by *length.  Furthermore, using the argument of Theorem II.B.3.2 of \cite{se} for the ungraded case, we see that if $H$ is generated as a bigraded algebra over $H_{0,*}$ by $r$ elements of bidegree $(1,-)$, then the *Hilbert polynomial has degree less than or equal to $r-1$.

In the same spirit, we define a *Samuel function by making appropriate changes to consider the grading, as follows.  

Suppose $A$ is a graded ring, $\I$ is a graded ideal in $A$ and $\mathcal{F}(M)$ is a graded $\I$-bonne filtration of $M$.

Note that if $\I$ is a graded ideal in $A$,  $\mathcal{F}(M)$ is a graded $\I$-bonne filtration of $M$, and $d \in \mathbb{Z}$ is a fixed integer, we may shift degrees by $d$ throughout the filtration yielding an $\I$-bonne filtration $\mathcal{F}(d)$ of $M(d)$:  $\mathcal{F}(d)^n(M(d)) \doteq (\mathcal{F}^n(M))(d)$.  To see that this filtration is also $\I$-bonne,  just compute that $\I (\mathcal{F}(d)^n(M(d))) = (\I \mathcal{F}^n( M))(d)$ as follows.   Suppose that $x \in (\I \mathcal{F}^n(M))(d)_j =( \I \mathcal{F}^n(M))_{d+j}$, so that $x = \sum_t \alpha_t m_t$, where $\alpha_t \in \I, m_t \in \mathcal{F}^n(M)$ are all homogeneous and $\deg(\alpha_t) + \deg(m_t) = d+j$ whenever $\alpha_tm_t \neq 0$.  Thus, $\deg(m_t) = d + (j-\deg(\alpha_t))$ for all such $t$, so that $m_t \in (\mathcal{F}(d)^n)(M(d))_{j-\deg(\alpha_t)}$, $\alpha_t m_t \in \I(\mathcal{F}(d)^n(M(d)))_j$ for every $t$ and $x \in \I(\mathcal{F}(d)^n(M(d)))_j.$   The converse is similarly proved.  In particular, the $d$-suspension of the $\I$-adic filtration on $M$ is the $\I$-adic filtration on $M(d)$.

 Given a GIOD $\I$ for $M$, and a graded $\I$-bonne filtration $\mathcal{F}(M)$, $*\ell_A(M/\mathcal{F}^n(M)) < \infty$. Passing without loss of generality to the case $Ann_A(M) = 0$ as in the ungraded case, we see that $A/\I$ is a *Artinian ring and that the associated bigraded  module $gr(M) = \oplus_{n \geq 0} \mathcal{F}^n(M)/\mathcal{F}^{n+1}(M)$, where $gr(M)_{n,j} \doteq \mathcal{F}^n(M)_j/\mathcal{F}^{n+1}(M)_j$, is finitely generated over the associated bigraded  ring $gr(A) = \oplus_{n \geq 0} \I^n/\I^{n+1}$ (where $gr(A)_{n,j} \doteq ( \I^n)_j/(\I^{n+1})_j$).  Note that $gr(A)$ is generated  by elements of bidegree $(1,-)$, as an algebra over the *Artinian graded ring $A/\I $ and thus the *Hilbert  polynomial for $gr(M)$ as a $gr(A)$-module exists.

\begin{definition}Suppose that $\I$ is a GIOD for $M \in \grmod{A}$ and $\mathcal{F}$ is an $\I$-bonne filtration of $M$.  The *Samuel function with respect to $\mathcal{F}$ and $\I$ is defined on the nonnegative integers  by $n \mapsto *\ell_A(M/\mathcal{F}^{n}(M))$. 

\end{definition}

Since $*\ell_A(M/\mathcal{F}^{n+1}(M)) - *\ell_A(M/\mathcal{F}^n(M)) = *\ell_A(\mathcal{F}^n(M)/\mathcal{F}^{n+1}(M))$, the $\Delta$ operator applied to the *Samuel function is polynomial-like, so

\begin{lemma}  If $M \in \grmod(A)$ and $\I$ is a GIOD for $M$,
the *Samuel function for the graded $\I$-bonne filtration $\mathcal{F}(M)$  is polynomial-like.

\end{lemma}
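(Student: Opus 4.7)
The plan is to apply the *Hilbert polynomial machinery developed just above to the associated bigraded module $gr(M)$, and then use the standard fact that a function is polynomial-like if and only if its first difference is polynomial-like.

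First I would reduce to the case $Ann_A(M) = 0$ by replacing $A$ with $A/Ann_A(M)$ and $\I$ with its image; the *length $*\ell_A(M/\mathcal{F}^n(M))$ is unchanged by this replacement, and the image of $\I$ is still a GIOD since $*V(M/\I M)$ only involves graded primes containing $Ann_A(M)+\I$. In this reduced setting, $A/\I$ is a *Artinian graded ring (by the characterization of GIODs: all graded primes containing $\I$ are *maximal, combined with the lemma equating *Artinianness with finitely many *maximals in $*V$), so the discussion preceding the lemma applies and the *Hilbert polynomial for the bigraded $gr(A)$-module $gr(M)$ exists. Explicitly, $k \mapsto *\ell_{A/\I}(gr(M)_{k,*})$ is polynomial-like in $k$.

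Next I would identify this *Hilbert function with the first difference of the *Samuel function. Since $\mathcal{F}(M)$ is graded, each subquotient $\mathcal{F}^n(M)/\mathcal{F}^{n+1}(M)$ is a graded $A$-module annihilated by $\I$, so it is naturally a graded $A/\I$-module, and its *length over $A$ equals its *length over $A/\I$ (*composition series are the same). Thus
\[
\Delta(*\ell_A(M/\mathcal{F}^{\bullet}(M)))(n) = *\ell_A(\mathcal{F}^n(M)/\mathcal{F}^{n+1}(M)) = *\ell_{A/\I}(gr(M)_{n,*}),
\]
and by the previous paragraph this is polynomial-like in $n$.

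Finally, I would invoke the general theory of polynomial-like functions on $\mathbb{Z}_{\geq 0}$: a function $f$ is polynomial-like if and only if $\Delta f$ is polynomial-like. This immediately promotes the conclusion about the first difference to the *Samuel function itself. I do not expect a serious obstacle here; the only point that needs a bit of care is verifying that the bigraded setup from the excerpt really applies, i.e.\ that $gr(A)$ is generated over its $(0,*)$-row by bidegree $(1,-)$ elements (which follows from $\I$-bonne-ness of $\mathcal{F}$, since $\mathcal{F}^{n+1}/\mathcal{F}^{n+2} = \I\cdot(\mathcal{F}^n/\mathcal{F}^{n+1})$ for $n\gg 0$, and finite generation is absorbed by reindexing) and that $A/\I$ is *Artinian, which is exactly the GIOD hypothesis.
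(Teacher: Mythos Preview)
Your proposal is correct and follows essentially the same route as the paper: reduce to $Ann_A(M)=0$, observe $A/\I$ is *Artinian, apply the *Hilbert polynomial to $gr(M)$ over $gr(A)$, identify this with $\Delta$ of the *Samuel function, and lift via the standard polynomial-like machinery. One small slip in your final paragraph: the fact that $gr(A)$ is generated over $A/\I$ by bidegree $(1,-)$ elements has nothing to do with the $\I$-bonne-ness of $\mathcal{F}$ (which is a filtration of $M$, not $A$); it is automatic because $gr(A)$ is built from the $\I$-adic filtration of $A$. What $\I$-bonne-ness of $\mathcal{F}$ actually buys you, and what your parenthetical computation $\mathcal{F}^{n+1}/\mathcal{F}^{n+2} = \I\cdot(\mathcal{F}^n/\mathcal{F}^{n+1})$ for $n\gg 0$ is really showing, is that $gr(M)$ is finitely generated over $gr(A)$.
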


To set notation, the polynomial that calculates $*\ell_A(M/\mathcal{F}^n(M))$ for $n >>0$ will be called $*p(M,\mathcal{F},n)$, and if $\mathcal{F}$ is the $\I$-adic filtration on $M$, we will instead write $*p(M,\I,n).$ 

The following lemma incorporates graded versions of results in II.B.4 of \cite{se}.

\begin{lemma} \label{lemma Samuel props} Suppose that $M \in \grmod(A)$  and $\mathcal{F}(M)$ is a graded $\I$-bonne filtration of $M$ for some GIOD $\I$ for $M$.  Then
\begin{enumerate}
\item[a)] For every $d \in \mathbb{Z}$, $\I$ is a GIOD for $M(d)$, $\mathcal{F}(d)(M(d))$  is an $\I$-bonne filtration of the graded $A$-module $M(d)$ and 
$*p(M(d), \mathcal{F}(d), n) = *p(M,\mathcal{F},n)$.
\item[b)]  $*p(M, \I,n) = *p(M, \mathcal{F},n) + R(n)$, where $R$ is a polynomial with nonnegative leading coefficient and degree strictly less than that of the degree of $*p(M, \I,n)$.
\item[c)]  If $(Ann_A(M) + \I)/Ann_A(M)$ is generated by $r$ homogeneous elements, then the degree of $*p(M, \I,n)$ is less than or equal to $r$, and $\Delta^r(*p)$ is a constant less than or equal to $*\ell_A(M/\I M).$
\item[d)]  If $0 \rightarrow N \rightarrow M \rightarrow P \rightarrow 0$ is a short exact sequence in $\grmod(A)$, and $\I$ is a GIOD for $M$, then $\I$ is a GIOD for both $N$ and $P$ and 
$$*p(M, \I, n) + R(n) = *p(N, \I, n) + *p(P, \I, n),$$ where $R$ is a polynomial with nonnegative leading coefficient and degree strictly less than that of $*p(N, \I,n)$.
\item[e)]  If $\I$ and $\hat{\I}$ are two GIODs for $M$ such that $*V(\I + Ann_A(M)) = *V(\hat{\I} + Ann_A(M))$, then the degree of $*p(M, \I,n)$ equals the degree of $*p(M, \hat{\I}, n).$
\end{enumerate}
\end{lemma}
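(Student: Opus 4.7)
The plan is to mirror Serre's treatment in II.B.4, substituting $*\ell_A$ for $\ell_A$ throughout and carrying the second (internal) grading through the bi-graded associated-graded constructions. Each of (a)--(e) reduces to the graded analogue of a standard Samuel-function estimate; the bookkeeping for degree shifts in the internal index is always harmless because $*\ell_A$ is invariant under shifts.

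For (a), observe that $M(d)/\mathcal{F}(d)^n(M(d)) \cong (M/\mathcal{F}^n(M))(d)$ as graded $A$-modules, then invoke shift-invariance of $*\ell_A$. For (b), compare $\mathcal{F}$ to the $\I$-adic filtration: since $\mathcal{F}$ is graded $\I$-bonne, $\I^n M \subseteq \mathcal{F}^n(M)$ for every $n$, and there is some $k_0$ with $\mathcal{F}^{n+k_0}(M) \subseteq \I^n M$ for all $n$ large. Sandwiching the two *Samuel functions between one another (with the shift $n \mapsto n+k_0$) forces equal leading terms, and the difference $R$ has strictly smaller degree with nonnegative leading coefficient. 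For (c), induct on $r$; when $r = 0$ the module $M/\I M$ equals $M$ modulo the annihilator and the *Samuel function is eventually constant. For the inductive step, pick a homogeneous generator $y \in \I$ of degree $\delta$ and exploit the four-term exact sequence $0 \to (0 :_M y) \to M(-\delta) \xrightarrow{y} M \to M/yM \to 0$, applying part (d) and the inductive hypothesis to remove one generator at a time, keeping track of the degree drop under $\Delta$.

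For (d), the essential input is a graded Artin--Rees lemma, which follows from the usual proof applied to the bi-graded Rees algebra $\bigoplus_n \I^n$, noting that $N$ and $\I$ are graded so $\bigoplus_n (N \cap \I^n M)$ is a graded submodule of a Noetherian bi-graded module. This supplies a $k$ such that $N \cap \I^{n+k} M = \I^n(N \cap \I^k M)$ for $n$ large, so the induced filtration $n \mapsto N \cap \I^n M$ on $N$ is graded $\I$-bonne. The short exact sequence $0 \to N/(N \cap \I^n M) \to M/\I^n M \to P/\I^n P \to 0$ and the additivity of $*\ell_A$ over exact sequences in $\grmod(A)$ then give an identity of *Samuel polynomials, and using part (b) to replace $*\ell_A(N/(N \cap \I^n M))$ by $*p(N,\I,n)$ produces the lower-degree error $R$.

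Part (e) is handled by translating the radical condition into powers: by Lemma 2.4(c), $*V(\I + Ann_A(M)) = *V(\hat\I + Ann_A(M))$ gives $\sqrt{\I + Ann_A(M)} = \sqrt{\hat\I + Ann_A(M)}$, hence by Noetherianness an integer $s$ with $\I^s M \subseteq \hat\I M$ and $\hat\I^s M \subseteq \I M$, so $\I^{sn} M \subseteq \hat\I^n M$ and $\hat\I^{sn} M \subseteq \I^n M$ for every $n$. Taking quotients and *lengths gives $*p(M, \hat\I, n) \leq *p(M, \I, sn)$ and symmetrically; equality of degrees follows. The main obstacle in the whole program is (d): one has to verify a graded Artin--Rees and check that the induced filtration on $N$ really is both graded (automatic) and $\I$-bonne (the content of Artin--Rees). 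Everything else is a transcription of Serre's arguments with *length replacing length.
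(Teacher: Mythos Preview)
Your proposal is correct and follows essentially the same route as the paper, which also defers to Serre's II.B.4 with graded substitutions; your outline for (a), (b), (d), (e) matches the paper's (brief) argument exactly, including the identification of graded Artin--Rees as the one substantive ingredient.

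One small correction to your sketch of (c): the four-term sequence $0 \to (0:_M y) \to M(-\delta) \xrightarrow{y} M \to M/yM \to 0$ is the device for bounding the degree of the \emph{*Hilbert} polynomial of $gr(M)$ (multiplication by the image $\bar y \in \I/\I^2$ on graded pieces $\I^nM/\I^{n+1}M$), not for a direct induction on the *Samuel polynomial via part (d). Invoking (d) on that sequence at the level of $M$ doesn't peel off a generator of $\I$ in the way you need. The clean argument, and the one Serre uses, is the one the paper has already set up just before this lemma: the *Hilbert polynomial of $gr_\I(M)$ has degree $\le r-1$ because $gr_\I(A/Ann_A(M))$ is generated over its degree-zero part by $r$ elements; since $\Delta(*p(M,\I,n))$ agrees with the *Hilbert function for large $n$, the *Samuel polynomial has degree $\le r$, and $\Delta^r(*p)$ is the leading coefficient bound coming from $*\ell_A(M/\I M)$.
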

\begin{proof}  We've already noted that $\I (\mathcal{F}(d)^n(M(d))) = (\I \mathcal{F}^n( M))(d)$; so that $\mathcal{F}(d)(M(d))$ is an $\I$-bonne filtration of $M(d)$.  The *Samuel polynomials are identical since $M(d)/\mathcal{F}(d)^n(M(d)) = M(d)/ (\mathcal{F}^n(M)(d))=( M/(\mathcal{F}^n(M))(d),$ for every $n$.  The proofs of b)-e) follow exactly the proofs in Section II.B.4 of Lemma 3 and Propositions 10 and 11 of \cite{se}, adapted with clear notational changes to the graded case, and  are not given here.
\end{proof}

Since we will be interested in the leading coefficient of *Samuel polynomials,  b) above tells us that we may as well just consider $\I$-adic filtrations and suppress all talk about $\I$-bonne filtrations; the need to consider general $\I$-bonne filtrations $\mathcal{F}$ is indicated in the proof of d), even though we haven't given it, since the proof of d) uses the Artin-Rees lemma, which also holds in the graded context.

\begin{definition} Suppose that  $M \in \grmod(A)$, $\I$ is a GIOD for $M$ 
and $d \in \mathbb{Z}$, $d \geq \deg(*p(M,\I,n))$. The \textbf{*Samuel multiplicity} of $M$ with respect to $\I$ is defined as $$*e(M,\I,d) \doteq\Delta^d (*p(M,\I,n)).$$  \end{definition}

By properties of the finite difference operator $\Delta$, we see that $*e(M,\I,d) = 0$ whenever $d > \deg(*p(M,\I,n))$.  When $d= \deg(*p(M,\I,n))$, $*e(M,\I,d)$ is a positive integer, and one may compute that $$*p(M,\I,n) = \frac{*e(M,\I,d)}{d!} n^{d} + \text{lower order terms.}$$

Using Lemma \ref{lemma Samuel props}d), we see that if $$0 \rightarrow N \rightarrow M \rightarrow P \rightarrow 0$$ is a short exact sequence in $\grmod(A)$, $\I$ is a GIOD for $M$ and $d \geq \deg(*p(M, \I, n))$, then both $*e(N,\I, d)$ and $*e(P,\I,d)$ exist and $$*e(M, \I, d) = *e(N, \I, d) + *e(P, \I,d).$$

Therefore, using Lemma \ref{lemma Samuel props}a) as well,  we have

\begin{corollary}  \label{corollary sum decomp for multiplicity}Suppose that $M \in \grmod(A)$, $\I$ is a GIOD for $M$ and $M^{\bullet}$ is a graded filtration of $M$ such that $0 = M^0 \subset M^1 \subset \cdots M^{N-1} \subset M^N = M,$ and, for each $N \geq i \geq 1$, there are graded prime ideals $\p_i$ in $A$, integers $d_i$ and graded isomorphisms of $A$-modules $(A/\p_i )(d_i) \cong M^{i}/M^{i-1}$.   Then,

\begin{itemize}
\item[i)]  $\I$ is a GIOD for $A/\p_i$  and $*p(A/\p_i, \I,n)$ exists, for $1 \leq i \leq N$.
\item[ii)]  If $D \doteq \max\{ \deg(*p(A/\p_i, \I, n)) \doteq d_i \mid 1 \leq i \leq N\}$ and $\mathcal{D}(M^{\bullet}) \doteq \{\p_j \mid d_j = D \}$,
$$*e(M, \I, D) = \sum_{\p \in \mathcal{D}(M^{\bullet})} n_{\p}(M^{\bullet}) (*e(A/\p, \I, D)),$$ where $n_{\p}(M^{\bullet})$ is equal to the number of times $A/\p$, possibly suspended, occurs as an $A$-module isomorphic to a subquotient of the filtration $M^{\bullet}$.  Furthermore,  all of the integers on both sides of the equation are strictly positive.
\end{itemize}
\end{corollary}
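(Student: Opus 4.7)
The plan is to handle (i) by a direct containment argument and reduce (ii) to a telescoping application of Lemma \ref{lemma Samuel props} along the given filtration.

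For (i), each $\p_i$ arises from the construction in Lemma \ref{corollary the filtration graded}, and that lemma tells us $\{\p_j\} \subseteq *Supp_A(M) = *V(Ann_A(M))$, so $Ann_A(M) \subseteq \p_i$. Hence $\I + Ann_A(M) \subseteq \I + \p_i$, which forces $*V(\I + \p_i) \subseteq *V(\I + Ann_A(M))$. Since $\I$ is a GIOD for $M$, the characterization stated just after Definition 6.1 (itself resting on the graded support and radical identities in Section 2) says the larger set consists entirely of *maximal ideals, so the smaller one does as well. Thus $\I$ is a GIOD for $A/\p_i$ and $*p(A/\p_i, \I, n)$ exists.

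For (ii), I would induct on $i$ along the filtration. From the short exact sequence $0 \to M^{i-1} \to M^i \to (A/\p_i)(d_i) \to 0$, Lemma \ref{lemma Samuel props}(a) strips the suspension and Lemma \ref{lemma Samuel props}(d) yields
$$*p(M^i, \I, n) \;=\; *p(M^{i-1}, \I, n) + *p(A/\p_i, \I, n) - R_i(n),$$
where $\deg R_i < \deg *p(M^{i-1}, \I, n)$. Because *Samuel polynomials of nonzero modules have positive leading coefficients, no cancellation occurs between the two summands on the right, so inductively $\deg *p(M^i, \I, n) = \max(\deg *p(M^{i-1}, \I, n), \deg *p(A/\p_i, \I, n))$, and ultimately $\deg *p(M, \I, n) = \max_j \deg *p(A/\p_j, \I, n) = D$. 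Applying $\Delta^D$ annihilates every $R_i$ (each has degree strictly less than $D$) and telescopes to
$$*e(M, \I, D) = \sum_{i=1}^N *e(A/\p_i, \I, D).$$
Summands with $\p_i \notin \mathcal{D}(M^\bullet)$ vanish since $\deg *p(A/\p_i, \I, n) < D$ there, so grouping equal primes yields the stated formula. Positivity of each surviving $*e(A/\p, \I, D)$ follows because the leading coefficient of a *Samuel polynomial of top degree is a positive rational (hence $D!$ times it is a positive integer), and $n_\p(M^\bullet) \geq 1$ by the very definition of $\mathcal{D}(M^\bullet)$.

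The step I expect to require the most care is the degree bookkeeping: one must verify at every stage of the filtration that the error $R_i$ is invisible to $\Delta^D$ and that no cancellation of leading coefficients reduces $\deg *p(M^i, \I, n)$ below the predicted max. Both issues reduce to the positivity of the leading coefficients of *Samuel polynomials of nonzero modules, which is routine but should be carried explicitly as part of the inductive hypothesis, since Lemma \ref{lemma Samuel props}(d) only bounds $R_i$ by $\deg *p(M^{i-1}, \I, n)$, not by $D$ directly.
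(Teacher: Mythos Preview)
Your proposal is correct and follows essentially the same approach as the paper: derive additivity of $*e(-,\I,D)$ over short exact sequences from Lemma~\ref{lemma Samuel props}(d), strip suspensions via Lemma~\ref{lemma Samuel props}(a), and telescope along the filtration. One cosmetic point on part~(i): the $\p_i$ need not ``arise from the construction'' of Lemma~\ref{corollary the filtration graded}---rather, that lemma's second clause asserts $\{\p_j\}\subseteq *Supp_A(M)$ for \emph{any} filtration of the given shape, which is what you actually use.
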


Finally, we point out some scenarios in which *Samuel multiplicities equal those computed in the ungraded category.

\begin{theorem} \label{theorem star e equals e, pos graded} \textbf{The positively graded case.}  Suppose that $S$ is a positively graded Noetherian ring with $S_0$ Artinian, $M \in \grmod(S)$ and  $\I$  a GIOD for $M$.  Then, the ``ungraded" Samuel polynomial $p(M, \I, n)$ exists,  $ *p(M, \I, n) = p(M, \I, n)$  and, for every $d$, $*e(M, \I, d) = e(M, \I, d).$  
\end{theorem}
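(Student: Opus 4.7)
The plan is to reduce the statement to the length comparison already proved in Lemma \ref{lemma pos graded length equals *length}. First I would verify that the ordinary Samuel polynomial $p(M,\I,n)$ actually exists under these hypotheses. Since $\I$ is a GIOD, the set $*V(\I+\Ann_S(M))$ consists of a finite number of *maximal ideals, which in the positively graded setting are the same as maximal ideals. Because $S$ is positively graded, every minimal prime over the graded ideal $\I+\Ann_S(M)$ is graded (Proposition \ref{lemma Ass is graded}), hence *maximal, hence maximal in $S$. Therefore $V(\I+\Ann_S(M))$ is a finite set of maximal ideals, and so $\ell_S(M/\I M)<\infty$, which is precisely the condition for the ordinary Samuel polynomial of $M$ with respect to $\I$ to exist.

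Next I would compare the two Samuel functions term by term. For each $n\geq 0$, $\I^n$ has the same radical as $\I$, so $*V(\I^n+\Ann_S(M))=*V(\I+\Ann_S(M))$ and consequently $*\ell_S(M/\I^n M)<\infty$ (by the *Artinian characterization in the graded length section). Applying Lemma \ref{lemma pos graded length equals *length} directly to the graded $S$-module $M/\I^n M$, we obtain
$$*\ell_S(M/\I^n M)=\ell_S(M/\I^n M)$$
for every $n$. Thus the *Samuel function and the ordinary Samuel function on nonnegative integers coincide identically, not merely for $n\gg0$.

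Since the two polynomial-like functions agree at every nonnegative integer, the polynomials that represent them for $n\gg 0$ must be equal, giving $*p(M,\I,n)=p(M,\I,n)$. Applying the finite difference operator $\Delta^d$ to both sides for any $d$, the $d$-th *Samuel multiplicity equals the $d$-th ordinary Samuel multiplicity:
$$*e(M,\I,d)=\Delta^d(*p(M,\I,n))=\Delta^d(p(M,\I,n))=e(M,\I,d).$$

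There is no real obstacle here: the entire content sits in Lemma \ref{lemma pos graded length equals *length}, plus the radical-preserving observation that the finite-*length hypothesis propagates from $\I$ to every power $\I^n$. The only point requiring care is confirming that the ungraded Samuel polynomial is actually defined, i.e.\ that $\I$ is a bona fide ideal of definition for $M$ in Serre's ungraded sense, and for this the fact that *maximal graded ideals in a positively graded ring are already maximal (noted in the section on positively or negatively graded rings) does all the work.
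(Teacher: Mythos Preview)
Your proposal is correct and follows essentially the same route as the paper: both reduce everything to Lemma \ref{lemma pos graded length equals *length}, applied to each $M/\I^n M$, so that the *Samuel and Samuel functions coincide termwise and the polynomials and multiplicities therefore agree. You supply more detail than the paper does---explicitly checking that $*\ell_S(M/\I^n M)<\infty$ via the radical argument and that *maximal ideals are maximal in the positively graded setting---but the underlying argument is the same.
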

\begin{proof}   Lemma \ref{lemma pos graded length equals *length} tells us that, when we forget the grading, $\I$ has the property that $\ell_S(M/\mathcal{I}^nM) = *\ell_S(M/\mathcal{I}^nM) < \infty$. Therefore,  the ``ungraded" Samuel polynomial $p(M, \I, n)$ exists ($p(M, \I, n)$ is computed after forgetting the grading) and  $ *p(M, \I, n) = p(M, \I, n)$ .  So, if $d \geq \deg(*p(M, \I,n)) = \deg(p(M, \I, n))$, $*e(M, \I, d)$ is the exact same multiplicity $e(M, \I, d)$ defined in \cite{se}, after forgetting the grading.
\end{proof}

\begin{theorem}  \label{theorem star e equals e, element of degree 1} \textbf{The *local case in which $A-\mathcal{N}$ has a homogeneous element of degree 1.} Suppose that $(A, \mathcal{N})$ is a *local Noetherian ring,  $M \in \grmod(A)$ and $A-\mathcal{N}$ has a homogeneous element of degree 1.  Then, $\I_0$ is such that $\ell_{A_0}(M_0/(\I_0)^n M_0) < \infty$, $*p(M, \I,n) = p(M_0, \I_0 ,n)$ and for every $d$, $e(M_0, \I_0,d) = *e(M, \I, d).$
\end{theorem}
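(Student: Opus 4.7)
The plan is to compare the *Samuel polynomial $*p(M,\I,n)$ with the ordinary Samuel polynomial $p(M_0,\I_0,n)$ by showing that both compute the same integer for every sufficiently large $n$. The main tool will be Proposition \ref{proposition maximal equals minimal}, applied to the module $M/\I^n M$, together with the hypothesis that $A-\mathcal{N}$ contains a homogeneous element of degree $1$.

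First I would verify the hypotheses of that proposition. Since $\I$ is a GIOD for $M$, every graded prime containing $\I + Ann_A(M)$ is *maximal; because $A$ is *local, the only such prime is $\mathcal{N}$, so $*V(M/\I M) = \{\mathcal{N}\}$ (assuming $M \neq 0$, else the theorem is trivial; graded Nakayama guarantees $M/\I M \neq 0$ in that case). A standard radical argument shows $\sqrt{\I^n + Ann_A(M)} = \sqrt{\I + Ann_A(M)}$, hence $*V(M/\I^n M) = \{\mathcal{N}\}$ for every $n \geq 1$. Proposition \ref{proposition maximal equals minimal}, using the degree $1$ hypothesis, then yields
$$\ell_{A_0}\bigl((M/\I^n M)_0\bigr) = *\ell_A(M/\I^n M).$$

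Next I would identify $(M/\I^n M)_0$ with $M_0/(\I_0)^n M_0$. The inclusion $(\I_0)^n M_0 \subseteq (\I^n M)_0$ is immediate. For the reverse inclusion, fix a homogeneous $t \in A_1 - \mathcal{N}$; since $A$ is *local, $t$ is invertible, with $t^{-1}$ of degree $-1$. A typical generator of $(\I^n M)_0$ has the form $a_1 \cdots a_n m$ with each $a_i \in \I$ homogeneous of degree $d_i$ and $m \in M$ homogeneous of degree $-\sum_i d_i$. Rewriting
$$a_1 \cdots a_n m \;=\; (t^{-d_1} a_1)(t^{-d_2} a_2) \cdots (t^{-d_n} a_n)\bigl(t^{\sum_i d_i} m\bigr)$$
places each factor $t^{-d_i}a_i$ into $\I_0$ (since $\I$ is a graded ideal) and $t^{\sum_i d_i} m$ into $M_0$, so the product lies in $(\I_0)^n M_0$. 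Combining with the previous step,
$$\ell_{A_0}\bigl(M_0/(\I_0)^n M_0\bigr) = *\ell_A(M/\I^n M) < \infty \quad\text{for every } n \geq 1.$$

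Finally, this shows that $\I_0$ is an ordinary ideal of definition for $M_0$ over $A_0$, so the ordinary Samuel polynomial $p(M_0,\I_0,n)$ exists. Two polynomials in $n$ that agree for all sufficiently large $n$ must be equal, so $*p(M,\I,n) = p(M_0,\I_0,n)$, and the identity $*e(M,\I,d) = e(M_0,\I_0,d)$ for every $d$ follows at once, since both multiplicities are $d!$ times the corresponding coefficient of this common polynomial. The hard step is the identification $(\I^n M)_0 = (\I_0)^n M_0$: it is precisely there that the degree $1$ element is used, to shuffle degrees within an $n$-fold ideal product so that each factor lands in $A_0$ and the remaining module factor lands in $M_0$.
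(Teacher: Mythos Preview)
Your proof is correct and follows essentially the same approach as the paper: apply Proposition~\ref{proposition maximal equals minimal} to $M/\I^n M$ using the degree-$1$ unit, and identify $(\I^n M)_0$ with $(\I_0)^n M_0$ by shifting degrees with powers of that unit. The only organizational difference is that the paper first proves the general identity $(\mathcal{J}X)_0 = \mathcal{J}_0 X_0$ for any graded ideal $\mathcal{J}$ and graded module $X$, then deduces $(\I^n)_0 = (\I_0)^n$ by induction, whereas you do the identification $(\I^n M)_0 = (\I_0)^n M_0$ in a single direct step; both routes rest on the same degree-shuffling trick.
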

\begin{proof} First, note that for any graded ideal  $\mathcal{J}$ in $A$,  and every $X \in \grmod(A)$, it turns out in this case that $(\mathcal{J}X)_0 = \mathcal{J}_0 X_0$:  the containment ``$\supseteq$" is clear.  For the remaining containment, let $T \in A-\mathcal{N}$ be any element of  degree one.  Now, every element of $(\mathcal{J}X)_0$ has the form $\sum_j a_jx_j$ where $a_j \in \mathcal{J}$ and $x_j \in X$ and $\deg(a_j) + \deg(x_j) = 0$ whenever $a_jx_j \neq 0$. However, $T$ is invertible in $A$, and $\sum_j a_jx_j = \sum_j (a_j T^{-\deg(a_j)})(T^{\deg(a_j)}x_j) \in \mathcal{J}_0X_0$.  Using this result for powers of $\mathcal{J}$, and induction, we see that 
$$(\mathcal{J}^n)_0 = (\mathcal{J}_0)^n,$$ for every $n \geq 1$.

Recall that if $X \in \grmod(A)$ is such that $*V(X) = \{\mathcal{N} \}$ (or equivalently, $*\ell_A(X) < \infty$), since  $A-\mathcal{N}$ has a homogeneous element of degree 1,  Proposition \ref{proposition maximal equals minimal} tells us that for every $j$, $\ell_{A_0}(X_j) = *\ell_{A}(X)$ for every $j$.  

Putting all this together, if $\I$ is a GIOD for $M$,  and $X = M/\I^nM$, then we have $X_0 = (M/\I^nM)_0 = M_0/(\I^n)_0 M_0 = M_0/(\I_0)^nM_0$ and thus $\ell_{A_0}(M_0/\I_0^nM_0) = *\ell_A(M/\I^nM) < \infty$ for every $n$.  Therefore, $\I_0$ is an ideal such that  the ordinary Samuel polynomial $p(M_0, \I_0, n)$, constructed in the ungraded case for the $A_0$-module $M_0$,  is defined and   $*p(M, \I,n) = p(M_0, \I_0, n)$.  Therefore, in this case, for $d \geq \deg(*p(M, \I,n)) = \deg(p(M, \I, n))$, $*e(M, \I, d)$ is equal to the  multiplicity $e(M_0, \I_0, d)$ defined  in the ungraded case.

\end{proof}

We do not make a comparison if $(A, \mathcal{N})$ is a *local Noetherian ring with no homogeneous elements of degree 1 in $A-\mathcal{N}$.
\subsection{*Dimension, *Samuel polynomials and GSOPs for *local rings}
In this section, $A$ is a *local Noetherian graded ring with unique *maximal graded ideal $\mathcal{N}$. Here we present an analogue in the graded category to the fundamental theorem of dimension theory for local rings.  This theorem shows the relationship between *Krull dimension, graded systems of parameters, and the degree of the *Samuel polynomial.  Applying the results to the category $\grmod(R)$, for $R$ positively graded and $R_0$ a field, we combine the fundamental dimension theorem for *local rings to Smoke's dimension theorem (\ref{theorem Smoke dimension}). In this case, the order of the pole of the Poincare series at $t=1$, equals the measures from the fundamental *local dimension theorem, which in turn equal the ungraded Krull dimension. This is summarized in corollary \ref{corollary fun thm dimension theory pos graded}.  Returning to the theory of multiplicities, we conclude the section with a sum decomposition of the *Samuel multiplicity by minimal primes (corollary \ref{corollary *Samuel sum decomposition}). 

We start by supposing that  $\mathcal{I}$ is a GIOD for $M$; since $A$ is *local, we've seen that this is true if and only if $*V(M/\I M) = \{ \mathcal{N} \}.$   The previous section shows that the degree of the *Samuel polynomial of $M$ with respect to the $\I$-adic filtration does not depend on the choice of $\I$.  We call this degree $*d(M)$. Of course, $\mathcal{N}$ is always a GIOD for $M$.

If $M \in \grmod(A)$, $M \neq 0$, $*s(M)$ is defined to be the least $s$ such that there exist homogeneous elements $w_1, \ldots , w_s \in \mathcal{N}$ such that  the graded $A$-module $M/(w_1, \ldots , w_s)M$ has finite *length over $A$. Note that $*s(M) = 0$ if and only if  $*\ell_A(M) < \infty$. 

The fundamental theorem for *local dimension theory is:  

\begin{theorem} \label{theorem fun thm graded dimension theory} If $(A,\mathcal{N})$ is a *local Noetherian ring and $M \in \grmod(A)$, then
$$*\dim_A(M) =  *d(M) = *s(M).$$
\end{theorem}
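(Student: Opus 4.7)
The plan is to establish the cycle $*\dim_A(M) \leq *d(M)$, $*d(M) \leq *s(M)$, and $*s(M) \leq *\dim_A(M)$, adapting Serre's proof (\cite{se}) for ungraded local rings by replacing length, dimension, and ideals by their graded analogues. The inequality $*d(M) \leq *s(M)$ is immediate: if homogeneous $w_1, \ldots, w_s \in \mathcal{N}$ realize $*s(M)$, then $\mathcal{I} = (w_1, \ldots, w_s)$ is a GIOD with $s$ homogeneous generators, so Lemma \ref{lemma Samuel props}(c) gives $\deg(*p(M, \mathcal{I}, n)) \leq s$; since $A$ is *local, $*V(\mathcal{I} + Ann_A(M)) = \{\mathcal{N}\} = *V(\mathcal{N} + Ann_A(M))$, and Lemma \ref{lemma Samuel props}(e) identifies this degree with $*d(M)$.

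For $*s(M) \leq *\dim_A(M)$, I induct on $D = *\dim_A(M)$. The base case $D = 0$ forces $*V(M) = \{\mathcal{N}\}$, so $M$ is *Artinian of finite *length and $*s(M) = 0$. For $D > 0$, list the finitely many graded minimal primes $\p_1, \ldots, \p_k$ of $M$ with $*\dim_A(A/\p_i) = D$; each is strictly contained in $\mathcal{N}$ (else $A/\p_i$ would be a graded field of *dim zero), so graded prime avoidance produces a homogeneous $w \in \mathcal{N} \setminus \bigcup_i \p_i$. For any graded minimal prime $\q$ of $M/wM$, one has $\q \supseteq \p$ for some graded minimal prime $\p$ of $M$ with $w \in \q$; if $\p = \p_i$ then $\q \supsetneq \p_i$ gives $*\dim_A(A/\q) \leq D - 1$, and otherwise $*\dim_A(A/\p) \leq D - 1$ already. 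Thus $*\dim_A(M/wM) \leq D - 1$, and by induction $*s(M/wM) \leq D - 1$; prepending $w$ yields $*s(M) \leq D$.

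For $*\dim_A(M) \leq *d(M)$ I induct on $d = *d(M)$. If $d = 0$, then $*p(M, \mathcal{N}, n)$ is eventually constant, so $\mathcal{N}^{n+1} M = \mathcal{N}^n M$ for large $n$; graded Nakayama (Lemma \ref{lemma graded nakayama}) forces $\mathcal{N}^n M = 0$, so $\mathcal{N}$ is the only graded prime in $*V(M)$ and $*\dim_A(M) = 0$. For $d > 0$, take a chain $\p_0 \subsetneq \p_1 \subsetneq \cdots \subsetneq \p_D$ realizing $D = *\dim_A(M)$ with $\p_0$ a graded minimal prime of $M$; by Proposition \ref{lemma minimal prime}, $\p_0 \in Ass_A(M)$, which yields a graded embedding $(A/\p_0)(-d_0) \hookrightarrow M$, and comparing leading terms in Lemma \ref{lemma Samuel props}(d) gives $*d(A/\p_0) \leq d$. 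Now pick a homogeneous $x \in \p_1 \setminus \p_0$, a nonzero (hence nonzerodivisor) element of the graded domain $A/\p_0$; the exact sequence
\[
0 \to (A/\p_0)(-\deg x) \xrightarrow{\cdot x} A/\p_0 \to A/(\p_0 + (x)) \to 0
\]
combined with Lemma \ref{lemma Samuel props}(d) cancels $*p(A/\p_0, \mathcal{N}, n)$ on both sides and forces $*d(A/(\p_0 + (x))) \leq *d(A/\p_0) - 1 < d$. Since the truncated chain $\p_1 \subsetneq \cdots \subsetneq \p_D$ survives in $*V(A/(\p_0 + (x)))$, we get $*\dim_A(A/(\p_0 + (x))) \geq D - 1$; applying the induction hypothesis to this module yields $D - 1 \leq *d(A/(\p_0 + (x))) \leq d - 1$, so $D \leq d$.

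The main obstacle is the third inequality: the cancellation trick requires $x$ to be simultaneously a nonzerodivisor on $A/\p_0$ (making $\cdot x$ injective) and to lie in $\p_1$ (so the upper portion of the chain survives modulo $x$), which is exactly what graded-ness of $\p_0$ and $\p_1$ supplies. The other two inequalities are essentially bookkeeping from the additivity and generator-count properties of the *Samuel polynomial already recorded in Lemma \ref{lemma Samuel props}, together with graded prime avoidance and graded Nakayama.
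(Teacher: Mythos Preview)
Your proof is correct and follows essentially the same three-inequality cycle as the paper's proof, using the same tools (graded Nakayama, Lemma~\ref{lemma Samuel props}, graded prime avoidance). The only cosmetic difference is in the step $*\dim_A(M)\leq *d(M)$: where the paper simply says ``as in \cite{se}, we reduce to the case $M=A/\p$'', you carry out that reduction explicitly by embedding $(A/\p_0)(-d_0)\hookrightarrow M$ via an associated prime and comparing leading terms with Lemma~\ref{lemma Samuel props}(d); also, you work with a maximal chain of length $D$ rather than an arbitrary chain of length $n$, which is harmless once one notes (or proves along the way) that $*d(M)<\infty$ bounds all such chains.
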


\begin{proof}  The proof of this mimics the proof of the analogous theorem in the ungraded, local case given in \cite{se} in Section III.B.2, Theorem 1, but we give a sketch anyway.  First, if $x$ is a homogeneous element of $\mathcal{N}$, let $_xM $ be the graded $A$-module consisting of all elements $m$ of $M$ such that $xm=0$.  If $\deg(x) = d$, then there are short exact sequences in $\grmod(A)$
$$0 \rightarrow _xM(-d) \rightarrow M(-d) \stackrel{\cdot x}{\rightarrow} xM \rightarrow 0,$$ 
$$0 \rightarrow xM \rightarrow M \rightarrow M/xM \rightarrow 0.$$  If $\I$ is a GIOD for $M$, it is also a GIOD for every module in the exact sequences above.  Furthermore, the short exact sequences and Lemma \ref{lemma Samuel props} say that  $*p(_xM, \I, n) - *p(M/xM, \I, n)$ is a polynomial of degree strictly less than $*d(M)$.  It's straightforward to see that $*s(M) \leq *s(M/xM)+1$.

We may as well assume that the GIOD we are using to calculate $*d(M)$ is $\mathcal{N}$.

Next, set $\mathcal{D}(M)$ to be the (finite) set of all  $\p$ in $*V(M)$ with the property that $\sdim_A(M) = \sdim_A(A/\p) = \sdim(A/\p);$ it's important to note that $\mathcal{D}(M)$ could also be defined as the set of all primes in $V(M)$ with $\dim_A(M) = \dim_A(A/\p)$ since the minimal elements in the sets $*V(M)$ and $V(M)$ are exactly the same.   If a homogeneous element $x$ is not in any prime of $\mathcal{D}(M)$, then $\sdim_A(M/xM) < \sdim_A(M);$ this is true for exactly the same reason as in the ungraded case: $*V(M/xM) = *V((x) + Ann_A(M))$.   

Finally, one proceeds to the proof by first arguing that $\sdim_A(M) \leq *d(M)$, then $*d(M) \leq *s(M)$, and lastly, $*s(M) \leq \sdim_A(M)$.

For the first inequality one uses induction on $*d(M)$.  Note that $*d(M) = 0$ means that there is a $q$ such that $*\ell_A(M/\mathcal{N}^iM) = *\ell_A(M/\mathcal{N}^{i+1}M)$ for all $i \geq q$.  But this forces $\mathcal{N}^qM = \mathcal{N}^{q+1}M$ and graded Nakayama's lemma says that $\mathcal{N}^qM = 0$, so that $*V(M)$ has exactly one ideal, $\mathcal{N}$ in it.  By definition, $\sdim_A(M) = 0$.  Supposing that $*d(M) \geq 1$, as in \cite{se}, we reduce to the case $M = A/\p$ for some graded prime ideal $\p$ properly contained in $\mathcal{N}$.  Taking a chain of graded prime ideals $\p \doteq \p_0 \subset \p_1 \subset \cdots \subset \p_n$ in $A$, we may suppose that $n \geq 1$, and thus may choose a homogeneous element $x$ in $\p_1$ that is not in $\p$.  Since $x \notin \p$, but $x \in \p_1$, the chain $\p_1 \subset \cdots \subset \p_n$ corresponds to a chain of primes in $*V(M/xM)$. Since $M = A/\p$, and $x \notin \p$, $_xM = 0$, so that $*p(M/xM, \mathcal{N},n)$ has degree strictly less than $*d(M)$, and by induction, $\sdim_A(M/xM) \leq *d(M/xM)$. Thus, $n-1 \leq \sdim_A(M/xM)  \leq *d(M)-1$ and $n \leq *d(M)$.  This forces $\sdim_A(M) \leq *d(M)$.

For the second inequality, if $x_1, \ldots, x_k$ is a list of homogeneous elements of $\mathcal{N}$ that generate a GIOD $\I$ for $M$, we must have that $*V(\I+Ann_A(M))$ contains only $\mathcal{N}$, so that $*p(M, \I, n)$ and $*p(M, \mathcal{N}, n)$ have the same degree $*d(M)$. But, Lemma \ref{lemma Samuel props} says that $*p(M, \I, n)$ has degree less than or equal to $k$.  Thus, $*d(M) \leq *s(M)$.

For the third inequality, use induction on $\sdim_A(M)$, which we may assume to be at least 1, since  $\sdim_A(M) = 0$ if and only if $\mathcal{N}$ is the only prime in $*V(M)$, so that $M$ has finite *length and $*s(M)= 0$ by definition.  If $\sdim_A(M) \geq 1$, none of the primes in $\mathcal{D}(M)$ are *maximal, so there is a homogeneous element $x \in \mathcal{N}$ such that $x$ is not in any of the primes in $\mathcal{D}(M)$.  We've noted above  that $*s(M) \leq *s(M/xM)+1$ and  $\sdim_A(M) \geq \sdim_A(M/xM) +1. $  These inequalities plus the induction hypotheses give us the result.
\end{proof}

If $R$ is a positively graded ring with $R_0 =k$ a field, then $(R,R_+)$ is a *local ring, so we may apply the fundamental theorem for *local dimension. On the other hand, recall Smoke's dimension theorem (theorem \ref{theorem Smoke dimension}). For any $M \in \grmod(R)$ the hypotheses for Smoke's dimension theorem are satisfied, and we may therefore combine the two dimension theorems.

\begin{corollary}  \label{corollary fun thm dimension theory pos graded}If $R$ is a positively graded Noetherian ring with $R_0$ a field and $M \in \grmod(R)$, then 
$$*\dim_R(M) = *d(M) = *s(M)= s_1(M) = d_1(M) = \dim_R(M).$$
\end{corollary}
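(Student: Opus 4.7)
The plan is to verify that under the hypotheses of the corollary, $R$ simultaneously satisfies the assumptions of the fundamental theorem of *local dimension theory (Theorem \ref{theorem fun thm graded dimension theory}) and of Smoke's dimension theorem (Theorem \ref{theorem Smoke dimension}), and then concatenate the resulting chains of equalities through the common term $*\dim_R(M) = \sdim_R(M)$.

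First I would confirm that $(R,R_+)$ is a *local Noetherian ring. A *maximal ideal of a positively graded ring has the form $\mathcal{N}_0 \oplus R_+$ with $\mathcal{N}_0$ a maximal ideal of $R_0$; since $R_0 = k$ is a field, $\mathcal{N}_0 = 0$, so $R_+$ is the unique *maximal ideal. Because $R$ is Noetherian with $R_0 = k$ a field, the standard structure result recalled in Section~\ref{Introductory Results in the Graded Context} gives that $R$ is a finitely generated graded $k$-algebra, and therefore $R$ has finite Krull dimension; also $R_0$ is trivially Artinian, so the hypotheses of Smoke's theorem hold.

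With this in place, I would apply Theorem \ref{theorem fun thm graded dimension theory} to the *local ring $(R,R_+)$ and the module $M \in \grmod(R)$ to obtain
$$*\dim_R(M) = *d(M) = *s(M).$$
Separately, Smoke's theorem (Theorem \ref{theorem Smoke dimension}) gives
$$d_1(M) = s_1(M) = \sdim_R(M),$$
and Lemma \ref{maintheorem6.2} gives $\sdim_R(M) = \dim_R(M)$ for positively graded rings of finite Krull dimension. Since $\sdim_R(M)$ and $*\dim_R(M)$ are the same invariant under the paper's notational conventions, stringing the two chains together through this common middle term yields the desired equality.

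The main obstacle, if any, is a bookkeeping one: making sure all the auxiliary hypotheses of both theorems (Noetherian, $R_0$ Artinian, finite Krull dimension, *local) are genuinely available from the single assumption that $R_0$ is a field. Once that reduction is made cleanly, the corollary is essentially immediate from gluing together two already-proved results, and there is no further combinatorial or filtration argument to perform.
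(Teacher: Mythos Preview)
Your proposal is correct and follows essentially the same approach as the paper: the paper's ``proof'' is the short paragraph immediately preceding the corollary, which observes that $(R,R_+)$ is *local (since $R_0$ is a field), invokes Theorem~\ref{theorem fun thm graded dimension theory} for one chain of equalities, invokes Smoke's theorem for the other, and glues them through $\sdim_R(M)$. Your added care in explicitly checking the auxiliary hypotheses (finite Krull dimension, $R_0$ Artinian) is appropriate and matches what the paper takes for granted.
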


Going back to the definition of a GSOP for the $A$-module $M$, as a corollary to Theorem \ref{theorem fun thm graded dimension theory}. we have
\begin{corollary}  If $(A, \mathcal{N})$ is a *local graded Noetherian ring and $M \in \grmod(A)$, then a GSOP exists for $M$, and the length of every GSOP is equal to 
$*\dim_A(M) = *d(M) = *s(M).$  Moreover, if $A-\mathcal{N}$ has a homogeneous element $T$ of degree 1, necessary invertible, and $d(M_0)$ is the degree of the ordinary Samuel polynomial $p(M_0, \mathcal{N}_0, n)$, then $d(M_0) = *d(M)$, and if $x_1, \ldots, x_D$ is a GSOP for $M$, where $D = \sdim_A(M) = *d(M) = d(M_0)$, then $x_1T^{-e_1}, \ldots, x_DT^{-e_D}$ is an ordinary system of parameters for $M_0$ as an $A_0$-module, if $e_i = \deg(x_i)$.                                                
\end{corollary}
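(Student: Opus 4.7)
The plan is to split the corollary into its two independent assertions, and in each case reduce immediately to results already proved. The first sentence is essentially a restatement of Theorem \ref{theorem fun thm graded dimension theory}, while the second requires one small algebraic observation about how the degree-$1$ invertible element $T$ converts graded ideals in $A$ to ordinary ideals in $A_0$, followed by an application of Theorem \ref{theorem star e equals e, element of degree 1}. I do not expect any major obstacle.

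For the first sentence, the length of a GSOP is $\sdim_A(M)$ by definition, and the chain of equalities $\sdim_A(M) = *d(M) = *s(M)$ is exactly Theorem \ref{theorem fun thm graded dimension theory}. Existence then comes for free from the definition of $*s(M)$: setting $s = *s(M)$, there exist homogeneous $w_1, \ldots, w_s \in \mathcal{N}$ with $*\ell_A(M/(w_1, \ldots, w_s)M) < \infty$, and since $s = \sdim_A(M)$, this sequence qualifies as a GSOP.

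For the second sentence, I would first deduce $d(M_0) = *d(M)$ directly from Theorem \ref{theorem star e equals e, element of degree 1} applied with $\I = \mathcal{N}$: that result gives $*p(M, \mathcal{N}, n) = p(M_0, \mathcal{N}_0, n)$, and equating degrees yields the identity. For the system of parameters claim, set $\I \doteq (x_1, \ldots, x_D)$ and $y_i \doteq x_i T^{-e_i}$; since $T \notin \mathcal{N}$ is invertible and each $x_i \in \mathcal{N}$, each $y_i$ lies in $\mathcal{N}_0$, and $\I$ is a GIOD for $M$ because $x_1, \ldots, x_D$ is a GSOP. The key algebraic step is to show $\I_0 = (y_1, \ldots, y_D)A_0$. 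The containment $\supseteq$ is immediate. Conversely, any $z \in \I_0$ can be written $z = \sum_i a_i x_i$ with each $a_i$ homogeneous of degree $-e_i$, and invertibility of $T$ lets one rewrite $z = \sum_i (a_i T^{e_i}) y_i$ with each coefficient $a_i T^{e_i} \in A_0$; this is exactly the trick used in the proof of Theorem \ref{theorem star e equals e, element of degree 1}. Applying that theorem to this $\I$ then gives $\ell_{A_0}(M_0/(y_1, \ldots, y_D)M_0) = *\ell_A(M/\I M) < \infty$, so $(y_1, \ldots, y_D)$ is an ordinary ideal of definition for $M_0$. By the classical local fundamental theorem of dimension theory applied to $(A_0, \mathcal{N}_0)$, $d(M_0)$ is the minimum possible length of such a defining sequence, and since our sequence has length $D = d(M_0)$, it is an ordinary system of parameters for $M_0$.
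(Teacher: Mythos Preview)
Your proposal is correct and follows essentially the same route as the paper's proof: both reduce the first sentence to Theorem \ref{theorem fun thm graded dimension theory}, invoke Theorem \ref{theorem star e equals e, element of degree 1} to equate $*d(M)$ with $d(M_0)$, verify that $\I_0$ is generated by the $x_iT^{-e_i}$ via the degree-shifting trick, and then appeal to the ungraded dimension theorem for $(A_0,\mathcal{N}_0)$. Your write-up simply makes explicit a few details (e.g.\ why $(A_0,\mathcal{N}_0)$ is local and why $D=d(M_0)$ forces the sequence to be a system of parameters) that the paper leaves implicit.
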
 
\begin{proof}  The first statement is clear using \ref{theorem fun thm graded dimension theory}; for the second use Theorem  \ref{theorem star e equals e, element of degree 1}  to see that $*d(M)= d(M_0)$;  if $x_1, \ldots, x_D$ generate a GIOD $\I$ for $M$, then $\I_0$ is generated by $x_1T^{-e_1}, \ldots, x_DT^{-e_D}$. Therefore, the ungraded dimension theorem  ensures that $D = d(M_0) = \dim_{A_0}(M_0)$, so \newline $x_1T^{-e_1}, \ldots, x_DT^{-e_D}$  is an ordinary system of parameters for $M_0$.
\end{proof} 
We also have a corollary to Corollary \ref{corollary sum decomp for multiplicity}; here $\mathcal{D}(M)$ is defined as the set of minimal primes of maximal dimension (as in the proof of Theorem \ref{theorem fun thm graded dimension theory})

\begin{corollary} \label{corollary *Samuel sum decomposition} Suppose that $(A, \mathcal{N})$ is a *local graded Noetherian ring, $M \in \grmod(A)$, $\I$ is a GIOD for $M$ and $M^{\bullet}$ is a graded filtration of $M$ such that $0 = M^0 \subset M^1 \subset \cdots M^{N-1} \subset M^N = M,$ and, for each $N \geq i \geq 1$, there are graded prime ideals $\p_i$ in $A$, integers $d_i$ and graded isomorphisms of $A$-modules $(A/\p_i )(d_i) \cong M^{i}/M^{i-1}$.   Then, if $D \doteq \sdim_A(M)$,

$$*e(M, \I, D) = \sum_{\p \in \mathcal{D}(M)} *\ell_{A_{[\p]}}(M_{[\p]}) (*e(A/\p, \I, D)).$$ 
\end{corollary}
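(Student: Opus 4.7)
The plan is to derive this from Corollary \ref{corollary sum decomp for multiplicity}, applied to the given filtration $M^{\bullet}$, by identifying the data appearing there with the intrinsic data of $M$. Setting $D' \doteq \max_i \deg(*p(A/\p_i,\I,n))$ and $\mathcal{D}(M^{\bullet}) \doteq \{\p_j : \deg(*p(A/\p_j,\I,n)) = D'\}$, that corollary reads
\[
*e(M,\I,D') \;=\; \sum_{\p \in \mathcal{D}(M^{\bullet})} n_{\p}(M^{\bullet})\;\bigl(*e(A/\p,\I,D')\bigr).
\]
So the proof reduces to three identifications: (i) $D' = \sdim_A(M) = D$; (ii) $\mathcal{D}(M^{\bullet}) = \mathcal{D}(M)$; and (iii) $n_{\p}(M^{\bullet}) = *\ell_{A_{[\p]}}(M_{[\p]})$ for each $\p \in \mathcal{D}(M)$.

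For (i) and (ii), I would first apply the fundamental theorem of *local dimension theory, Theorem \ref{theorem fun thm graded dimension theory}, to each quotient $A/\p_i$ (for which $\I$ is a GIOD by part (i) of Corollary \ref{corollary sum decomp for multiplicity}), yielding $\deg(*p(A/\p_i,\I,n)) = *d(A/\p_i) = \sdim_A(A/\p_i)$. Lemma \ref{corollary the filtration graded} tells us that every $\p_i$ lies in $*Supp_A(M)$, so $Ann_A(M) \subseteq \p_i$ and hence $\sdim_A(A/\p_i) \leq \sdim_A(M)$; conversely, the same lemma says that each minimal prime of $M$ appears among the $\p_j$, and for such a prime $\sdim_A(A/\p) = \sdim_A(M)$ by the definition of $\sdim_A(M)$. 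This proves $D' = \sdim_A(M) = D$. Moreover, if some $\p_j$ is \emph{not} minimal over $Ann_A(M)$, then one can choose a minimal prime $\q \subsetneq \p_j$, which is automatically graded by Proposition \ref{lemma minimal prime}; prepending $\q$ to any chain of graded primes based at $\p_j$ strictly increases its length, forcing $\sdim_A(A/\p_j) < \sdim_A(A/\q) \leq D$. Hence the elements of $\mathcal{D}(M^{\bullet})$ are exactly the minimal primes of $M$ of maximal *dimension, i.e.\ $\mathcal{D}(M^{\bullet}) = \mathcal{D}(M)$.

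For (iii), the last sentence of Lemma \ref{corollary the filtration graded} says that for any minimal prime $\p$ of $M$, the number $n_{\p}(M^{\bullet})$ of times $A/\p$ (possibly shifted) occurs as a graded subquotient of $M^{\bullet}$ is independent of the chosen graded filtration and equals the ordinary length $\ell_{A_{\p}}(M_{\p})$. Theorem \ref{theorem *composition series exists for Mgrp when p is minimal} then gives $\ell_{A_{\p}}(M_{\p}) = *\ell_{A_{[\p]}}(M_{[\p]})$, so (iii) follows. Substituting (i)--(iii) into the displayed formula produces the desired equality.

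The proof is essentially an assembly of three previously established results, and I do not expect a serious technical obstacle. The subtlest point is the bookkeeping in (i)--(ii): one must verify that the set $\mathcal{D}(M^{\bullet})$, defined via degrees of *Samuel polynomials of subquotients, agrees with the intrinsic set $\mathcal{D}(M)$ defined via *dimension and minimality over $Ann_A(M)$. This reconciliation relies simultaneously on the fundamental *local dimension theorem, on the sandwich $Ass_A(M) \subseteq \{\p_j\} \subseteq *Supp_A(M)$ from Lemma \ref{corollary the filtration graded}, and on the automatic gradedness of minimal primes.
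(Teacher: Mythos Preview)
Your proof is correct and follows essentially the same approach as the paper: both derive the result from Corollary \ref{corollary sum decomp for multiplicity} by using Theorem \ref{theorem fun thm graded dimension theory} to identify $D' = D$ and $\mathcal{D}(M^{\bullet}) = \mathcal{D}(M)$, and then Lemma \ref{corollary the filtration graded} together with Theorem \ref{theorem *composition series exists for Mgrp when p is minimal} to identify $n_{\p}(M^{\bullet}) = *\ell_{A_{[\p]}}(M_{[\p]})$. Your treatment of (ii) is in fact more explicit than the paper's, which simply asserts $\mathcal{D}(M^{\bullet}) = \mathcal{D}(M)$; one small phrasing point is that ``for such a prime $\sdim_A(A/\p) = \sdim_A(M)$'' only holds for \emph{some} minimal prime (namely one where a maximal chain begins), not all of them, but that is all you need for $D' \geq D$.
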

\begin{proof}  Lemma \ref{corollary the filtration graded} tells us  that, for every minimal prime $\p$ for $M$, there is at least one subquotient of the filtration isomorphic to the graded $A$-module $A/\p$, possibly shifted.  Therefore, adding the *local hypothesis,
\begin{itemize}
\item $\mathcal{D}(M^{\bullet}) = \mathcal{D}(M)$ since we now know that, for every shift $d$, the degree of  $*p((A/\p)(d), \mathcal{I}, n)= *p(A/\p,\mathcal{I},n)$ is independent of the choice of $\mathcal{I}$ and is equal to $\sdim_A(A/\p)$.  Theorem \ref{theorem fun thm graded dimension theory}  also tells us that  the $D$ in this corollary is exactly the same $D$ as in Corollary \ref{corollary sum decomp for multiplicity}.
\item  Moreover, for every prime $\p$ in $\mathcal{D}(M)$, $A/\p$, possibly shifted, occurs exactly $*\ell_{A_{[\p]}}(M_{[\p]}) = \ell_{A_{\p}}(M_{\p})$  times (using Theorem \ref{theorem *composition series exists for Mgrp when p is minimal} and Corollary \ref{corollary sum decomp for multiplicity}) as a subquotient of the filtration $M^{\bullet}$, so that $n_{\p}(M^{\bullet}) = *\ell_{A_{[\p]}}(M_{[\p]}) = \ell_{A_{\p}}(M_{\p})$.
\end{itemize}

\end{proof}

\section{Koszul complexes in grmod(A) and *Koszul multiplicities}

In this section, $A$ is any graded Noetherian ring and $M \in \grmod(A)$.  We again follow the discussion in \cite{se}.

The definition of a complex of modules in $\grmod(A)$ is as usual: this is a sequence $(\mathbf{M}, \partial) $
$$\cdots \rightarrow M_j \stackrel{\partial_j}{\rightarrow} M_{j-1} \stackrel{\partial_{j-1}}{\rightarrow} \cdots \rightarrow \stackrel{\partial_1}{\rightarrow} M_0$$
of objects and morphisms in $\grmod(A)$, such that $\partial \partial = 0$ everywhere.  The sequence of morphisms $\partial$ above is called the differential for the complex.

The subscripts $j$ seem assigned ambiguously, but here's what we mean: If $(\mathbf{M},\partial)$ is a complex in $\grmod(A)$ as above, then the set of elements of $M_j$ of degree $i$ is equal to
$$(M_j)_i \doteq M_{j,i}.$$

In other words, when speaking of a complex in $\grmod(A)$, a single integer subscript denotes the sequential index of the complex, and a doubly-indexed subscript is read as ``first index is the complex index, second is the graded-module index". We will often suppress the internal gradings, so if there is just one subscript, it refers to the ``complex index".  Hopefully this won't be too confusing.

To further set notation, we will regard any $M$ in $\grmod(A)$ as a ``complex concentrated in degree 0"--this is the complex with all differentials equal to zero, with $M_i = 0$, if the ``complex-index" $i \neq 0$, and $M_0 = M$, for ``complex-index " $0$.

The homology groups of a complex $(\mathbf{M}, \partial)$ are defined as $``\ker \partial/ \mbox{im} \;\partial"$ of course, and are also in $\grmod(A)$:

$$H_j(\mathbf{M})_i = \ker(\partial:M_{j,i} \rightarrow M_{j-1,i})/\mbox{im}(\partial:M_{j+1,i} \rightarrow M_{j,i}).$$

Morphisms of graded complexes and short exact sequences of graded complexes are defined in the usual way.

A short exact sequence of graded complexes in $\grmod(A)$ gives rise to a long exact sequence on homology:  if $0 \rightarrow \mathbf{A} \xra{\alpha} \mathbf{B} \xra{\beta} \mathbf{C} \rightarrow 0$ is a short exact sequence of graded complexes in $\grmod(A)$, there exists a graded morphism $\mathbf{\omega}$ of complex degree $-1$  such that the sequence $$\cdots \xra{\omega_{j+1}} H_j(\mathbf{A}) \xra{\alpha_*} H_j(\mathbf{B}) \xra{\beta_*} H_j(\mathbf{C}) \xra{\omega_j} H_{j-1}(\mathbf{A}) \xra{\alpha_*} \cdots $$
is an exact sequence in $\grmod(A)$.

\begin{definition}

The \textbf{*Euler characteristic} of a complex $(\mathbf{M}, \partial)$ in $\grmod(A)$ is defined when $(\mathbf{M}, \partial)$ is  such that each $A$-module $M_i$  has $*\ell_A(M_i) < \infty$ and for all but finitely many $i$, $*\ell_A(M_i) = 0$ . Given these conditions, the following sum is well defined: $$*\chi(\mathbf{M}) \doteq \Sigma_i (-1)^i *\ell_A(M_i). $$
\end{definition}

Since $*\ell$  sums over short exact sequences, we get the following lemma.
\begin{lemma} \label{lemma euler splits over ses} Let $0 \rightarrow \mathbf{A} \rightarrow \mathbf{B} \rightarrow \mathbf{C} \rightarrow 0$ be a short exact sequence of graded complexes in $\grmod(A)$.  Then, the *Euler characteristic of $\mathbf{B}$ is defined if and only if the *Euler characteristics of $\mathbf{A}$ and $\mathbf{C}$ are, and  $*\chi(\mathbf{B}) = *\chi(\mathbf{A}) + *\chi(\mathbf{C})$.
\end{lemma}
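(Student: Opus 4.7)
The plan is to deduce everything from the degreewise short exact sequences and the already-established additivity of $*\ell_A$ on short exact sequences in $\grmod(A)$ (the first bullet point in Section 3).

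First I would observe that a short exact sequence of graded complexes $0\to\mathbf{A}\xrightarrow{\alpha}\mathbf{B}\xrightarrow{\beta}\mathbf{C}\to 0$ unpacks, in each complex degree $i$, into a short exact sequence
\[
0\longrightarrow A_i\xrightarrow{\alpha_i} B_i\xrightarrow{\beta_i} C_i\longrightarrow 0
\]
in $\grmod(A)$. Applying the cited additivity of $*\ell_A$ to this sequence yields that $*\ell_A(B_i)<\infty$ if and only if both $*\ell_A(A_i)<\infty$ and $*\ell_A(C_i)<\infty$, and in that case
\[
*\ell_A(B_i)=*\ell_A(A_i)+*\ell_A(C_i).
\]

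Second, I would check that the vanishing condition ``$*\ell_A(M_i)=0$ for all but finitely many $i$'' transfers across the short exact sequence. Since $*\ell_A(M)=0$ holds precisely when $M=0$ (a *composition series of length $0$ is just $0=M^0=M$), and since $\alpha_i$ is injective and $\beta_i$ is surjective, $B_i=0$ forces $A_i=0$ and $C_i=0$; conversely $A_i=C_i=0$ forces $B_i=0$ by exactness. Combined with the first step, this shows $*\chi(\mathbf{B})$ is defined if and only if both $*\chi(\mathbf{A})$ and $*\chi(\mathbf{C})$ are defined.

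Finally, assuming all three *Euler characteristics are defined, only finitely many terms in each of the three sums are nonzero, so I may rearrange freely and compute
\[
*\chi(\mathbf{B})=\sum_i(-1)^i\,*\ell_A(B_i)=\sum_i(-1)^i\bigl(*\ell_A(A_i)+*\ell_A(C_i)\bigr)=*\chi(\mathbf{A})+*\chi(\mathbf{C}).
\]
There is no real obstacle here: the entire argument reduces to applying the additivity of $*\ell_A$ degreewise and then summing with alternating signs. The only subtlety worth flagging is being careful that ``$*\ell_A=0$'' genuinely means ``the module is zero'', which is why the finite-support hypothesis propagates correctly across the exact sequence.
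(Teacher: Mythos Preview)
Your proof is correct and takes essentially the same approach as the paper: the paper simply states that the lemma follows since $*\ell$ sums over short exact sequences, and your argument is precisely the degreewise application of that additivity, spelled out in detail.
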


If the two conditions for a well-defined *Euler characteristic of a complex are not met, there may be a way to salvage the situation by passing to homology. 

\begin{definition}
Let $(\mathbf{M}, \partial)$ be a complex in $\grmod(A)$ such that for every $i$, $H_i(\mathbf{M})$ has finite *length over $A$, and for $i >> 0$ $*\ell(H_i(\mathbf{M})) = 0$.  We define the *Euler characteristic of the homology to be $*\chi(\mathbf{H(M)}) \doteq \Sigma_i (-1)^i *\ell_A(H_i (\mathbf{M}))$.
\end{definition}

With the proof exactly analogous to that in the ungraded case, we have

\begin{theorem}
\label{theorem Euler of homology equals Euler of complex}
When the *Euler characteristic $*\chi(\mathbf{M})$ is defined, then $*\chi(\mathbf{H}(\mathbf{M}))$ is also defined, and  we have that $*\chi(\mathbf{M}) = *\chi(\mathbf{H(M)})$.
\end{theorem}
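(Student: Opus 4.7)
The plan is to mirror the standard ungraded argument, leaning on the additivity of $*\ell_A$ over short exact sequences in $\grmod(A)$ that was already established in Section~3. For each $i$, set $Z_i \doteq \ker(\partial_i)$ and $B_i \doteq \im(\partial_{i+1})$. Because the differential is a morphism in $\grmod(A)$, both $Z_i$ and $B_i$ are graded $A$-submodules of $M_i$, and there are canonical short exact sequences in $\grmod(A)$:
$$0 \to Z_i \to M_i \to B_{i-1} \to 0, \qquad 0 \to B_i \to Z_i \to H_i(\mathbf{M}) \to 0.$$

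First I would verify that $*\chi(\mathbf{H}(\mathbf{M}))$ is well-defined under the stated hypothesis. Since $H_i(\mathbf{M})$ is a graded subquotient of $M_i$, additivity of $*\ell_A$ gives $*\ell_A(H_i(\mathbf{M})) \leq *\ell_A(M_i) < \infty$ for every $i$. Moreover, since $*\ell_A(M_i) = 0$ (equivalently $M_i = 0$) for all but finitely many $i$, the same holds for $Z_i$, $B_i$, and hence for $H_i(\mathbf{M})$, so only finitely many terms in the alternating sum defining $*\chi(\mathbf{H}(\mathbf{M}))$ are nonzero.

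Next, applying additivity of $*\ell_A$ to the two short exact sequences yields
$$*\ell_A(M_i) \;=\; *\ell_A(Z_i) + *\ell_A(B_{i-1}) \;=\; *\ell_A(B_i) + *\ell_A(H_i(\mathbf{M})) + *\ell_A(B_{i-1}).$$
Multiplying by $(-1)^i$ and summing over $i$ produces
$$*\chi(\mathbf{M}) \;=\; \sum_i (-1)^i \, *\ell_A(H_i(\mathbf{M})) \;+\; \sum_i (-1)^i \bigl(*\ell_A(B_i) + *\ell_A(B_{i-1})\bigr).$$
Reindexing $j = i-1$ in the $B_{i-1}$ contribution shows that the second sum telescopes to zero, which is legitimate precisely because only finitely many $B_i$ are nonzero. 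What remains is $*\chi(\mathbf{M}) = *\chi(\mathbf{H}(\mathbf{M}))$.

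There is no serious obstacle here: the graded hypothesis on $\partial$ guarantees that the kernels, images, and homology groups all live in $\grmod(A)$, after which every step reuses tools already developed in the paper (additivity of $*\ell_A$ on short exact sequences, and the subquotient bound on *length). The only point requiring care is justifying the telescoping cancellation; this is safe because the finiteness hypothesis on the $M_i$ reduces both sums to finite ones, so there are no convergence issues and the reindexing is term-by-term.
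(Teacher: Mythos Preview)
Your proposal is correct and is precisely the standard ungraded argument, which is exactly what the paper invokes (it simply states that the proof is ``exactly analogous to that in the ungraded case'' without spelling out details). Your write-up even supplies the care the paper omits, namely that the differentials being morphisms in $\grmod(A)$ forces $Z_i$, $B_i$, and $H_i(\mathbf{M})$ to live in $\grmod(A)$, and that the finiteness hypothesis legitimizes the telescoping step.
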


Note however that the converse is not necessarily true; i.e. $*\chi$ of the homology may be defined but $*\chi$ of the complex not.

Using the additivity of $*\ell$, and the long exact sequence on homology, if $\mathbf{A} \rightarrowtail \mathbf{B} \twoheadrightarrow \mathbf{C}$ is a short exact sequence of graded complexes in $\grmod(A)$ such that the *Euler characteristic of the homology of each complex is defined, then, $$*\chi(\mathbf{H(B)}) = *\chi(\mathbf{H(A)}) + *\chi(\mathbf{H(C)}).$$

If $A$ is a graded ring, we may do homological algebra in $\grmod(A)$ quite analogously to how it's done in the  ungraded case.  In particular, graded $A$-modules $Tor_i^A(M,N) \in \grmod(A)$ for every $i \geq 0$, and $M,N \in \grmod(A)$ may be defined mimicking the constructions and definitions in the ungraded category: beginning with  the graded tensor product $M \otimes_A N$. (For the definition of the graded tensor product of graded modules over a graded ring, see \cite{GRO}.) The tensor product $M \otimes_A N$ has a natural grading on it:  if $m \in M_i$ and $n \in N_j$ are homogeneous elements, then $\deg(m\otimes n) = i + j$. Then, one proceeds to talk about projective resolutions, and arrives at the definition of $Tor_i^A(M,N) \in \grmod(A)$.  We do not give further details here.

\subsection{The Koszul complex}

Standard properties of the Koszul complex in the ungraded case may be found in \cite{se}, Chapter IV.  We use Serre's notation: if $\bar{x} \doteq x_1, \ldots, x_u$ is a sequence of elements in $A$, then the Koszul complex is denoted by $\mathbf{K}(\bar{x},A)$.  

If we pass to the graded category, with $A$ a graded ring, and choose a sequence $\bar{x} \doteq x_1, \ldots, x_u$ of homogeneous elements of $A$, the definition of the graded Koszul complex is briefly summarized as follows.   Recall   that  the tensor product of graded complexes $\mathbf{C} \otimes_A \mathbf{D}$ is defined exactly analogously to the ungraded case, and is again a  graded complex; keep in mind in particular the definition of the differential of a tensor product of complexes:  if $c \in C_i$ and $d \in D_j$, then $\partial_{C \otimes_A D}(c \otimes d) = \partial_C(c) \otimes d + (-1)^i c \otimes \partial_D(c).$  Starting with the case $u=1$, $\mathbf{K}(x_1,A)$ is the two-term complex in $\grmod(A)$
$$K_1(x_1, A) = A(-d) \stackrel{\cdot x_1}{\rightarrow} K_0(x_1,A) = A,$$ where $d$ is the degree of $x_1$.  Then, if $\bar{x} = x_1, \ldots, x_u$ is a sequence of homogeneous elements in $A$,
$$\mathbf{K}(\bar{x},A) \doteq \mathbf{K}(x_1,A) \otimes_A \cdots \otimes_A \mathbf{K}(x_u,A).$$ 

If $M \in \grmod(A)$, the Koszul complex associated to the graded $A$-module $M$ and $\bar{x}$ is:
$$\mathbf{K}^A(\bar{x},M) \doteq \mathbf{K}(\bar{x},A) \otimes_A M.$$  If we're always regarding a graded abelian group $M$ as an $A$-module, for a fixed graded ring $A$, we will often delete the superscript $A$.

Setting notation,  $K_0(x_i, A)$ is identified with $A$ as a free, graded $A$-module (in other words, the free generator lies in degree zero, and is identified with $1\in A_0$).  For $K_1$, choose $e_{x_i}$ of  $\deg(x_i)$ and identify $K_1(x_i,A)$ with the free graded $A$-module generated by the homogeneous element $e_{x_i}$.  Then, $K_p(\bar{x}, A)$ is the free graded $A$-module isomorphic to the free graded $A$-module generated by the homogeneous elements $e_{x_{i_1}} \otimes \cdots \otimes e_{x_{i_p}}$ of degree $\deg(x_{i_1}) + \cdots +\deg(x_{i_p})$ where $i_1 < \cdots < i_p$, so is isomorphic to the graded exterior product 
$$\Lambda^p(A(-\deg(x_1)) \oplus \cdots \oplus A(-\deg(x_u))).$$ In addition, both the $p$th part of the Koszul complex $\mathbf{K}^A(\bar{x}, M)$, and its differential have exactly the same form as described in \cite{se}, IV.A.2 in the ungraded case.  A particular consequence  is that, as a graded $A$-module, $K_p^A(\bar{x}, M)$ is a direct sum of ${u \choose p}$ copies of $M$, each shifted:  the copy associated to the multi-index $i_1 <  \cdots i_p$ looks like $M(-(\deg(x_{i_1}) + \cdots +\deg(x_{i_p})))$;  if $\I$ is the graded ideal of $A$ generated by $x_1, \ldots, x_u$, and $k \geq 0$, then $K_p^A(\bar{x}, M)/ \I^k K_p^A(\bar{x}, M)$ is, as a graded $A$-module, isomorphic to ${u \choose p}$ copies of $M/\I^kM$, each shifted as described above.

The $p$th homology group of the graded Koszul complex $\mathbf{K}^A(\bar{x},M)$ is denoted by $H_{p}(\bar{x}, M),$ or $H^A_p(\bar{x}, M)$ if we need to emphasize the role of $A$. These homology groups are also graded $A$-modules.

\begin{definition}     Suppose that $x_1, \ldots , x_u$ is a sequence of nonzero nonunit homogeneous elements in $A$ and $M \in \grmod(A)$. This sequence is a $M$-sequence if and only if $x_1$ is not a zero-divisor on $M$, and
for each $i>1$, $x_i$ is not a zero-divisor on $M/(x_1, \ldots, x_{i-1})M$. 
\end{definition}

The following may all be proved as in the ungraded case (see \cite{se}, Chapter IV): 
\begin{proposition}  Let $A$ be a graded ring and $M \in \grmod(A)$.
If $\bar{x}$ is a $M$-sequence, then the Koszul complex $\mathbf{K}^A(\bar{x},M)$ is acyclic.  As in the ungraded case, $H_0^A(\bar{x},M) = M/(x_1,\ldots,x_u)M$. 
\end{proposition}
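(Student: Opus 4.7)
The plan is to prove both statements simultaneously by induction on $u$, following Serre's strategy in Chapter IV of \cite{se} but paying attention to internal degree shifts. The base case $u=1$ is immediate: by construction $\mathbf{K}^A(x_1,M)$ is the two-term complex
$$0 \to M(-\deg x_1) \xrightarrow{\cdot x_1} M \to 0,$$
and the $M$-sequence hypothesis says $x_1$ is a non-zero-divisor on $M$, so the differential is an injection of graded $A$-modules of degree zero. Hence $H_1^A(x_1,M)=0$ and $H_0^A(x_1,M)=M/x_1 M$.

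For the inductive step, set $\bar{x}'=x_1,\ldots,x_{u-1}$ and $d=\deg(x_u)$. Using the definition $\mathbf{K}^A(\bar{x},M)=\mathbf{K}^A(\bar{x}',M)\otimes_A\mathbf{K}(x_u,A)$ together with the two-term structure $\mathbf{K}(x_u,A)=(A(-d)\xrightarrow{\cdot x_u}A)$, one has $K_p^A(\bar{x},M)\cong K_p^A(\bar{x}',M)\oplus K_{p-1}^A(\bar{x}',M)(-d)$, and the inclusion/projection on summands gives a short exact sequence of graded complexes in $\grmod(A)$
$$0 \to \mathbf{K}^A(\bar{x}',M) \to \mathbf{K}^A(\bar{x},M) \to \mathbf{K}^A(\bar{x}',M)[-1](-d) \to 0,$$
where $[-1]$ denotes the shift up by one in complex-index. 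This is verified by the same computation as in the ungraded case, and all maps are graded of internal degree zero because $x_u$ is homogeneous of degree $d$. Passing to the long exact sequence on homology (which holds in $\grmod(A)$ by the general remarks made just before the definition of the Koszul complex), the connecting homomorphism is, up to sign, multiplication by $x_u$, a graded $A$-homomorphism
$$H_{p-1}^A(\bar{x}',M)(-d) \xrightarrow{\cdot x_u} H_{p-1}^A(\bar{x}',M).$$

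The induction hypothesis gives $H_p^A(\bar{x}',M)=0$ for $p\geq 1$ and $H_0^A(\bar{x}',M)=M/(x_1,\ldots,x_{u-1})M$. Plugging these into the long exact sequence, for $p\geq 2$ we sandwich $H_p^A(\bar{x},M)$ between zeros, so it vanishes. For $p=1,0$ what remains is
$$0 \to H_1^A(\bar{x},M) \to \bigl(M/(x_1,\ldots,x_{u-1})M\bigr)(-d) \xrightarrow{\cdot x_u} M/(x_1,\ldots,x_{u-1})M \to H_0^A(\bar{x},M) \to 0.$$
The $M$-sequence hypothesis says precisely that $x_u$ is a non-zero-divisor on $M/(x_1,\ldots,x_{u-1})M$, so the middle arrow is injective, yielding $H_1^A(\bar{x},M)=0$ and $H_0^A(\bar{x},M)=M/(x_1,\ldots,x_u)M$, completing the induction.

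The argument is conceptually identical to Serre's, so the only real work is bookkeeping: producing the short exact sequence of complexes as a sequence in $\grmod(A)$ (rather than merely in the ungraded category) and confirming that the connecting map sits in the correct internal degree. Since the free generator of $K_1(x_u,A)$ is placed in degree $d$ by construction, the shifts $(-d)$ appear in exactly the right places to make every map above degree-preserving, so I do not anticipate any substantive obstacle beyond this verification.
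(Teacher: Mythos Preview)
Your argument is correct and is precisely the standard Serre approach that the paper defers to: the paper gives no self-contained proof here but simply asserts that the ungraded argument in \cite{se}, Chapter IV carries over, and your induction via the short exact sequence $0\to\mathbf{K}^A(\bar{x}',M)\to\mathbf{K}^A(\bar{x},M)\to\mathbf{K}^A(\bar{x}',M)[-1](-d)\to 0$ with connecting map $\pm x_u$ is exactly that argument, with the internal-degree bookkeeping handled correctly.
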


Conversely, in the *local Noetherian case one has
\begin{proposition}  If $(A, \mathcal{N})$ is a *local Noetherian ring, and $M \in \grmod(A)$, then the following are equivalent, for a sequence of homogeneous elements $\bar{x} \doteq x_1, \ldots, x_u$ of $\mathcal{N}$:
\begin{itemize}
\item $H_p^A(\bar{x}, M) = 0,$ for $p \geq 1$.
\item $ H_1^A(\bar{x}, M) = 0.$
\item  $\bar{x}$ is an $M$-sequence in $A$.
\end{itemize}
\end{proposition}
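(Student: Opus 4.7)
The plan is to establish the cycle of implications $(3) \Rightarrow (1) \Rightarrow (2) \Rightarrow (3)$. The first is immediate from the preceding proposition, and $(1) \Rightarrow (2)$ is trivial by specializing to $p=1$. The real content lies in $(2) \Rightarrow (3)$, which I would prove by induction on $u$. For the base case $u=1$, the Koszul complex $\mathbf{K}^A(x_1, M)$ is simply the two-term complex $M(-\deg x_1) \xrightarrow{\cdot x_1} M$, so $H_1^A(x_1, M) = 0$ says exactly that $x_1$ is not a zero-divisor on $M$.

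For the inductive step, let $u \geq 2$ and set $\bar{x}' = x_1, \ldots, x_{u-1}$. Use the factorization $\mathbf{K}^A(\bar{x}, M) = \mathbf{K}^A(\bar{x}', M) \otimes_A \mathbf{K}(x_u, A)$ together with the tautological short exact sequence $0 \to A \to \mathbf{K}(x_u, A) \to A(-\deg x_u)[1] \to 0$ (where $[1]$ shifts the complex index up by one) to obtain a short exact sequence of graded complexes. Its long exact sequence in homology takes the form
$$\cdots \to H_p^A(\bar{x}', M) \to H_p^A(\bar{x}, M) \to H_{p-1}^A(\bar{x}', M)(-\deg x_u) \xrightarrow{\pm x_u} H_{p-1}^A(\bar{x}', M) \to H_{p-1}^A(\bar{x}, M) \to \cdots$$
in $\grmod(A)$, with connecting map given (up to sign) by multiplication by $x_u$.

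From the piece at $p=2$, the hypothesis $H_1^A(\bar{x}, M)=0$ yields that $\pm x_u \colon H_1^A(\bar{x}', M)(-\deg x_u) \to H_1^A(\bar{x}', M)$ is surjective; i.e., $x_u \cdot H_1^A(\bar{x}', M) = H_1^A(\bar{x}', M)$. Since the Koszul modules $K_p^A(\bar{x}', M)$ are finite direct sums of shifted copies of $M$ and $A$ is Noetherian, $H_1^A(\bar{x}', M)$ is finitely generated in $\grmod(A)$; since $x_u \in \mathcal{N}$, the graded Nakayama lemma (Lemma~\ref{lemma graded nakayama}) applied to the proper graded ideal $(x_u)$ forces $H_1^A(\bar{x}', M) = 0$. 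The inductive hypothesis then makes $\bar{x}'$ an $M$-sequence. Finally, the piece at $p=1$ reads
$$0 = H_1^A(\bar{x}, M) \rightarrow H_0^A(\bar{x}', M)(-\deg x_u) \xrightarrow{\pm x_u} H_0^A(\bar{x}', M),$$
and since $H_0^A(\bar{x}', M) = M/(x_1, \ldots, x_{u-1})M$, this shows that $x_u$ is a non-zero-divisor on $M/(x_1, \ldots, x_{u-1})M$, completing the verification that $\bar{x}$ is an $M$-sequence.

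The main obstacle I anticipate is the careful bookkeeping of grading shifts in the short exact sequence of graded Koszul complexes and in the resulting long exact sequence, in order to identify the connecting homomorphism with $\pm x_u$ as a degree-zero morphism from $H_p^A(\bar{x}', M)(-\deg x_u)$ to $H_p^A(\bar{x}', M)$. A secondary but essential verification is that graded Nakayama applies in the precise form used here: one needs $H_1^A(\bar{x}', M)$ finitely generated over $A$ (immediate from the Koszul construction and Noetherianity of $A$) and $(x_u)$ a proper graded ideal of the *local ring $(A, \mathcal{N})$, which holds since $x_u \in \mathcal{N}$ by hypothesis.
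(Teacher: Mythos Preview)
Your argument is correct and is precisely the approach the paper intends: the paper does not spell out the proof but refers to Serre's IV.A.2, Propositions~2 and~3, replacing ordinary Nakayama by the graded Nakayama lemma (Lemma~\ref{lemma graded nakayama}); your induction via the long exact sequence associated to $0 \to A \to \mathbf{K}(x_u,A) \to A(-\deg x_u)[1] \to 0$, with connecting map $\pm x_u$, is exactly that argument carried out in $\grmod(A)$.
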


The proofs of the above Propositions are exactly analogous as that of  IV.A.2, Propositions 2, 3 in \cite{se}, replacing any use of Nakayama's lemma with the graded Nakayama's lemma (Lemma \ref{lemma graded nakayama}); similarly, IV.A.2, Corollary 2 yields, in the graded case,
\begin{corollary}  If $(A, \mathcal{N})$ is a *local Noetherian ring, $M \in \grmod(A)$, and $\bar{x} = x_1, \ldots, x_u$ are homogeneous elements of $\mathcal{N}$ that form an $A$-sequence for $A$, then there is a natural isomorphism of graded $A$-modules
$$\psi: H_i^A(\bar{x}, M) \rightarrow Tor_i^A(A/(\bar{x}), M).$$
\end{corollary}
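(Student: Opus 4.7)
The plan is to exhibit the augmented Koszul complex $\mathbf{K}^A(\bar{x},A) \to A/(\bar{x}) \to 0$ as a free graded resolution of $A/(\bar{x})$ in $\grmod(A)$, and then compute $\Tor_i^A(A/(\bar{x}),M)$ from this resolution by tensoring with $M$. The statement $\psi$ then drops out because $\mathbf{K}^A(\bar{x},A) \otimes_A M = \mathbf{K}^A(\bar{x},M)$ by the very definition of the Koszul complex of $M$ relative to $\bar{x}$.

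First I would collect the pieces already in place. From the discussion setting up $\mathbf{K}^A(\bar{x},A)$, the $p$th term is graded isomorphic to $\Lambda^p\bigl(A(-\deg x_1) \oplus \cdots \oplus A(-\deg x_u)\bigr)$, hence is a finitely generated free graded $A$-module (in particular, projective in $\grmod(A)$), and the Koszul differentials are graded $A$-linear of degree zero. The degree-zero identification $H_0^A(\bar{x},A) \cong A/(\bar{x})$ realized by the quotient map $K_0 = A \twoheadrightarrow A/(\bar{x})$ is a graded isomorphism of degree zero. By the first Proposition of this subsection, applied with $M = A$, the hypothesis that $\bar{x}$ is an $A$-sequence forces $\mathbf{K}^A(\bar{x},A)$ to be acyclic in positive complex degrees. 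Assembling these facts, the augmented complex
$$\cdots \to K_2^A(\bar{x},A) \to K_1^A(\bar{x},A) \to K_0^A(\bar{x},A) \to A/(\bar{x}) \to 0$$
is a free graded resolution of $A/(\bar{x})$ in $\grmod(A)$.

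Next, I would invoke the graded version of the standard homological algebra fact that $\Tor_i^A(N,-)$ (a graded functor on $\grmod(A)$) may be computed from any projective resolution of $N$ in $\grmod(A)$: applying $-\otimes_A M$ to the resolution and taking the $i$th homology yields a graded $A$-module naturally graded-isomorphic to $\Tor_i^A(A/(\bar{x}),M)$. Since $\mathbf{K}^A(\bar{x},A) \otimes_A M = \mathbf{K}^A(\bar{x},M)$ by definition, we obtain
$$\Tor_i^A(A/(\bar{x}),M) \;\cong\; H_i\!\bigl(\mathbf{K}^A(\bar{x},A) \otimes_A M\bigr) \;=\; H_i\!\bigl(\mathbf{K}^A(\bar{x},M)\bigr) \;=\; H_i^A(\bar{x},M),$$
and the composite is the desired natural graded isomorphism $\psi$. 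Naturality in $M$ follows from the functoriality of $-\otimes_A M$ on the resolution.

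The only real obstacle is bookkeeping rather than substance: one needs to be confident that graded projective resolutions in $\grmod(A)$ compute $\Tor$ in $\grmod(A)$ and that everything is compatible with the internal grading. This is the content of the paragraph preceding the subsection, where the author asserts the development of $\Tor$ in $\grmod(A)$ parallels the ungraded case via graded tensor products and graded projective resolutions. Note that the *local hypothesis is not used directly in the argument above; it enters only indirectly, via the earlier Proposition used to guarantee acyclicity of $\mathbf{K}^A(\bar{x},A)$ from the $A$-sequence assumption (and for $M=A$ the stronger converse direction is in fact not needed, only the forward implication ``$A$-sequence $\Rightarrow$ Koszul complex acyclic,'' which holds without *locality).
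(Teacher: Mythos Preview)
Your proof is correct and follows essentially the same approach as the paper, which simply defers to Serre \cite{se}, IV.A.2, Corollary 2, adapted to the graded setting: the Koszul complex $\mathbf{K}^A(\bar{x},A)$ is a free graded resolution of $A/(\bar{x})$ because $\bar{x}$ is an $A$-sequence, and tensoring with $M$ gives $\mathbf{K}^A(\bar{x},M)$, whose homology is therefore $\Tor_i^A(A/(\bar{x}),M)$. Your observation that the *local hypothesis is not actually needed here (only the forward implication ``$A$-sequence $\Rightarrow$ acyclic'') is correct and worth noting.
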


Finally, IV.A.2, Proposition 4, has the analogous
\begin{proposition}  Suppose that $(A, \mathcal{N})$ is a *local graded Noetherian ring and $M \in \grmod(A)$.  If $x_1, \ldots, x_u$ are homogeneous elements of $\mathcal{N}$, then $(\bar{x}) + Ann_A(M) \subseteq Ann_A(H_i^A(\bar{x}, M)).$
\end{proposition}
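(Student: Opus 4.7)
My plan is to establish the containment by showing separately that $(\bar{x})$ and $Ann_A(M)$ each annihilate every $H_i^A(\bar{x},M)$; since the annihilator is an ideal, their sum then also annihilates.

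For the containment $Ann_A(M) \subseteq Ann_A(H_i^A(\bar{x},M))$, I would invoke the explicit description of the Koszul modules recalled earlier in the section: as a graded $A$-module, $K_p^A(\bar{x}, M)$ is a direct sum of ${u \choose p}$ shifted copies of $M$, and the Koszul differentials are $A$-linear. Hence any $a \in Ann_A(M)$ acts as zero on each $K_p^A(\bar{x}, M)$, so it kills all cycles and boundaries, and therefore kills $H_i^A(\bar{x},M)$ for every $i$. This step requires no *local assumption and essentially no work.

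For the containment $(\bar{x}) \subseteq Ann_A(H_i^A(\bar{x},M))$, I would mimic Serre's standard null-homotopy argument, keeping track of the internal grading. Fix $j$ and let $d_j = \deg(x_j)$. On the two-term complex $\mathbf{K}(x_j, A)$, the map ``multiplication by $x_j$'' (a graded chain endomorphism of internal degree $d_j$) is null-homotopic: define $s_j \colon K_0(x_j,A) = A \to K_1(x_j, A) = A(-d_j)$ by sending $1 \mapsto e_{x_j}$ (and extend $A$-linearly), which is graded of internal degree $d_j$. A direct check using $\partial(e_{x_j}) = x_j$ shows $\partial s_j + s_j \partial = x_j\cdot \mathrm{id}$ on $\mathbf{K}(x_j, A)$. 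Now extend $s_j$ to the full Koszul complex by $\mathrm{id}\otimes\cdots\otimes s_j \otimes\cdots\otimes \mathrm{id}$ (with the usual Koszul signs appearing from the tensor product differential formula recalled in the text); the resulting map is a chain homotopy from multiplication by $x_j$ to $0$ on $\mathbf{K}^A(\bar{x},A)$, and tensoring with $M$ over $A$ produces such a chain homotopy on $\mathbf{K}^A(\bar{x},M)$. Therefore $x_j$ acts as $0$ on $H_i^A(\bar{x},M)$ for every $i$, and since this holds for each $j$, the ideal $(\bar{x})$ annihilates each homology module.

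Combining the two containments yields $(\bar{x}) + Ann_A(M) \subseteq Ann_A(H_i^A(\bar{x},M))$ as required. The only potentially delicate point is a careful verification of the signs and internal degree shifts in the extended homotopy on the tensor product complex, but this is entirely formal: since both the Koszul modules and their differentials are described in the text as having exactly the same form as in the ungraded theory (Serre IV.A.2), the bookkeeping is identical to Serre's, with only the internal grading shifts $A(-d_j)$ added to the notation. Notably, the *local hypothesis on $(A,\mathcal{N})$ is not used here; it is inherited from the setup of the preceding propositions.
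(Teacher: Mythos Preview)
Your proof is correct and is precisely the standard argument from Serre, IV.A.2, Proposition 4, adapted to the graded setting; the paper itself gives no proof beyond citing that this is the graded analogue of Serre's result. Your observation that the *local hypothesis is not actually needed is also accurate.
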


As a corollary, 
\begin{proposition}  \label{corollary homology fdvs} Suppose that $(A, \mathcal{N})$ is a *local graded Noetherian ring , and $M \in  \grmod(A)$.   Let $\I$ be a GIOD for $M$, generated by the homogeneous sequence $\bar{x} = x_1, \ldots , x_u \in \mathcal{N}$. Then, $H_j^A(\bar{x}, M) $ has finite *length over $A$ for every $j \geq 0$.
\end{proposition}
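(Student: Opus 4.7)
The plan is to chain together three previously established facts: the annihilator containment for Koszul homology just proved in the preceding proposition, the characterization of a GIOD in terms of $*V$, and the equivalence of finite *length with $*V$ being a finite set of *maximal ideals (from Section 3). No heavy computation is required; the work is entirely bookkeeping with $*V$.

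First I would check that each $H_j^A(\bar{x}, M)$ is in $\grmod(A)$. The explicit description of the Koszul module $K_p^A(\bar{x}, M)$ as a finite direct sum of shifted copies of $M$ shows that every $K_p^A(\bar{x}, M)$ is a finitely generated graded $A$-module, and since $A$ is Noetherian, the graded subquotient $H_j^A(\bar{x}, M)$ is also in $\grmod(A)$.

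Next I would apply the preceding proposition to obtain the inclusion
$$(\bar{x}) + Ann_A(M) \; \subseteq \; Ann_A(H_j^A(\bar{x}, M)),$$
which, upon taking $*V$ and reversing inclusions, yields $*V(H_j^A(\bar{x}, M)) \subseteq *V(\I + Ann_A(M))$. The hypothesis that $\I = (\bar{x})$ is a GIOD for $M$ means $*\ell_A(M/\I M) < \infty$, equivalently (by the lemma from Section 3) that $*V(M/\I M) = *V(\I + Ann_A(M))$ consists of *maximal ideals; since $A$ is *local with unique *maximal ideal $\mathcal{N}$, this set is contained in $\{\mathcal{N}\}$. Therefore $*V(H_j^A(\bar{x}, M)) \subseteq \{\mathcal{N}\}$ is a finite set of *maximal ideals, and the lemma in Section 3 equating finite *length with this condition gives $*\ell_A(H_j^A(\bar{x}, M)) < \infty$.

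There is essentially no obstacle here beyond lining up the earlier references correctly; the only mild subtlety is handling the degenerate cases where $H_j^A(\bar{x}, M) = 0$ or $M = \I M$, which are harmless since the zero module has *length zero and $*V(0) = \emptyset$ is vacuously a finite collection of *maximal ideals.
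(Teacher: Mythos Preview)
Your proof is correct and follows essentially the same route as the paper: both use the annihilator containment $\I + Ann_A(M) \subseteq Ann_A(H_j^A(\bar{x}, M))$ from the preceding proposition, together with the GIOD hypothesis forcing $*V(\I + Ann_A(M)) = \{\mathcal{N}\}$, to conclude finite *length via the characterization in Section~3. You add a bit more detail (verifying $H_j^A(\bar{x}, M) \in \grmod(A)$ and treating the degenerate case) than the paper's terse version, but the argument is the same.
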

\begin{proof}  Since $\I + Ann_A(M) \subseteq Ann_A(H_j^A(\bar{x}, M))$, and $\{ \mathcal{N}\} = *V(\I + Ann_A(M))$, if \newline $H_j^A(\bar{x}, M) \neq 0, \{ \mathcal{N} \} = *V(Ann_A(H_j^A(\bar{x}, M))).$

\end{proof}

Thus, in the *local case the *Euler characteristic of the homology of the graded Koszul complex is well defined:

\begin{definition} Suppose that $(A, \mathcal{N})$ is a *local, Noetherian graded ring and $M \in \grmod(A)$.  
Let $\I$ be a GIOD for $M \in \grmod(A)$  generated by a homogeneous sequence $\bar{x}=x_1, \ldots, x_u$. We define the *\textbf{Koszul multiplicity} $*\chi^A(\bar{x},M)$ to be the *Euler characteristic of the homology of the graded Koszul complex:   $$*\chi^A(\bar{x},M) = \sum_{i=1}^u(-1)^i *\ell_A(H_i^A(\bar{x},M)).$$
\end{definition}

\subsection{Equality of *Samuel and *Koszul multiplicities}
As in the ungraded case \cite{se}, IV.A.3, the *Koszul multiplicity is equal to a certain *Samuel multiplicity.  This section concludes our account of the theory of multiplicities adapted to the $\mathbb{Z}$-graded category. 

Let $(A, \mathcal{N})$ be  a *local, Noetherian graded ring, $M \in \grmod{A}$ and $\bar{x} =x_1, \ldots, x_u$ a sequence of homogeneous elements contained in $\mathcal{N}$. If $\I$ is the graded ideal of $A$ generated by $\bar{x}$, suppose also that $\I$ is a GIOD for $M$.  

One then filters the graded Koszul complex, yielding  graded complexes $\mathcal{F}^i\mathbf{K}$ for every $i$, with   $\mathcal{F}^iK_p \doteq \I^{i-p}K_p$ for every $p$ (we've dropped the arguments $\bar{x}, M$ for expediency).  Notice we have three indices now:  the filtration index, the complex index and the internal gradings of the various $A$-modules involved.  We are suppressing the internal grading. This filtration defines the  associated graded complex  $gr(\mathbf{K}) \doteq \oplus_i \mathcal{F}^i\mathbf{K}/ \mathcal{F}^{i+1} \mathbf{K}$.

If $gr(A)$ is the associated bigraded ring to the $\I$-adic filtration, then denote the images of $x_1, \ldots, x_u$ in $gr(A)_{1,*}$ by $\xi_1, \ldots, \xi_u$.  Let $gr(M)$ be the bigraded $gr(A)$-module associated with the $\I$-adic filtration of $M$. Then, there is an isomorphism of graded objects $gr(\mathbf{K}) \cong \mathbf{K}(\bar{\xi}, gr(M))$.  Moreover, one argues that the homology modules $H_p(\bar{\xi},gr(M))$ have finite *length over $gr(A)$,  for all $p$, since $A/(\I + Ann_A(M))$ is *Artinian.  This in turn, enables one to argue that there exists an $m \geq u$ such that the graded homology groups of the complex $\mathcal{F}^i\mathbf{K}/ \mathcal{F}^{i+1} \mathbf{K}$ all vanish for $i >m$, and so one sees that the graded homology groups the complex $\mathcal{F}^i\mathbf{K}$ all vanish if $i >m$.

Continuing as in \cite{se}, IV.A.3, (which is really a spectral sequence argument), this means there is an $m$ such that if $i >m$, then $H_p(\mathbf{K}) \cong H_p(\mathbf{K}/\mathcal{F}^i \mathbf{K})$ for $i >m$ and for all $p$.

Using the fact that *Euler characteristics don't change when passing to homology, $*\chi(\bar{x},M) = \sum_p (-1)^p *\ell(H_p(\mathbf{K}/\mathcal{F}^i(\mathbf{K})) = *\chi(\mathbf{K}/\mathcal{F}^i\mathbf{K})$, for  $i >m$.  As noted in the previous section,  $(\mathbf{K}/\mathcal{F}^i\mathbf{K})_p$ is isomorphic as a graded $A$-module to a direct sum of ${u \choose p}$ copies of $M/\I^{i-p}M$, shifted appropriately,  and since the *length of a shifted $A$-module $M(d)$ is the same  as the *length of $M$, the remainder of the proof is argued exactly as in \cite{se}, IV.A.3, with length replaced by *length, $p$ (a Samuel polynomial)  replaced by $*p$ and $e$ replaced by $*e$.

Thus, we have
\begin{theorem} \label{theorem koszul equals samuel}
Let $(A, \mathcal{N})$ be a *local Noetherian ring.  Let $x_1, \ldots, x_u \in \mathcal{N}$ be homogeneous elements generating a graded ideal of definition $\I$ for $M \in \grmod(A)$. Then, $$*\chi^A(\bar{x},M) = *e(M,\I,u),$$  so $*\chi^A(\bar{x}, M)$ is a strictly positive integer if $\sdim_A(M) = u$, and $*\chi^A(\bar{x}, M) = 0$ if $u > \sdim_A(M)$.
\end{theorem}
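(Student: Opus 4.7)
The plan is to follow the spectral-sequence style argument from \cite{se}, IV.A.3, adapted to the graded setting, as the preamble in the excerpt already hints. Write $\mathbf{K} = \mathbf{K}^A(\bar{x}, M)$. I filter $\mathbf{K}$ by the graded subcomplexes $\mathcal{F}^i\mathbf{K}$ defined by $(\mathcal{F}^i\mathbf{K})_p = \I^{i-p}K_p$, obtaining an associated bigraded complex that is naturally identified with $\mathbf{K}(\bar{\xi}, gr(M))$ over the associated bigraded ring $gr(A) = \oplus_n \I^n/\I^{n+1}$, where $\xi_i$ is the class of $x_i$ in $gr(A)_{1,*}$. Because $\I$ is a GIOD for $M$, the ring $A/(\I + \mathrm{Ann}_A(M))$ is *Artinian, and by the graded analogue of IV.A.2 Prop.~4 (stated above the Koszul multiplicity definition) the ideal $(\bar{\xi}) + \mathrm{Ann}_{gr(A)}(gr(M))$ annihilates each $H_p(\bar{\xi}, gr(M))$. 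Since each Koszul module is finitely generated and *length behaves well over *Artinian quotients, the bigraded modules $H_p(\bar{\xi}, gr(M))$ have finite total *length, so there is an integer $m \ge u$ such that the bidegree $(i,\cdot)$-piece of every $H_p(\bar{\xi}, gr(M))$ vanishes for $i > m$, i.e. $H_p(\mathcal{F}^i\mathbf{K}/\mathcal{F}^{i+1}\mathbf{K}) = 0$ for all $p$ and all $i > m$.

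Chasing the long exact homology sequences of $0 \to \mathcal{F}^{i+1}\mathbf{K} \to \mathcal{F}^i\mathbf{K} \to \mathcal{F}^i\mathbf{K}/\mathcal{F}^{i+1}\mathbf{K} \to 0$ yields $H_p(\mathcal{F}^{i+1}\mathbf{K}) \cong H_p(\mathcal{F}^i\mathbf{K})$ for $i > m$ and all $p$; combined with the graded Krull intersection / Artin--Rees principle applied to $\bigcap_i \I^{i-p}K_p$ in the *local Noetherian setting, this gives $H_p(\mathcal{F}^i\mathbf{K}) = 0$ for $i > m$. Then the short exact sequence $0 \to \mathcal{F}^i\mathbf{K} \to \mathbf{K} \to \mathbf{K}/\mathcal{F}^i\mathbf{K} \to 0$ produces $H_p(\mathbf{K}) \cong H_p(\mathbf{K}/\mathcal{F}^i\mathbf{K})$ for every $p$ and every $i > m$. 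This is the spectral sequence degeneration step, and I expect it to be the main obstacle, since it requires careful bookkeeping of the bigrading together with a graded Krull intersection statement that needs to be justified in the *local graded category.

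With the comparison in hand, Theorem \ref{theorem Euler of homology equals Euler of complex} and Lemma \ref{lemma euler splits over ses} give, for any $i > m$,
$$*\chi^A(\bar{x}, M) = *\chi(\mathbf{H(K)}) = *\chi(\mathbf{K}/\mathcal{F}^i\mathbf{K}) = \sum_{p=0}^{u}(-1)^p\, *\ell_A\bigl(K_p/\mathcal{F}^iK_p\bigr).$$
As noted in the section on the Koszul complex, $K_p/\mathcal{F}^iK_p$ is graded-isomorphic to $\binom{u}{p}$ suitably suspended copies of $M/\I^{i-p}M$, and *length is invariant under suspension, so the sum collapses to $\sum_{p=0}^{u}(-1)^p\binom{u}{p}\,*\ell_A(M/\I^{i-p}M)$. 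For $i$ large this is exactly $\Delta^{u}\bigl(*p(M,\I,n)\bigr)$ evaluated at $i-u$, which by definition of *Samuel multiplicity equals $*e(M,\I,u)$. This establishes $*\chi^A(\bar{x}, M) = *e(M,\I,u)$.

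The trailing assertions now follow from the graded fundamental dimension theorem (Theorem \ref{theorem fun thm graded dimension theory}): if $u = \sdim_A(M) = *d(M)$, then $u$ equals the degree of $*p(M,\I,n)$ so $*e(M,\I,u)$ is its (strictly positive) leading coefficient times $u!$, and if $u > \sdim_A(M) = *d(M)$ then $\Delta^u$ annihilates $*p(M,\I,n)$, so $*e(M,\I,u) = 0$. The case $u < \sdim_A(M)$ cannot occur since $\bar{x}$ generating a GIOD for $M$ forces $\sdim_A(M) = *s(M) \le u$.
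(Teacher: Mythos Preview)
Your proposal is correct and follows essentially the same route as the paper's own argument (which in turn follows \cite{se}, IV.A.3): filter $\mathbf{K}$ by $\mathcal{F}^iK_p = \I^{i-p}K_p$, identify $gr(\mathbf{K})$ with $\mathbf{K}(\bar{\xi}, gr(M))$, use finite *length of its homology to find $m$ with $H_p(\mathcal{F}^i\mathbf{K}) = 0$ for $i>m$, pass to $H_p(\mathbf{K}) \cong H_p(\mathbf{K}/\mathcal{F}^i\mathbf{K})$, and then compute $*\chi$ on the truncated complex as $\sum_p(-1)^p\binom{u}{p}*\ell_A(M/\I^{i-p}M) = \Delta^u(*p(M,\I,n))$. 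Your explicit invocation of Artin--Rees/Krull intersection for the vanishing of $H_p(\mathcal{F}^i\mathbf{K})$ and the $\Delta^u$ bookkeeping at the end are more detailed than the paper, which simply defers those steps to \cite{se}, but the substance is identical.
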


\section{Multiplicities and Degree for Positively Graded Rings}
\label{section Multiplicities and Degree for Positively Graded Rings}
In this section, we specialize to the case of a positively graded Noetherian ring $R$ with $R_0$ a field $k$; all graded modules are in $\grmod(R).$  Set $\m = R_+$ and note that $(R, \m)$ is then a *local graded Noetherian ring.  We do not want to make the assumption that $R$ is generated by elements in degree 1. 

In this chapter we may use the * notation even though we could just as well omit the * (e.g. If $M \in \grmod(R)$, then $*\dim_R(M)=\dim_R(M)$.) This is done to emphasize the fact that all computations may be done in the graded category using the theory developed in the previous two chapters (which is often simpler than the ungraded theory.)  

We introduce the \textit{degree} of a graded module, show how it relates to *multiplicity (Theorem \ref{theorem main theorem}), and give a sum decomposition of degree by a certain set of minimal primes (Theorem \ref{theorem sum formula for degree and multiplicity}.)  
 
Since $R_0=k$,  $\ell_k(M_i) = \vdim_k(M_i)$ for every $i$, so the Poincar\'{e} series for $M$ is equal to 
$$P_M(t) = \sum_{i  \in \mathbb{Z}} \vdim_k(M_i) t^i.$$ Furthermore, this Laurent series has a pole at $t=1$ using the Hilbert-Serre theorem, and the order of the pole $d_1(M)$ at $t=1$ is, by Smoke's dimension theorem, is exactly $\sdim_R(M)$.

This leads to the definition of $\deg_R(M)$:

\begin{definition}  If $R$ is a positively graded Noetherian ring with $R_0=k$ a field, $M\in \grmod(R), \newline M \neq 0$ and $D(M) = \sdim_R(M)$, then
$$\deg_R(M) \doteq \lim_{t \rightarrow 1} (1-t)^{D(M)}P_M(t)$$ is a well-defined, strictly positive, rational number.  For convenience, define $\deg_R(0) = 0.$
\end{definition}

Often we delete the subscript $R$ and just write $\deg(M)$.  We use the (somewhat ambiguous) name of ``degree" for this rational number in deference to the nomenclature already used in \cite{Bensonea}.  For equivariant cohomology, this ``degree" was first studied by Maiorana \cite{ma}.

\section{Multiplicities and Euler-Poincar\'{e} series}

If $X \in \grmod(R)$ has finite *length as an $R$-module, since each $R_i$ is finite-dimensional as a vector space over $k$, we may use Lemmas \ref{lemma pos graded length equals *length} and \ref{lemma *length of components} to conclude that  $\ell_R(X) = *\ell_R(X) = \vdim_k(X)$, where $\vdim_k(X)$ is the total dimension $\sum_j \vdim_k(X_j)$ of the graded $k$-vector space $X$.  We may then prove:

\begin{lemma} Suppose $R$ is a positively graded Noetherian ring with $R_0 = k$, a field,  and $X \in \grmod(R)$ is such that $*\ell_R(X) < \infty$.  If $B$ is a graded subring of $R$, Noetherian or not, with $B_0 = k = R_0$, then $X \in \grmod(B)$,  $*\ell_B(X) < \infty$ and  $*\ell_B(X) = *\ell_R(X) =  \ell_R(X) = \vdim_k(X) < \infty$.
\end{lemma}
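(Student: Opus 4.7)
The plan is to apply Lemma \ref{lemma *length of components}(c) twice, once with $S = R$ and once with $S = B$, so that both $*\ell_R(X)$ and $*\ell_B(X)$ are computed as $\vdim_k(X)$; the identification $*\ell_R(X) = \ell_R(X)$ will then follow from Lemma \ref{lemma pos graded length equals *length}.

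First I would verify the hypotheses of Lemma \ref{lemma *length of components}(c) for $R$. The ring $R_0 = k$ is Artinian; since $R$ is Noetherian with $R_0$ Noetherian, the standard structure theorem for graded Noetherian rings recalled in Section~\ref{Introductory Results in the Graded Context} makes $R$ a finitely generated $k$-algebra by homogeneous elements, so each $R_i$ is finite-dimensional over $k$ and in particular finitely generated as an $R_0$-module. The assumption $*\ell_R(X) < \infty$ together with Lemma \ref{lemma *length of components}(a) supplies an integer $J$ with $X_j = 0$ for $j > J$, while $X$ finitely generated over the positively graded $R$ forces $X_j = 0$ for $j \ll 0$ as well. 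Part (c) then yields $*\ell_R(X) = \sum_j \ell_k(X_j) = \vdim_k(X) < \infty$, and Lemma \ref{lemma pos graded length equals *length} adds $\ell_R(X) = *\ell_R(X)$.

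Next I would show $X \in \grmod(B)$: since $\vdim_k(X) < \infty$ and $k = B_0 \subseteq B$, any finite $k$-basis of $X$ generates $X$ as a $B$-module. To invoke Lemma \ref{lemma *length of components}(c) with $S = B$, note that $B$ is positively graded (because $B_i \subseteq R_i$), that $B_0 = k$ is Artinian, and that the vanishing of $X_j$ outside a bounded range of $j$ is inherited from the previous step. The one delicate condition, where the lack of Noetherianity of $B$ might bite, is that each $B_i$ be finitely generated over $B_0$; this is the main obstacle, and I would resolve it by the inclusion $B_i \subseteq R_i$ with $R_i$ finite-dimensional over $k$, so $B_i$ is a $k$-subspace of a finite-dimensional $k$-space. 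Part (c) then gives $*\ell_B(X) = \ell_k(X) = \vdim_k(X)$, matching the value computed over $R$ and completing the chain of equalities.
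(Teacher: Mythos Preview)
Your proposal is correct and follows essentially the same route as the paper: apply Lemma~\ref{lemma *length of components} first with $S=R$ to get $*\ell_R(X)=\vdim_k(X)$ and the bounded support of $X$, observe that $X$ is then finitely generated over $B$ since $k\subseteq B$, and apply Lemma~\ref{lemma *length of components} again with $S=B$, using $B_i\subseteq R_i$ to ensure each $B_i$ is finite-dimensional over $k$. The paper's proof is terser but uses exactly the same ingredients in the same order.
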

\begin{proof}  For, using Lemma \ref{lemma *length of components}  applied to $R$, each $X_i$ is finite-dimensional over $k$,  and there are integers $t_0$ and $J$ such that $t_0 \leq J$ with $X= \oplus_{j=t_0}^J X_j$.  Also, $*\ell_R(X) = \vdim_k(X)$.  However, since $k \subseteq B \subseteq R$, $X$ is a finitely generated  $B$-module.   Whether $B$ is Noetherian or not, since $B_j \subseteq R_j$ for every $j$, and $R_j$ is finite-dimensional over $k$, so is $B_j$.  Thus, using Lemma \ref{lemma *length of components}  applied to $B$, $*\ell_B(X) = \vdim_k(X)$ as well. \end{proof}

If $M \in \grmod(R)$, then $\m$ is a GIOD for $M$, and we may then calculate a *Samuel polynomial $*p_R(M, \m, n)$ for $M$; Theorem  \ref{theorem star e equals e, pos graded} says that this is the ordinary Samuel polynomial $p_R(M, \m, n)$; Corollary \ref{corollary fun thm dimension theory pos graded} says that the degree of this polynomial is 
$$D(M) \doteq *d(M) = *s(M) = s_1(M) = d_1(M) = \sdim_R(M) = \dim_R(M).$$

Now, suppose that $\bar{x} = x_1, \ldots, x_{D(M)}$ is a GSOP for the $R$-module $M$.  If $\I$ is the graded ideal in $R$ generated by $\bar{x}$, $\I$ is a GIOD for $M$.  We can change rings to $B \doteq k\langle x_1, \ldots, x_{D(M)} \rangle$, note that this is a graded polynomial ring over $k$ in the indicated variables (Proposition \ref{proposition algebraic independence}).  The ideal $\hat{\I}$ generated by $\bar{x}$ in $B$ is also a GIOD in $B$ since clearly $\hat{\I}^n M = \I^nM$ for every $n$.  Therefore Theorem \ref{theorem star e equals e, pos graded} and the previous lemma guarantee that, for every $n$, the polynomials below are all equal, as indicated:
$$*p_R(M, \I, n) = p_R(M, \I, n) = p_B(M, \hat{\I}, n) = *p_B(M, \hat{\I}, n);$$ in particular, they all have the same degree $D(M)$, and the following positive integers are also all equal:
$$*e_R(M, \I, D(M)) = e_R(M, \I, D(M)) = e_B(M, \hat{\I}, D(M)) = *e_B(M, \hat{\I}, D(M)).$$

\subsection{Euler-Poincar\'{e} series}

The following lemma is found in Avramov and Buchweitz \cite{avbu}; \cite{sm} contains a similar result.

\begin{lemma} (Lemma 7  of \cite{avbu}) \label{corollary Euler-Poincare series props}If $M,N \in \grmod(R)$, then
\begin{enumerate}
\item[a.] For each $i$, the graded $R$-module $Tor_i^R(M,N)$ has finite dimensional (over $k = R_0$) homogenous components $Tor_i^R(M,N)_j$, for every $j$; also, for every $i$,  $Tor_i^R(M,N)_j = 0$ for $j <<0$. Thus one may form the Laurent series  $$P_{Tor_i^R(M,N)}(t) \doteq \sum_{j \in \mathbb{Z}} \vdim_k(Tor_i^R(M,N))_jt^j.$$
\item[b.] Furthermore,the alternating sum
$$\chi_R(M,N)(t) \doteq \sum_{i \geq 0} (-1)^i P_{Tor_i^R(M,N)}(t),$$ which is by definition the Euler-Poincar\'{e} series of $M,N$, is a well-defined Laurent series with integer coefficients and
$$P_R(t)\chi_R(M,N)(t) = P_M(t)P_N(t).$$
\end{enumerate}
\end{lemma}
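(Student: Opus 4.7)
The plan is to compute both sides using a minimal graded free resolution of $M$, reducing the Laurent-series identity to a finite-dimensional Euler characteristic identity carried out independently in each internal degree. Since $R$ is Noetherian with unique $*$maximal ideal $\m = R_+$ and residue graded field $R/\m = k$, graded Nakayama (Lemma \ref{lemma graded nakayama}) allows the construction of a minimal graded free resolution $F_\bullet \to M$ with each $F_i = \bigoplus_{\alpha} R(-d_{i,\alpha})$ finitely generated and $\partial(F_i) \subseteq \m F_{i-1}$. Let $m \geq 1$ be the smallest positive integer with $R_m \neq 0$, and set $d_i^{\min} := \min_{\alpha} d_{i,\alpha}$. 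Minimality forces $d_i^{\min} \geq d_{i-1}^{\min} + m$, so $d_i^{\min} \to \infty$. This single finiteness drives the whole argument: for each fixed $j$, only finitely many $i$ contribute to $(F_i)_j$ or to $(F_i \otimes_R N)_j$.

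For part a, observe that $F_i \otimes_R N \cong \bigoplus_{\alpha} N(-d_{i,\alpha})$, so $(F_i \otimes_R N)_j$ is a finite direct sum of the $k$-vector spaces $N_{j-d_{i,\alpha}}$. Each $N_{\ell}$ is finite-dimensional over $k$ (since $R_\ell$ is and $N$ is finitely generated over $R$) and vanishes for $\ell \ll 0$ (since $R$ is positively graded and $N$ is finitely generated). Hence $Tor_i^R(M,N)_j$, as a subquotient of $(F_i \otimes_R N)_j$, inherits these properties, and each $P_{Tor_i^R(M,N)}(t)$ is a well-defined Laurent series with integer coefficients.

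For part b, write $q_i(t) := \sum_{\alpha} t^{d_{i,\alpha}}$, so that $P_{F_i}(t) = q_i(t) P_R(t)$ and $P_{F_i \otimes_R N}(t) = q_i(t) P_N(t)$. Because $d_i^{\min} \to \infty$, the series $Q(t) := \sum_{i \geq 0} (-1)^i q_i(t)$ is a well-defined Laurent series with integer coefficients, and likewise for $P_R(t) Q(t)$ and $P_N(t) Q(t)$. Now fix $j$: the complex $(F_\bullet \otimes_R N)_j$ is then a bounded complex of finite-dimensional $k$-vector spaces whose homology is $Tor_\bullet^R(M,N)_j$, and the elementary Euler-characteristic identity for such complexes gives
$$\sum_{i} (-1)^i \vdim_k (F_i \otimes_R N)_j \;=\; \sum_i (-1)^i \vdim_k Tor_i^R(M,N)_j .$$
Weighting by $t^j$ and summing over $j$ yields $\chi_R(M,N)(t) = P_N(t) Q(t)$ as Laurent series, establishing that $\chi_R(M,N)(t)$ is well-defined. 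Applying the same Euler-characteristic argument to the augmented resolution $\cdots \to F_1 \to F_0 \to M \to 0$ (which is exact, and bounded in each internal degree $j$) gives $P_M(t) = P_R(t) Q(t)$. Multiplying the first identity through by $P_R(t)$ and substituting,
$$P_R(t) \chi_R(M,N)(t) \;=\; P_R(t) P_N(t) Q(t) \;=\; P_N(t) \bigl(P_R(t) Q(t)\bigr) \;=\; P_N(t) P_M(t).$$

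The principal obstacle is not conceptual but a matter of careful bookkeeping: one must verify that every alternating sum of Poincar\'{e} series appearing in the argument is an honest Laurent series and that each term-by-term manipulation is legitimate. The uniform control provided by $d_i^{\min} \to \infty$, which is the one consequence of minimality of the resolution that really matters, is exactly what makes all these finiteness checks work simultaneously and independently of the internal degree $j$.
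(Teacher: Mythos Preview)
The paper does not supply its own proof of this lemma; it simply cites Lemma~7 of Avramov--Buchweitz \cite{avbu} and moves on. Your argument is correct and is essentially the standard one: build a minimal graded free resolution, use minimality to force the generator degrees $d_i^{\min}$ to tend to infinity, and then observe that in each fixed internal degree $j$ every alternating sum collapses to a finite Euler characteristic of $k$-vector spaces. The identity $P_M(t) = P_R(t)Q(t)$ coming from exactness of the augmented resolution, combined with $\chi_R(M,N)(t) = P_N(t)Q(t)$ coming from the definition of $\Tor$, gives the product formula immediately. One cosmetic point: you should note the degenerate case $R = R_0 = k$ separately, since then $R_+ = 0$ and no positive $m$ exists, but there the resolution has length zero and everything is trivial. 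Otherwise the proof is complete and would serve as a self-contained replacement for the bare citation.
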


\label{section Multiplicities and the Euler-Poincare Series}
If a GSOP $\bar{x}$ is given for $M \in \grmod(R)$,  $B \doteq k \langle \bar{x} \rangle \subseteq R$  is then a graded polynomial ring over $k$ (Proposition \ref{proposition algebraic independence}), and  $M \in \grmod(B)$, using Lemma \ref{lemma finite generation gsop}.  Whether we consider $M \in \grmod(R)$, or $M \in \grmod(B)$, the Poincar\`{e} series of $M$ does not change.

Hilbert's Syzygy Theorem tells us that the graded Koszul complex $\mathbf{K}^{k\langle \bar{x} \rangle}(\bar{x}, k)$ is acyclic, thus is a free, finite graded resolution of $k$ as a graded $k\langle \bar{x} \rangle$-module.  In particular, we may tensor this resolution with $M$  and use it to compute
$Tor_i^{k\langle \bar{x} \rangle}(k,M) = Tor_i^{k\langle \bar{x} \rangle}(M,k),$ showing that
$$Tor_i^{k\langle \bar{x} \rangle}(M,k) = H^{k\langle \bar{x} \rangle}_i(\bar{x}, M).$$

\begin{lemma} \label{star chi equals chi} Let $\bar{x} = x_1, \ldots, x_{D(M)}$ be a GSOP for $M \in \grmod(R)$, and let $\I$ be the graded ideal in $R$ generated by $\bar{x}$.
For every $i$, $ P_{Tor_i^{k\langle \bar{x} \rangle}(M,k)}(t) \in \mathbb{Z}[t, t^{-1}],$ and therefore
$\chi_{k\langle \bar{x} \rangle}(M,k)(t) \in \mathbb{Z}[t,t^{-1}].$  Furthermore, 
$$\chi_{k\langle \bar{x} \rangle}(M,k)(t) = \sum_{j=0}^{D(M)} (-1)^jP_{H_j^{k\langle \bar{x} \rangle}(\bar{x},M)}(t),$$ 
and  evaluating this Laurent polynomial at $t=1$, we compute
$$\chi_{k\langle \bar{x} \rangle}(M,k)(1)  = *\chi^{k\langle \bar{x} \rangle}(\bar{x}, M) =  *e_R(\I, M, D(M))= *\chi^R(\bar{x}, M),$$  where $D(M) = \sdim_R(M).$
\end{lemma}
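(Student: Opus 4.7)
The plan is to work throughout over the subring $B = k\langle \bar x \rangle \subseteq R$, which by Proposition \ref{proposition algebraic independence} is a graded polynomial ring on the algebraically independent homogeneous generators $x_1,\ldots,x_{D(M)}$. Since $M$ is finitely generated over $B$ by Lemma \ref{lemma finite generation gsop}, every Poincar\'e-series and Tor computation on the $B$-side is meaningful, and I will transfer the end result back to $R$ at the end via the ring-change lemma stated just before this subsection. First I would observe that $\bar x$ is a $B$-regular sequence (each $B/(x_1,\ldots,x_i)B$ is again a polynomial domain in the remaining variables), so by the acyclicity proposition for Koszul complexes, $\mathbf{K}^B(\bar x,B)$ is a free graded resolution of $k = B/(\bar x)$ in $\grmod(B)$, and consequently $Tor_i^B(M,k) \cong H_i^B(\bar x,M)$ as graded $B$-modules.

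Next, each $H_i^B(\bar x, M)$ is annihilated by $(\bar x)$ by the annihilator proposition for Koszul homology, hence is a graded $B/(\bar x)=k$-module; it is also finitely generated over $B$, being a subquotient of a direct sum of shifts of $M$. Combined, these force $H_i^B(\bar x, M)$ to be a finite-dimensional graded $k$-vector space, vanishing for $i>D(M)$ by the length of the Koszul complex. This gives
$$P_{Tor_i^B(M,k)}(t) \;=\; P_{H_i^B(\bar x, M)}(t) \;\in\; \mathbb{Z}[t,t^{-1}],$$
and $\chi_B(M,k)(t) = \sum_{j=0}^{D(M)}(-1)^j P_{H_j^B(\bar x, M)}(t)$ is a finite alternating sum in $\mathbb{Z}[t,t^{-1}]$. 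Evaluating at $t=1$ and using the lemma just before this subsection (which identifies $\vdim_k(X)$ with $*\ell_B(X)$ for any $X\in\grmod(B)$ of finite *length) yields $P_{H_j^B(\bar x, M)}(1)=*\ell_B(H_j^B(\bar x, M))$, so $\chi_B(M,k)(1) = *\chi^B(\bar x, M)$.

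To finish I would chain three equalities: first, Theorem \ref{theorem koszul equals samuel} applied to the *local ring $(B,B_+)$ gives $*\chi^B(\bar x, M) = *e_B(M, \hat\I, D(M))$ with $\hat\I = (\bar x)B$, where the needed hypotheses (that $\hat\I$ is a GIOD for $M$ over $B$ and $\sdim_B(M)=D(M)$) follow from the identity $*p_R(M,\I,n) = *p_B(M,\hat\I,n)$ displayed just before the lemma statement combined with the fundamental *local dimension theorem. Second, the displayed chain preceding the lemma yields $*e_B(M,\hat\I,D(M)) = *e_R(M,\I,D(M))$. Third, for $*\chi^R(\bar x,M) = *\chi^B(\bar x,M)$ I would observe that the Koszul complexes $\mathbf{K}^R(\bar x,M)$ and $\mathbf{K}^B(\bar x,M)$ agree term-by-term as complexes of graded abelian groups (the differentials are built only from multiplication by the $x_i$, which happens inside $B$), so the homologies coincide as graded abelian groups, and the ring-change lemma at the start of this section gives $*\ell_R(H_j^R(\bar x,M)) = *\ell_B(H_j^B(\bar x,M))$ term-wise. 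I expect the main obstacle is not a computation but a bookkeeping one: verifying cleanly that $(B,B_+)$ is *local, that $\hat\I$ is a GIOD for $M$ over $B$, and that the degree of the $B$-side *Samuel polynomial really equals $D(M)=\sdim_R(M)$ so that Theorem \ref{theorem koszul equals samuel} can be invoked with $u = D(M)$.
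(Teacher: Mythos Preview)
Your proposal is correct and follows essentially the same route as the paper: pass to $B=k\langle \bar x\rangle$, identify $\Tor_i^B(M,k)$ with Koszul homology, observe these are finite-dimensional $k$-vector spaces so each Poincar\'e series is a Laurent polynomial, evaluate at $t=1$ to get $*\chi^B(\bar x,M)$, and then bridge to the $R$-side via the displayed equality $*e_B(M,\hat\I,D(M))=*e_R(M,\I,D(M))$ and Theorem~\ref{theorem koszul equals samuel}.

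The one noteworthy difference is in the last identification $*\chi^R(\bar x,M)=*\chi^B(\bar x,M)$. The paper obtains this indirectly by applying Theorem~\ref{theorem koszul equals samuel} a second time, now over the *local ring $(R,R_+)$, to get $*\chi^R(\bar x,M)=*e_R(M,\I,D(M))$. Your direct argument---that $\mathbf{K}^R(\bar x,M)$ and $\mathbf{K}^B(\bar x,M)$ coincide term-by-term as complexes of graded abelian groups because each $K_p$ is a direct sum of shifts of $M$ and the differentials are built from multiplication by the $x_i\in B$---is more elementary and avoids a second appeal to Theorem~\ref{theorem koszul equals samuel}. Both arguments are valid; yours has the small advantage of making transparent that the *Koszul multiplicity of $M$ with respect to $\bar x$ really depends only on $M$ as a $k\langle\bar x\rangle$-module, not on the ambient ring.
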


\begin{proof}   Lemma \ref{corollary Euler-Poincare series props} shows the first part of the statement, and since the resolution $\mathbf{K}^{k\langle \bar{x} \rangle}(\bar{x}, k)$ is in any case zero for complex degree larger than $D(M)$,
$Tor_i^{k\langle \bar{x} \rangle}(M,k)$ is also zero for $i > D(M)$, so
$$\chi_{k \langle \bar{x} \rangle}(M,k)(t) \doteq \sum_{j \geq 0} (-1)^j P_{Tor_j^{k \langle \bar{x} \rangle}(M,k)}(t) =  \sum_{j=0}^{D(M)} (-1)^jP_{H_j^{k\langle \bar{x} \rangle}(\bar{x},M)}(t),$$ being a finite sum of Laurent polynomials, is a Laurent polynomial.

Setting $B \doteq k\langle \bar{x} \rangle$ yields,
$$*\ell_B(H_i^B(\bar{x}, M)) = \ell_B(H_i^B(\bar{x},M)) = \vdim_k(H_i^B(\bar{x}, M)),$$ so
$$*\chi^B(\bar{x}, M) = \sum_{j=0}^{D(M)} (-1)^j \ell_B(H_j^B(\bar{x}, M))= \sum_{j=0}^{D(M)} (-1)^j \vdim_k(H_j^B(\bar{x}, M)) \doteq \chi_B(M,k)(1).$$

As noted at the beginning of this section,  if $\hat{\I}$ is the ideal generated by $\bar{x}$ in $B$, then \newline $*e_B(M, \hat{\I}, D(M)) = *e_R(M, \I, D(M))$ and Theorem \ref{theorem koszul equals samuel} says that $$*\chi^B(\bar{x}, M) = *e_B(M, \hat{\I}, D(M))  = *e_R(M, \I, D(M)) = *\chi^R(\bar{x}, M) .$$  

\end{proof}

\subsection{Degree of a Graded Module in $\grmod(R)$}

Given $M \in \grmod(R)$,  $\deg(M) >0$, if $M \neq 0$,  
we can read off the degree of a module directly from the Poincare series if we expand it as a Laurent series about $t=1$: $$P_R(t) = \frac{\deg(M)}{(1-t)^{D(M)}} + \text{"higher order terms"}.$$
 
\begin{lemma} Suppose that $0 \rightarrow N \rightarrow M \rightarrow P \rightarrow 0$ is an exact sequence in $\grmod(R)$. Then,
\begin{itemize}
\item $D(M) = \max \{D(N), D(P) \}. $
\item If $D(N) < D(M),$ then $\deg(M) = \deg(P).$
\item If $D(P) <D(M),$ then $\deg(M)=\deg(N).$
\item If $D(P)=D(N)=D(M)$, then $\deg(M)=\deg(N) + \deg(P).$
\item $\deg(M(d)) = \deg(M)$, for every integer $d$.
\end{itemize}
\end{lemma}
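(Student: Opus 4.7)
The plan is to reduce everything to the additivity of Poincaré series together with the Laurent expansion of $P_X(t)$ about $t=1$. Recall that the Hilbert--Serre theorem gives $P_X(t)$ as a rational function with denominator a product of $(1-t^{d_i})$ factors, and that by Smoke's theorem the order of its pole at $t=1$ equals $D(X)=\sdim_R(X)$. Consequently one has the Laurent expansion
\[
P_X(t)\;=\;\frac{\deg(X)}{(1-t)^{D(X)}}\;+\;\text{(terms with pole of order }<D(X)\text{ at }t=1\text{)},
\]
and $\deg(X)>0$ whenever $X\neq 0$.

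First I would establish $D(M)=\max\{D(N),D(P)\}$. By the earlier lemma on short exact sequences, $*V(M)=*V(N)\cup *V(P)$; the minimal primes of each support are graded, and $\sdim_R(-)$ is by definition the longest chain of graded primes above the annihilator, so taking the maximum over the two pieces gives $\sdim_R(M)=\max\{\sdim_R(N),\sdim_R(P)\}$. This settles the first bullet purely at the level of the dimension theory developed previously, independently of any cancellation issue in the Poincaré series.

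Next, the additivity $P_M(t)=P_N(t)+P_P(t)$ over short exact sequences in $\grmod(R)$ lets me compare the coefficient of $(1-t)^{-D(M)}$ in the Laurent expansions of both sides. If $D(N)<D(M)$, then $P_N(t)$ contributes nothing to this coefficient, so $\deg(M)=\deg(P)$; symmetrically if $D(P)<D(M)$. If $D(N)=D(P)=D(M)$, then matching the $(1-t)^{-D(M)}$ coefficients on both sides yields $\deg(M)=\deg(N)+\deg(P)$, which is consistent with positivity of $\deg$ and the first bullet (there is no cancellation since both summands are strictly positive rationals).

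Finally, for the suspension statement, the observation that $P_{M(d)}(t)=t^{-d}P_M(t)$ (already noted among the properties of Poincaré series) combines with $\mathrm{Ann}_A(M(d))=\mathrm{Ann}_A(M)$, hence $D(M(d))=D(M)$, to give
\[
\deg(M(d))\;=\;\lim_{t\to 1}(1-t)^{D(M)}\,t^{-d}P_M(t)\;=\;1^{-d}\cdot\deg(M)\;=\;\deg(M).
\]
The only point requiring any care is the first bullet in the case $D(N)=D(P)$: a naive argument from $P_M=P_N+P_P$ alone could in principle allow the leading Laurent coefficients to cancel, dropping the pole order. I avoid this entirely by deriving the first bullet from the support identity $*V(M)=*V(N)\cup *V(P)$, after which strict positivity of $\deg$ on nonzero modules retroactively guarantees that no such cancellation occurs in the Laurent expansion.
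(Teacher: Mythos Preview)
Your proof is correct. The paper actually states this lemma without proof, treating it as an immediate consequence of the properties of Poincar\'e series and dimension already assembled; your argument supplies precisely the routine verification one would expect, using the additivity $P_M=P_N+P_P$, the Laurent expansion about $t=1$, and the support identity $*V(M)=*V(N)\cup *V(P)$ to handle the first bullet without worrying about cancellation of leading terms.
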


This immediately yields, as in \cite{Bensonea}:

\begin{theorem}  \label{theorem sum formula for degree and multiplicity}
Let $M \in \grmod(R)$,  and $\mathcal{D}(M)$ be defined as in  Theorem \ref{theorem fun thm graded dimension theory}:  this is the set of prime ideals $\p$ in $R$, necessarily minimal primes for $M$ and graded, such that $\sdim_R(R/\p) = \sdim_R(M)$.  Then, $$\deg(M) = \sum_{\p \in \mathcal{D}(M)}*\ell_{R_{[\p]}}(M_{[\p]})\cdot \deg(R/\p).$$
\end{theorem}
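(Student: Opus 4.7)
The plan is to mimic the proof of Corollary~\ref{corollary *Samuel sum decomposition}, replacing the *Samuel multiplicity by the degree; the additivity properties of $\deg$ on short exact sequences (stated in the lemma immediately preceding the theorem), together with the invariance of the ``count of $R/\p$-subquotients" from Lemma~\ref{corollary the filtration graded} and Theorem~\ref{theorem *composition series exists for Mgrp when p is minimal}, will do the work.

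First, I would apply Lemma~\ref{corollary the filtration graded} to produce a graded filtration
\[
0 = M^0 \subset M^1 \subset \cdots \subset M^N = M
\]
with subquotients $M^i/M^{i-1} \cong (R/\p_i)(d_i)$ for graded primes $\p_i$ and integers $d_i$. Since each $\p_i$ lies in $\ast\!\mathrm{Supp}_R(M)$, the hypothesis $Ann_R(M) \subseteq \p_i$ gives $\sdim_R(R/\p_i) \leq \sdim_R(M) = D(M)$, and by Smoke's theorem (\ref{theorem Smoke dimension}) combined with Corollary~\ref{corollary fun thm dimension theory pos graded} this means that the Poincar\'e series $P_{R/\p_i}(t)$ has a pole at $t=1$ of order at most $D(M)$, with equality precisely when $\p_i$ lies in $\mathcal{D}(M)$ as defined in the statement.

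Next, I would iterate the additivity lemma just proved: walking up the filtration one subquotient at a time and using $\deg((R/\p_i)(d_i)) = \deg(R/\p_i)$ together with the four bulleted rules (only the summands of maximal dimension contribute, and they contribute additively), one obtains
\[
\deg(M) = \sum_{i\,:\,\p_i \in \mathcal{D}(M)} \deg(R/\p_i) \;=\; \sum_{\p \in \mathcal{D}(M)} n_\p(M^\bullet) \cdot \deg(R/\p),
\]
where $n_\p(M^\bullet)$ counts how many indices $i$ give $\p_i = \p$ (with $\p \in \mathcal{D}(M)$). Formally this is an easy induction on $N$: splitting off the top subquotient, either its dimension is strictly less than $D(M)$ (and $\deg(M) = \deg(M^{N-1})$) or it equals $D(M)$ (and $\deg(M) = \deg(M^{N-1}) + \deg(R/\p_N)$).

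Finally, I would identify the counting multiplicity $n_\p(M^\bullet)$ for $\p \in \mathcal{D}(M)$. Since each such $\p$ is a minimal prime for $M$ (it lies in $\ast\mathrm{Supp}_R(M)$ and by maximality of $\sdim_R(R/\p)$ within $\ast V(M)$ cannot strictly contain another such prime), the last sentence of Lemma~\ref{corollary the filtration graded} applies: the count is independent of the choice of filtration and equals $\ell_{R_\p}(M_\p)$. By Theorem~\ref{theorem *composition series exists for Mgrp when p is minimal} this is the same as $\ast\ell_{R_{[\p]}}(M_{[\p]})$, yielding the stated formula.

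The only mildly delicate step is verifying that the minimal primes of $M$ of maximal $\sdim_R(R/\p)$ are exactly the $\p_i$ appearing in $\mathcal{D}(M)$ -- i.e.\ that no $\p_i$ in the filtration which properly contains a minimal prime $\q$ of $M$ can still have $\sdim_R(R/\p_i) = D(M)$. This follows from the fact that a strict inclusion $\q \subsetneq \p_i$ of graded primes, combined with $\sdim_R(R/\q) \leq D(M)$, forces $\sdim_R(R/\p_i) < \sdim_R(R/\q) \leq D(M)$, so such $\p_i$ contribute zero to the sum regardless. Everything else is bookkeeping against lemmas already in hand.
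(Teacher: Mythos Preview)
Your proposal is correct and follows essentially the same route as the paper's proof: choose a filtration as in Lemma~\ref{corollary the filtration graded}, use the additivity lemma immediately preceding the theorem so that only subquotients of maximal $\sdim$ survive, and identify the count $n_{\p}(M^\bullet)$ with $*\ell_{R_{[\p]}}(M_{[\p]})$ via Theorem~\ref{theorem *composition series exists for Mgrp when p is minimal}. Your write-up simply makes explicit the ``delicate step'' (that a $\p_i$ with $\sdim_R(R/\p_i)=D(M)$ must be minimal for $M$) which the paper leaves implicit in its definition of $\mathcal{D}(M)$.
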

\begin{proof} Choose a graded filtration $M^{\bullet}$ of $M$ of the form in Lemma \ref{corollary the filtration graded}  We know that if $\p \in \mathcal{D}(M)$, then the graded $R$-module $R/\p$, possibly shifted, occurs exactly $*\ell_{R_{[\p]}}(M_{[\p]}) = \ell_{R_{\p}}(M_{\p})$ times (using Theorem \ref{theorem *composition series exists for Mgrp when p is minimal}) as a subquotient in the filtration.  The lemma above then gives the result.
\end{proof}

We want to compare degree to our previously studied multiplicities.

Letting $\bar{x}$ be  a GSOP for $M \in \grmod(R)$, we've seen that $k\langle \bar{x} \rangle$ is a graded polynomial ring, and one directly calculates that
$$P_{k\langle \bar{x} \rangle}(t) = \frac{1}{\prod_{i=1}^{D(M)} (1-t^{d_i})},$$ where $d_i$ is the degree of the homogeneous element $x_i$.

Now,  using $M \in \grmod(k\langle \bar{x} \rangle)$, and recalling that $P_M(t)$ is the same whether we consider $M \in \grmod(R)$ or  $M \in \grmod(k \langle \bar{x} \rangle)$, Lemma \ref{corollary Euler-Poincare series props} gives that
$$P_k(t)P_M(t) = P_{k \langle \bar{x} \rangle}(t) \chi_{k \langle \bar{x} \rangle}(M,k)(t).$$  Also, $\chi_{k\langle \bar{x} \rangle}(M,k)(t) \in \mathbb{Z}[t,t^{-1}]$.  Since $P_k(t) = 1$, we have

\begin{theorem} If $M \in \grmod(R)$ and $\bar{x}$ is a GSOP for $M$, then
$$P_M(t) = \frac{\chi_{k\langle \bar{x} \rangle}(M,k)(t)}{\prod_{i=1}^{D(M)} (1-t^{d_i})},$$
with $\chi_{k\langle \bar{x} \rangle}(M,k)(t) \in \mathbb{Z}[t,t^{-1}].$
\end{theorem}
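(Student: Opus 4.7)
The plan is to apply Lemma~\ref{corollary Euler-Poincare series props} to the graded polynomial subring $B := k\langle \bar{x} \rangle \subseteq R$, with modules $M$ and $k$, and then compute the Poincar\'{e} series of $B$ explicitly. Everything needed is already set up in the preceding discussion, so the proof amounts to stitching together three ingredients.

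First I would set up the change of rings. Proposition~\ref{proposition algebraic independence} tells us that $x_1, \ldots, x_{D(M)}$ are algebraically independent over $k$, so $B$ is a graded polynomial ring on these variables (with $\deg x_i = d_i$). Lemma~\ref{lemma finite generation gsop} gives $M \in \grmod(B)$. The bullet points listed just before the Hilbert-Serre theorem note that $P_M(t)$ is the same whether $M$ is regarded as an $R$-module or a $B$-module, since in both cases it is computed using $\vdim_k$ of the graded components $M_i$.

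Next, I would compute $P_B(t)$. Since $B \cong k[x_1] \otimes_k \cdots \otimes_k k[x_{D(M)}]$ as graded $k$-algebras, and tensor product over $k$ multiplies Poincar\'{e} series (again from the bullet points), and since $P_{k[x_i]}(t) = 1/(1-t^{d_i})$, we obtain
$$P_B(t) = \prod_{i=1}^{D(M)} \frac{1}{1-t^{d_i}}.$$
Now applying Lemma~\ref{corollary Euler-Poincare series props}(b) over the ring $B$ with modules $M$ and $k$ gives $P_B(t)\,\chi_B(M,k)(t) = P_M(t)\,P_k(t)$. Since $k = B_0$ is concentrated in degree zero, $P_k(t) = 1$. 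Substituting and rearranging produces the displayed formula.

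Finally, the claim that $\chi_{k\langle \bar{x} \rangle}(M,k)(t) \in \mathbb{Z}[t, t^{-1}]$ is exactly what was proved in Lemma~\ref{star chi equals chi}, via the fact that the Koszul complex $\mathbf{K}^B(\bar{x}, k)$ is a bounded free graded resolution of $k$ over $B$, so $\mathrm{Tor}_i^B(M,k)$ vanishes for $i > D(M)$ and the alternating sum of $P_{\mathrm{Tor}_i^B(M,k)}(t)$ is a finite sum of Laurent polynomials. There is no real obstacle here; the only mildly subtle point is the invariance of $P_M(t)$ under the change of ring $B \subseteq R$, but that was addressed in the preliminary bullet list. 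In effect, this theorem is essentially a repackaging of Lemma~\ref{star chi equals chi} in the form that cleanly displays $P_M(t)$ as a rational function with the denominator dictated by the degrees of a GSOP.
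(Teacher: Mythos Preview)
Your proposal is correct and follows essentially the same approach as the paper: pass to the polynomial subring $B = k\langle \bar{x}\rangle$, compute $P_B(t) = \prod_i (1-t^{d_i})^{-1}$, apply Lemma~\ref{corollary Euler-Poincare series props} with $P_k(t)=1$, and cite Lemma~\ref{star chi equals chi} for the Laurent polynomial claim. The paper gives exactly this argument in the paragraphs immediately preceding the theorem statement.
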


Since
$$(1-t)^{D(M)}P_M(t) = \frac{\chi_{k\langle \bar{x} \rangle}(M,k)(t) }{\prod_{i=1}^{D(M)}(1+t + \cdots + t^{d_i-1})},$$  taking the limit as $t$ approaches 1, and using Lemma \ref{star chi equals chi}, yields:

\begin{theorem}  \label{theorem main theorem} If $M \neq 0$ is in $\grmod(R)$, and $x_1, \ldots , x_{D(M)}$ of degrees $d_1, \ldots, d_{D(M)}$ form a GSOP for $M$, generating the graded ideal $\I$ of $R$, then
$$\deg(M) = \frac{*e_R(M, \I, D(M))}{d_1 \cdots d_{D(M)}} = \frac{*\chi^R(\bar{x}, M)}{d_1 \cdots d_{D(M)}}.$$

Thus, the ratio $$\frac{*e_R(M, \I, D(M))}{d_1 \cdots d_{D(M)}}$$ is independent of the choice of system of parameters $x_1, \ldots, x_{D(M)}$ for $M$.

\end{theorem}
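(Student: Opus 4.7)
The plan is to read off the theorem essentially directly from the identity
$$(1-t)^{D(M)}P_M(t) = \frac{\chi_{k\langle \bar{x} \rangle}(M,k)(t)}{\prod_{i=1}^{D(M)}(1+t + \cdots + t^{d_i-1})}$$
obtained just before the statement. By definition $\deg(M) = \lim_{t \to 1}(1-t)^{D(M)}P_M(t)$, so I will show that the right-hand side has a well-defined value at $t=1$ and compute it. The denominator presents no difficulty: each factor $1+t+\cdots+t^{d_i-1}$ evaluates to $d_i$ at $t=1$, giving the product $d_1 \cdots d_{D(M)}$.

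The numerator requires slightly more care. First I would invoke Lemma~\ref{star chi equals chi}, which asserts that $\chi_{k\langle \bar{x} \rangle}(M,k)(t)$ is genuinely a Laurent polynomial in $\mathbb{Z}[t,t^{-1}]$ (the key input is that $\mathbf{K}^{k\langle \bar{x} \rangle}(\bar{x},k)$ is a finite free resolution of $k$, so only finitely many Tor's are nonzero, and each contributes a Laurent polynomial by Lemma~\ref{corollary Euler-Poincare series props}). Hence evaluation at $t=1$ is legitimate, and Lemma~\ref{star chi equals chi} identifies that value with $*\chi^{k\langle \bar{x} \rangle}(\bar{x}, M)$. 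Applying the chain of equalities in that same lemma, namely
$$\chi_{k\langle \bar{x} \rangle}(M,k)(1) = *\chi^{k\langle \bar{x} \rangle}(\bar{x}, M) = *e_R(M, \I, D(M)) = *\chi^R(\bar{x}, M),$$
and dividing by $d_1 \cdots d_{D(M)}$ yields the asserted formula.

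Finally, the independence statement is immediate, because $\deg(M)$ was defined using only the Poincar\'{e} series of $M$ and makes no reference to $\bar{x}$; so the ratio $*e_R(M,\I,D(M))/(d_1 \cdots d_{D(M)})$ must take the same value $\deg(M)$ for every GSOP.

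There is no real obstacle here: all the hard work has already been done in Theorem~\ref{theorem koszul equals samuel} (Koszul equals Samuel multiplicity), Lemma~\ref{star chi equals chi} (the passage between $R$ and the polynomial subring $k\langle \bar{x}\rangle$), and the Hilbert--Serre form of $P_M(t)$. The only thing worth emphasizing in writing is the justification that the Laurent series $\chi_{k\langle \bar{x}\rangle}(M,k)(t)$ is actually a Laurent polynomial, so that limit and evaluation at $t=1$ coincide — this is exactly what lets us pass from the identity for $P_M(t)$ to a clean evaluation producing $\deg(M)$.
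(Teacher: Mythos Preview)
Your proposal is correct and follows essentially the same approach as the paper: the paper's proof is precisely to take the displayed identity $(1-t)^{D(M)}P_M(t) = \chi_{k\langle \bar{x} \rangle}(M,k)(t)/\prod_i(1+t+\cdots+t^{d_i-1})$, pass to the limit $t\to 1$, and invoke Lemma~\ref{star chi equals chi} to identify the numerator's value at $1$ with $*e_R(M,\I,D(M)) = *\chi^R(\bar{x},M)$. Your added remark that independence follows because $\deg(M)$ is defined without reference to $\bar{x}$ is exactly the intended reading of the ``Thus'' clause.
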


Note that we can delete the ``stars"  in the equalities of the above theorem and retain the equalities, using Theorem \ref{theorem star e equals e, pos graded}.  The reader should compare this result to Proposition 5.2.2 of \cite{pr}, which states a similar result for rings with standard grading.

\end{document}